\newtheorem{thm}{Theorem}[section]
\newtheorem*{thm*}{Theorem}
\newtheorem{cor}[thm]{Corollary}
\newtheorem*{cor*}{Corollary}
\newtheorem{lemma}[thm]{Lemma}
\newtheorem{prop}[thm]{Proposition}
\theoremstyle{definition}
\newtheorem{defn}[thm]{Definition}
\theoremstyle{remark}
\newtheorem{rem}[thm]{Remark}
\newcommand{\Ann}[1]{{#1}^0}
\newcommand{\hook}{\lrcorner \,}
\newcommand{\ZZ}{{\mathbb  Z}}
\newcommand{\RR}{{\mathbb  R}}
\newcommand{\HH}{{\mathbb  H}}
\newcommand{\CC}{{\mathbb  C}}
\newcommand{\NN}{{\mathbb  N}}
\newcommand{\G}{\Gamma}
\newcommand{\su}{{\mathfrak{su}}} 
\newcommand{\msl}{{\mathfrak{sl}}} 
\newcommand{\SU}{{\mathrm{SU}}}
\renewcommand{\G}{{\mathrm{G}}}
\newcommand{\mfg}{\mathfrak{g}}
\newcommand{\mfh}{\mathfrak{h}}
\newcommand{\mfu}{\mathfrak{u}}
\newcommand{\mfp}{\mathfrak{p}}
\newcommand{\ad}{\operatorname{ad}}
\newcommand{\id}{\operatorname{id}}
\newcommand{\SO}{\operatorname{SO}}
\newcommand{\GL}{\operatorname{GL}}
\newcommand{\SL}{\operatorname{SL}}
\newcommand{\Spin}{\operatorname{Spin}}
\newcommand{\spa}[1]{\mathrm{span}(#1)}
\def\bi{\begin{enumerate}}
\def\ei{\end{enumerate}}
\numberwithin{equation}{section}
\begin{document}

\title{On the boundary behaviour of left-invariant Hitchin and hypo flows}

\author{Florin Belgun}
\author{Vicente Cort\'es}
\author{Marco Freibert}
\author{Oliver Goertsches}
\address{Florin Belgun, Fachbereich Mathematik, Universit\"at Hamburg, Bundesstra\ss e 55, 20146 Hamburg, Germany}
\address{Vicente Cort\'es, 
Fachbereich Mathematik und Zentrum f\"ur Mathematische Physik, 
Universit\"at Hamburg, Bundesstra\ss e 55, 20146 Hamburg, Germany}
\address{Marco Freibert, 
Department of Mathematics, Aarhus University, Ny Munkegade 118, Bldg 1530, DK-8000 Aarhus C, Denmark}
\address{Oliver Goertsches, 
Fachbereich Mathematik, Universit\"at Hamburg, Bundesstra\ss e 55, 20146 Hamburg, Germany}

\date{}

%\subjclass[2010]{53D35, 53D20, 55N25}

\begin{abstract}
We investigate left-invariant Hitchin and hypo flows on $5$-, $6$- and $7$-dimensional Lie groups. They provide Riemannian cohomogeneity-one manifolds of one dimension higher with holonomy contained in $\SU(3)$, $\G_2$ and $\Spin(7)$, respectively, which are in general geodesically incomplete. Generalizing results of Conti, we prove that for large classes of solvable Lie groups $G$ these manifolds cannot be completed: a complete Riemannian manifold with parallel $\SU(3)$-, $\G_2$- or $\Spin(7)$-structure which is of cohomogeneity one with respect to $G$ is flat, and has no singular orbits.

We furthermore classify, on the non-compact Lie group $\SL(2,\CC)$, all half-flat $\SU(3)$-structures which are bi-invariant with respect to the maximal compact subgroup $\SU(2)$ and solve the Hitchin flow for these initial values. It turns out that often the flow collapses to a smooth manifold in one direction. In this way we recover an incomplete cohomogeneity-one Riemannian metric with holonomy equal to $\G_2$ on the twisted product $\SL(2,\CC)\times_{\SU(2)} \CC^2$ described by Bryant and Salamon.\end{abstract}

\maketitle
\section{Introduction}
The Hitchin flow \cite{Hitchin} starts with a half-flat
$\SU(3)$-structure on a $6$-dimensional or a cocalibrated
$\G_2$-structure on a $7$-dimensional manifold $M$, and constructs
from this initial data a parallel $\G_2$- or $\Spin(7)$-structure,
respectively, on the product $M\times I$ of $M$ with an interval
$I$. An analogue of this flow for hypo $\SU(2)$-structures was
introduced in \cite{CS}, resulting in a parallel
$\SU(3)$-structure. If the interval on which the flow is defined is
not the whole real line, then the resulting Riemannian manifold is
geodesically incomplete. One would like to find conditions under which
the flow degenerates in a controlled way at the boundaries of the
interval, in order to obtain a natural metric completion, which then
still carries the same geometry. A natural simplifying assumption in this context is to require homogeneity of the initial data, so that $M\times I$ is of cohomogeneity one with only regular orbits. Under mild assumptions, the cohomogeneity-one action extends automatically to any potential completion of $M\times I$, see Proposition \ref{prop:extensionprop}.

For initial data which is homogeneous under a compact Lie group these flows and the above extension problem were studied extensively in the literature, see, e.g., \cite{Brand}, \cite{CleytonSwann}, \cite{Madsen}, \cite{Reidegeld}, \cite{Reidegeld2}. In this paper we focus on the case of left-invariant initial data on a possibly non-compact Lie group, a setting which was previously considered in \cite{Chong}, \cite{Conti2} and \cite{CLSS}. It was shown by Conti \cite[Section 8]{Conti2} that this problem has only trivial solutions for the hypo flow on nilpotent Lie groups, in the sense that the resulting manifold is automatically flat. His proof uses his classification of hypo $\SU(2)$-structures on $5$-dimensional nilpotent Lie groups. We find a new, conceptional proof of his statement, which works more generally for arbitrary split solvable Lie groups $G$, see Theorem \ref{thm:flatness} (a). With analogous arguments, we are able to show that also the Hitchin flow on certain classes of six- and seven-dimensional split-solvable Lie groups $G$ can only be extended trivially, see part (b) and (c) of the same theorem. To prove this statement, we first show in Section \ref{sec:singorbits}, as Conti did for the hypo flow on nilpotent Lie groups, that in all considered cases one cannot extend $G\times I$ to a, not necessarily complete, Riemannian manifold of cohomogeneity one with respect to $G$ with one or more singular orbits.

Examples of Lie groups for which the hypo or Hitchin flow extends non-trivially are rarely known. The by far most studied case is $S^3\times S^3$ on which the solution of the extension problem led to several complete Riemannian manifolds, cf., e.g., \cite{Brand}, \cite{Chong}, \cite{Madsen}. Note that even the first example of a complete Riemannian manifold with holonomy equal to $\G_2$ on the spin bundle over $S^3$ \cite{BryantSalamon} is an extension in the above sense of the Riemannian manifold obtained by a Hitchin flow with left-invariant initial $\SU(3)$-structure \cite{Hitchin}. In this case, the initial value is biinvariant under the diagonal $\SU(2)$, which simplifies the flow equations greatly.

To obtain a similar example on a non-compact Lie group, we consider the Lie group $\SL(2,\CC)$, whose Lie algebra has the same complexification as $\mathfrak{su}(2)\oplus \mathfrak{su}(2)$, and impose invariance of the initial value under the maximal compact subgroup $\SU(2)$. We classify in Section \ref{sec:example} all $\SU(2)$-invariant left-invariant half-flat $\SU(3)$-structures on $\SL(2,\CC)$ and solve the Hitchin flow explicitly for all these initial values. The initial values depend on three continuous and one discrete parameter and the solutions of the Hitchin flow are defined on a finite interval $(a,b)$ for all possible parameters. Whereas at one boundary point the solution always collapses in a bad way such that one cannot extend the Riemannian manifold in that direction, the degeneration at the other end behaves nicely precisely when one specific continous parameter vanishes. In these cases, we can extend the corresponding Riemannian manifolds in one direction and obtain incomplete cohomogeneity-one Riemannian metrics on the twisted product $\SL(2,\CC)\times_{\SU(2)} \CC^2$ which all turn out to be homothetic to a metric with holonomy equal to $\G_2$ on the spin bundle over three-dimensional hyperbolic space described by Bryant and Salamon \cite[Section 3]{BryantSalamon}.\\

{\em Acknowledgements:} We are grateful to Frank Reidegeld for helpful discussions. This work was supported by the Collaborative Research Center SFB 676 ``Particles, Strings, and the Early Universe'' of the Deutsche Forschungsgemeinschaft. The third author was also partly supported by Det Frie Forskningsr\aa d through the DFF-Individual postdoctoral grant DFF -- 4002-00125.

\section{Flow equations and special holonomy}
In this section we give a brief overview of $\SU(2)$-, $\SU(3)$-, $\G_2$- and $\Spin(7)$-structures in dimension five, six, seven and eight, respectively, and their relation to the special holonomy groups $\SU(3)$, $\G_2$ and $\Spin(7)$ in six, seven and eight dimensions, respectively, via certain flow equations. For more details on \linebreak $\SU(3)$-, $\G_2$- and $\Spin(7)$-structures and proofs of the mentioned facts, the reader may consult, e.g., \cite{CLSS}, \cite{Hitchin} and \cite{Hitchin2}. For $\SU(2)$-structures, the main references are \cite{Conti2} and \cite{CS}. Note that in \cite{Stock} a unified treatment of all cases is given.

We begin with the definition of the mentioned $G$-structures:
\begin{defn}
\begin{itemize}
\item
An \emph{$\SU(2)$-structure} on a five-dimensional manifold $M$ is a quadruple $(\alpha,\omega_1,\omega_2,\omega_3)\in \Omega^1 M\times (\Omega^2 M)^3$ for which at each point $p\in M$ there exists an ordered basis $(e_1,\ldots,e_5)$ of $T_p M$ with
\begin{equation*}
\alpha_p=e^5,\quad (\omega_1)_p=e^{12}+e^{34},\quad (\omega_2)_p=e^{13}-e^{24},\quad (\omega_3)_p=e^{14}+e^{23}.
\end{equation*}
  The automorphism group of the above defined structure (i.e., the
  group of transformations preserving the forms $\alpha, \omega_1,
  \omega_2,  \omega_3$) is $\SU(2)\subset \SO(5)$.
$(\alpha,\omega_1,\omega_2,\omega_3)$ is called \emph{hypo} if
\begin{equation*}
d\;\!\omega_1=0,\quad d(\alpha\wedge \omega_2)=0,\quad d(\alpha\wedge \omega_3)=0.
\end{equation*}
\item
An \emph{$\SU(3)$-structure} on a six-dimensional manifold $M$ is a pair $(\omega,\rho)\in \Omega^2 M\times \Omega^3 M$ for which at each point $p\in M$ there exists an ordered basis $(e_1,\ldots,e_6)$ of $T_p M$ with
\begin{equation*}
\omega_p=e^{12}+e^{34}+e^{56},\quad \rho_p=e^{135}-e^{146}-e^{236}-e^{245}.
\end{equation*}
  The automorphism group of this structure (i.e., the
  group of transformations preserving the forms $\omega,\rho$) is $\SU(3)\subset \SO(6)$.
$(\omega,\rho)$ is called \emph{half-flat} if
\begin{equation*}
d\left(\omega^2\right)=0,\quad d\:\!\rho=0.
\end{equation*}
\item
A \emph{$\G_2$-structure} on a seven-dimensional manifold $M$ is a three-form $\varphi\in \Omega^3 M$ for which at each point $p\in M$ there exists an ordered basis $(e_1,\ldots,e_7)$ of $T_p M$ with
\begin{equation*}
\varphi_p=e^{127}+e^{347}+e^{567}+e^{135}-e^{146}-e^{236}-e^{245}.
\end{equation*}
 The automorphism group of the structure (i.e., the
  group of transformations preserving the form $\varphi$) is
  $\G_2\subset \SO(7)$.
\item
A \emph{$\Spin(7)$-structure} on an eight-dimensional manifold $M$ is a four-form $\Phi\in \Omega^4 M$ for which at each point $p\in M$ there exists an ordered basis $(e_1,\ldots,e_8)$ of $T_p M$ with
\begin{equation*}
\begin{split}
\Phi_p=\ &e^{1278}+e^{3478}+e^{5678}+e^{1358}-e^{1468}-e^{2368}-e^{2458}\\
&+ e^{1234}+e^{1256}+e^{3456}+e^{1367}+e^{1457}+e^{2357}-e^{2467}.
\end{split}
\end{equation*}
 The automorphism group of the structure (i.e., the
  group of transformations preserving the form $\Phi$) is
  $\Spin(7)\subset \SO(8)$.
\item
An ordered basis $(e_1,\ldots,e_n)$ of $T_p M$ as in the definition of the different $G$-structures above is called an \emph{adapted} basis. As $\SU(2)\subseteq \SO(5)$, $\SU(3)\subseteq \SO(6)$, $\G_2\subseteq \SO(7)$ and $\Spin(7)\subseteq \SO(8)$, each of the above considered $G$-structures induces a Riemannian metric $g$ and an orientation and so also a Hodge star operator. The Riemannian metric and the orientation are defined by the property that any adapted basis is an oriented orthonormal basis. The $G$-structure is called \emph{parallel} if the defining differential forms are parallel with respect to the associated Levi-Civita connection $\nabla^g$. Note that the holonomy of $g$ is then contained in $G$.
\item
For a $\G_2$-structure $\varphi$ one gets
\begin{equation}\label{eq:Hodgedual}
(\star_{\varphi}\varphi)_p=e^{1234}+e^{1256}+e^{3456}+e^{1367}+e^{1457}+e^{2357}-e^{2467}.
\end{equation}
for any adapted basis $(e_1,\ldots,e_7)$ at $p\in M$. We call $\varphi$ \emph{cocalibrated} if 
\begin{equation*}
d\star_{\varphi}\varphi=0.
\end{equation*}
\end{itemize}
\end{defn}
It is possible to give an ``intrinsic'' definition of all these $G$-structures, i.e., a definition without referring to an adapted basis but instead through certain properties the defining differential forms have to fulfill. We elaborate this in more detail only in the $\SU(3)$-case since we will not need the intrinsic definition in the other cases. For this, we first need some prepatory definitions.
\begin{defn}
Let $\rho\in \Lambda^3 V^*$ be a three-form on an oriented
 six-dimensional vector space $V$. Define a linear map $K_{\rho}:V\rightarrow V\otimes \Lambda^6 V^*$ by

\begin{equation}\label{eq:Krho}
K_{\rho}(v)=\kappa(v\hook \rho\wedge \rho)
\end{equation}

for all $v\in V$, where $\kappa: \Lambda^5 V^*\rightarrow  V \otimes\Lambda^6 V^*$ is the natural $\GL(V)$-equivariant isomorphism. Set now 
\begin{equation}\label{eq:lambda}
\lambda(\rho):=\frac{1}{6}\mathrm{tr}((K_{\rho}\otimes \id_{\Lambda^6 V^*})\circ K_{\rho} )\in \left(\Lambda^6 V^*\right)^{\otimes 2}.
\end{equation}

If $\lambda(\rho)<0$, we define  the square root of $-\lambda(\rho)$ as
  the unique positive (for the given orientation) 6-form on $V$ that
  squares to $-\lambda(\rho)\in(\Lambda^6 V^*)^{\otimes 2}$ and the
endomorphism $J_{\rho}$ of $V$ by 
\begin{equation}\label{eq:Jrho}
K_{\rho}= J_{\rho}\otimes \sqrt{-\lambda(\rho)}
\end{equation}
It is well-known that $J_{\rho}$ is a complex structure on the vector space $V$ \cite{Hitchin2}. Furthermore, we set

\begin{equation}\label{eq:rhohat}
\hat{\rho}:=J_{\rho}^*\rho\in \Lambda^3 V^*
\end{equation}

and
\begin{equation}\label{eq:complexvolume}
\Psi:=\rho+i\hat{\rho}\in \Lambda^3 V^*\otimes \CC.
\end{equation}
\end{defn}
One obtains now the following characterization of $\SU(3)$-structures.
\begin{lemma}\label{lem:characterizationSU3}
Let $M$ be a six-dimensional manifold. Then a pair $(\omega,\rho)$ of
a two-form $\omega$ and a three-form $\rho$ is an $\SU(3)$-structure
if and only if 
\bi
\item[(a)] $\omega$ is non-degenerate (consider then the orientation of
  $TM$ given by the volume form
  $\omega^3$), 
\item[(b)] $\lambda(\rho)<0$, 
\item[(c)] $\omega\wedge \rho=0$,
\item[(d)] $\sqrt{-\lambda(\rho)}=\frac{\omega^3}{3}$ and 
\item[(e)] $g_{(\omega,\rho)}:=\omega(J_{\rho}\cdot,\cdot)$ is a
  Riemannian metric.
\ei
If this is the case, then $(g,J_{\rho},\omega)$ is an almost Hermitian structure and $\Psi$ is a \emph{complex volume form}, i.e. a non-zero complex $(3,0)$-form, on $M$.
\end{lemma}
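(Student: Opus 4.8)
For the implication and the concluding statement I would work pointwise: fix an oriented six-dimensional real vector space $V$, a two-form $\omega\in\Lambda^2V^*$ and a three-form $\rho\in\Lambda^3V^*$, and prove the equivalence there; the assertions about $g_{(\omega,\rho)}$, $J_{\rho}$ and $\Psi$ will then drop out of the argument. I would treat as known, from the linear algebra of stable three-forms (see \cite{Hitchin2}), that when $\lambda(\rho)<0$ the endomorphism $J_{\rho}$ is a complex structure, that $\Psi=\rho+i\hat{\rho}$ is a \emph{nonzero} form of type $(3,0)$ with respect to $J_{\rho}$ with $\operatorname{Re}\Psi=\rho$, and that, $\Lambda^{3,0}V^*$ being one-dimensional, $\Psi$ is automatically decomposable: $\Psi=\mu\,\zeta^1\wedge\zeta^2\wedge\zeta^3$ for a complex $(1,0)$-coframe $(\zeta^1,\zeta^2,\zeta^3)$ and some $\mu\in\CC^*$.

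For the implication ``$(\omega,\rho)$ is an $\SU(3)$-structure $\Rightarrow$ (a)--(e)'' I would note that all of (a)--(e) are preserved under the natural $\GL(V)$-action, with the orientation always taken to be the one induced by $\omega^3$; hence it suffices to verify them for the model pair $(\omega_0,\rho_0)=(e^{12}+e^{34}+e^{56},\ e^{135}-e^{146}-e^{236}-e^{245})$ associated with an adapted basis. This is a finite computation: $\omega_0$ is non-degenerate with $\omega_0^3=6\,e^{123456}$; $\omega_0\wedge\rho_0=0$ because every monomial of $\rho_0$ shares an index with every monomial of $\omega_0$; and an explicit evaluation of $K_{\rho_0}$ yields $J_{\rho_0}$, a compatible $(1,0)$-coframe $\zeta^j$, the form $\hat{\rho}_0$, the inequality $\lambda(\rho_0)<0$, and the identity $\sqrt{-\lambda(\rho_0)}=\tfrac13\omega_0^3$; finally $g_{(\omega_0,\rho_0)}=\omega_0(J_{\rho_0}\cdot,\cdot)=\sum_j(e^j)^2$ is Euclidean. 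The same computation exhibits $(g_{(\omega_0,\rho_0)},J_{\rho_0},\omega_0)$ as an almost Hermitian structure with complex volume form $\zeta^1\wedge\zeta^2\wedge\zeta^3$, which by equivariance also settles the concluding assertion of the lemma.

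For the converse I would assume (a)--(e). By (b) the recalled facts give $J_{\rho}$ and $\Psi$. The first genuine step is that (c) already forces $\omega$ to be of type $(1,1)$ for $J_{\rho}$: writing $\omega=\omega^{2,0}+\omega^{1,1}+\omega^{0,2}$ and using $\Lambda^{5,0}V^*=\Lambda^{4,1}V^*=0$, one gets $\omega\wedge\Psi=\omega^{0,2}\wedge\Psi\in\Lambda^{3,2}V^*$; since $\omega$ is real, $\omega\wedge\rho=\tfrac12(\omega^{0,2}\wedge\Psi+\overline{\omega^{0,2}\wedge\Psi})$ is a $(3,2)$-form plus its conjugate $(2,3)$-form, so $\omega\wedge\rho=0$ forces $\omega^{0,2}\wedge\Psi=0$; as $\eta\mapsto\eta\wedge\Psi$ is injective on $\Lambda^{0,2}V^*$ (because $\Psi$ is a nonzero decomposable $(3,0)$-form), $\omega^{0,2}=0$, hence $\omega$ is $(1,1)$ by reality. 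Then $g_0:=\omega(J_{\rho}\cdot,\cdot)$ is symmetric, and by (a) and (e) it is a positive definite metric, so $(g_0,J_{\rho},\omega)$ is almost Hermitian. Choosing an orthonormal basis $(e_1,\dots,e_6)$ adapted to this structure, so that $\omega=\omega_0$ and $(V^*)^{1,0}$ is spanned by $\zeta^j:=e^{2j-1}\mp ie^{2j}$, I may write $\Psi=\mu\,\zeta^1\wedge\zeta^2\wedge\zeta^3$; a rotation in the $(e_1,e_2)$-plane, which preserves $g_0$, $J_{\rho}$ and $\omega$, changes $\mu$ by a unit complex factor, so I may take $\mu=r>0$, and then $\rho=\operatorname{Re}\Psi=r\,\rho_0$ in this basis. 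Finally I invoke (d): since $\lambda$ is homogeneous of degree four and the square root is positive for the fixed orientation, $\sqrt{-\lambda(\rho)}=r^2\sqrt{-\lambda(\rho_0)}=r^2\,\tfrac13\omega_0^3$ by the model computation, while $\tfrac13\omega^3=\tfrac13\omega_0^3$; hence $r=1$, so $(\omega,\rho)=(\omega_0,\rho_0)$ in the basis $(e_1,\dots,e_6)$, which is therefore adapted. The concluding statement is already contained in this construction: $(g_{(\omega,\rho)},J_{\rho},\omega)=(g_0,J_{\rho},\omega)$ is almost Hermitian and $\Psi$ is a complex volume form.

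The one genuinely non-routine point is the ``$(1,1)$-claim'': everything else is either a citation to Hitchin's stable-form dictionary or a single explicit computation on the model, but deducing that $\omega$ is of pure type from the single equation $\omega\wedge\rho=0$ is exactly where the special dimension is exploited, through $\Lambda^{5,0}V^*=\Lambda^{4,1}V^*=0$ and the injectivity of $\eta\mapsto\eta\wedge\Psi$ on $\Lambda^{0,2}V^*$. A secondary thing to watch is the normalizing constant in (d); rather than computing $\lambda(\rho_0)$ from its definition I would, as above, simply read the identity $\sqrt{-\lambda(\rho_0)}=\tfrac13\omega_0^3$ off the model and feed it into the scaling argument.
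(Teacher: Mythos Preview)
The paper does not actually prove this lemma; it is stated as a known characterization (essentially a repackaging of Hitchin's linear algebra of stable three-forms together with the standard normalization), and the only justification offered is the subsequent remark that condition (c) makes $g_{(\omega,\rho)}$ symmetric and condition (a) makes it nondegenerate. Your argument is correct and supplies precisely what the paper leaves implicit.

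The step you single out as the non-routine one --- that $\omega\wedge\rho=0$ forces $\omega$ to be of type $(1,1)$ for $J_\rho$, via the vanishing of $\Lambda^{5,0}$ and $\Lambda^{4,1}$ in complex dimension three and the injectivity of $\eta\mapsto\eta\wedge\Psi$ on $\Lambda^{0,2}$ --- is indeed the one substantive ingredient beyond Hitchin's dictionary, and your treatment of it is clean. The remainder (reducing to the model via a unitary frame for the almost Hermitian structure $(g_0,J_\rho,\omega)$, absorbing the phase of $\mu$ by a rotation in a single coordinate plane, and then pinning down the modulus $r$ via the degree-four homogeneity of $\lambda$ together with (d)) is routine and correctly handled. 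One minor point of tidiness: your ``$\zeta^j:=e^{2j-1}\mp ie^{2j}$'' leaves the sign convention ambiguous; it is fixed by the model computation of $J_{\rho_0}$ with respect to the orientation $\omega_0^3$, so you should state that convention once and stick to it.
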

\begin{rem}
We would like to note that $\SU(3)$-structures in our sense are sometimes called \emph{normalized} $\SU(3)$-structures, cf., e.g., \cite{CLSS}. A (non-normalized) $\SU(3)$-structure is then a pair $(\omega,\rho)\in \Omega^2 M\times \Omega^3 M$ fulfilling the conditions of Lemma \ref{lem:characterizationSU3} but instead of $\sqrt{-\lambda(\rho)}=\frac{\omega^3}{3}$, only $\sqrt{-\lambda(\rho)}=c\frac{\omega^3}{3}$ for some non-zero constant $c\in\RR$.
\end{rem}

\begin{rem} $\lambda(\rho)$ is an element of an oriented
  representation of $\GL(V)$, hence the sign of $\lambda(\rho)$ (and thus
  the condition (b) above) is
  independent of the chosen orientation of $V$. Moreover, note that
  the condition (c) implies that the tensor
  $g_{(\omega,\rho)}$ in the Lemma above is symmetric (and
  non-degenerate because of condition (a)).
\end{rem}

Next, we would like to note some algebraic properties of $\SU(2)$-, $\SU(3)$- and $\G_2$-structures which we will need in Section \ref{sec:singorbits}.
\begin{lemma}\label{lem:algprop}
\begin{enumerate}
\item[(a)]
Let $(\alpha,\omega_1,\omega_2,\omega_3)$ be an $\SU(2)$-structure on a five-dimensio\-nal manifold $M$. Then the kernels of the forms $\omega_1$, $\omega_2$ and $\omega_3$ at each point $p\in M$ are equal and one-dimensional. Moreover, $\alpha$ is non-zero when restricted to this common kernel.
\item[(b)]
Let $(\omega,\rho)$ be an $\SU(3)$-structure on a six-dimensional manifold $M$. If at some point $p\in M$ there are tangent vectors $X$ and $Y$ with $\rho_p(X,Y,\cdot)=0$, then $X$ and $Y$ are $(J_{\rho})_p$-complex linearly dependent and $\hat{\rho}_p(X,Y,\cdot)=0$.
\item[(c)]
Let $\varphi$ be a $\G_2$-structure on a seven-dimensional manifold $M$. For all $p\in M$ and all linearly independent $X,\, Y\in T_p M$, the kernel of the two-form $(\star_{\varphi} \varphi)_p(X,Y,\cdot,\cdot)$ is three-dimensional.
\end{enumerate}
\end{lemma}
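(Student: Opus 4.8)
The plan is to verify each of the three statements by passing to an adapted basis at the point $p$ and performing the relevant linear algebra there; since all quantities involved ($\omega_i$, $\rho$, $\hat\rho$, $\star_\varphi\varphi$, $J_\rho$) are preserved by the structure group, it suffices to check the assertion for the standard model forms in $\RR^5$, $\RR^6$, $\RR^7$. For part (a), I would fix an adapted basis $(e_1,\dots,e_5)$ with $\omega_1=e^{12}+e^{34}$, $\omega_2=e^{13}-e^{24}$, $\omega_3=e^{14}+e^{23}$ and $\alpha=e^5$. Each $\omega_i$ is then the standard symplectic form on $\spa{e_1,\dots,e_4}$ viewed as a degenerate $2$-form on $\RR^5$; a direct computation of $e_k\hook\omega_i$ shows the kernel of each $\omega_i$ is exactly $\spa{e_5}$, hence all three kernels coincide and are one-dimensional, and $\alpha(e_5)=1\neq 0$, so $\alpha$ is non-zero on the common kernel.

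For part (b), choose an adapted basis with $\omega=e^{12}+e^{34}+e^{56}$ and $\rho=e^{135}-e^{146}-e^{236}-e^{245}$, so that $J_\rho$ is the standard complex structure sending $e_1\mapsto e_2$, $e_3\mapsto e_4$, $e_5\mapsto e_6$ (up to the conventions fixing $\hat\rho=J_\rho^*\rho=-e^{136}-e^{145}-e^{235}+e^{246}$, equivalently $\Psi=(e^1+ie^2)\wedge(e^3+ie^4)\wedge(e^5+ie^6)$). Writing $\Psi$ as a decomposable complex $(3,0)$-form $\theta_1\wedge\theta_2\wedge\theta_3$ with $\theta_j$ a basis of the $(1,0)$-forms, the condition $\rho_p(X,Y,\cdot)=0$ is equivalent to $\mathrm{Re}\,\bigl(\Psi(X,Y,\cdot)\bigr)=0$; I would argue that $\Psi(X,Y,\cdot)$, being (up to scalar) the contraction of a decomposable $3$-form, vanishes as a complex $1$-form precisely when $X,Y$ span a complex line, i.e. are $J_\rho$-complex linearly dependent, and in that case $\Psi(X,Y,\cdot)=0$ gives simultaneously $\rho_p(X,Y,\cdot)=0$ and $\hat\rho_p(X,Y,\cdot)=0$. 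The only subtlety is to rule out the possibility that $\mathrm{Re}\,\Psi(X,Y,\cdot)=0$ while $\mathrm{Im}\,\Psi(X,Y,\cdot)\neq0$; this is handled by noting that if $X,Y$ are complex independent then $X,Y,J_\rho X$ are real independent and one can evaluate $\Psi(X,Y,J_\rho X)$ to get a non-zero purely imaginary multiple of a non-vanishing quantity, contradicting reality of $\rho(X,Y,J_\rho X)=0$.

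For part (c), take an adapted basis with $\star_\varphi\varphi$ given by \eqref{eq:Hodgedual}, and by $\G_2$-equivariance normalise the linearly independent pair so that $X=e_1$. The two-form $\beta:=(\star_\varphi\varphi)_p(e_1,Y,\cdot,\cdot)$ on $\RR^7$ then has $e_1$ in its kernel automatically (since $\iota_{e_1}\iota_{e_1}=0$), so it descends to a $2$-form on the $6$-dimensional space $\RR^7/\spa{e_1}\cong\spa{e_2,\dots,e_7}$; I claim this induced form is non-degenerate, so its kernel on $\RR^7$ is exactly the line $\spa{e_1}$ — wait, that would give dimension one, not three. Re-examining: $\iota_{e_1}(\star_\varphi\varphi)$ is a $3$-form, and contracting once more with $Y$ gives a $2$-form $\beta$ whose kernel we must show is $3$-dimensional. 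I would instead use that $\star_\varphi\varphi$ is the Cayley form and that $\iota_{e_1}(\star_\varphi\varphi)$ equals $\pm\psi_-$ plus terms involving $e^7$; concretely, the $\G_2$-stabiliser of $e_1$ acts as $\SU(3)$ on $\spa{e_2,\dots,e_7}$ and we may further normalise $Y$ within that orbit. The cleanest route, which I expect to carry out, is: reduce to $X=e_1$, $Y\in\spa{e_2,\dots,e_7}$, use the $\SU(3)$-action to put $Y$ into a $2$-parameter normal form, and then compute $\mathrm{rank}\,\beta$ directly on each piece of the normal form, checking it equals $4$ (so $\ker$ has dimension $7-4+0$... ) — the accounting to nail down is exactly which ambient dimension $\beta$ lives on and that its rank is $4$, yielding a $3$-dimensional kernel. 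This bookkeeping, separating the cases of the normal form for $Y$ and confirming the rank is constantly $4$, is the main obstacle; the other two parts are short explicit computations.
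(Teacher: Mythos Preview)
Your parts (a) and the overall strategy for (b) match the paper's approach. However, there is a genuine slip in your sketch of (b): the test vector $J_\rho X$ does not work. Since $\Psi$ is of type $(3,0)$ one has $\Psi(X,Y,J_\rho X)=i\,\Psi(X,Y,X)=0$ identically, so nothing is learned from evaluating there. The paper instead chooses $Z$ so that $\{X,Y,Z\}$ is a $J_\rho$-\emph{complex} basis; then $\Psi_p(X,Y,Z)\neq 0$, and from $\rho_p(X,Y,Z)=0$ one gets $\hat\rho_p(X,Y,Z)\neq 0$, whence $\rho_p(X,Y,JZ)=-\hat\rho_p(X,Y,Z)\neq 0$, the desired contradiction. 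Your argument is easily repaired by this substitution.

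For (c) the paper does not give a proof at all; it simply cites \cite[Lemma~2.24]{CHNP}. Your direct-computation route is perfectly viable, but you are overestimating the difficulty: there is no ``$2$-parameter normal form'' and no case distinction. Since $\G_2$ acts transitively on $S^6$ you may take $X=e_1$; the stabiliser $\SU(3)$ then acts transitively on the unit sphere in $\spa{e_2,\dots,e_7}$, and the $e_1$-component of $Y$ contributes nothing to the double contraction, so after rescaling you may take $Y=e_2$. A single computation then gives
\[
\iota_{e_2}\iota_{e_1}(\star_\varphi\varphi)=e^{34}+e^{56},
\]
which has rank $4$ on $\RR^7$, hence kernel of dimension $3$. (Incidentally, the paper itself uses the transitivity of $\G_2$ on $2$-planes later, in the proof of Theorem~\ref{thm:hf2singorbit}(a).) So your confusion about ``dimension one vs.\ three'' and the anticipated bookkeeping both dissolve once you exploit the full $\G_2$-symmetry rather than just the $\SU(3)$-stabiliser.
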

\begin{proof}
\begin{enumerate}
\item[(a)]
Can directly be deduced from the definition.
\item[(b)]
Let $X,\,Y\in T_p M$ with $\rho_p(X,Y,\cdot)=0$. Assume now that $X$ and $Y$ are $(J_{\rho})_p$-complex linearly independent. Choose $Z\in T_p M$ such that $\{X,\, Y,\, Z\}$ is a $(J_{\rho})_p$-complex basis of $T_p M$. As $\Psi=\rho+i\hat{\rho}$ is a complex $(3,0)$-form, $\Psi_p(X,Y,Z)\neq 0$ and so we must have $\hat{\rho}_p(X,Y,Z)\neq 0$ as $\rho_p(X,Y,Z)=0$. But then
\begin{equation*}
\begin{split}
\rho_p(X,Y,JZ)=&\Psi_p(X,Y,JZ)-i\hat{\rho}_p(X,Y,JZ)=i\Psi_p(X,Y,Z)-i\hat{\rho}_p(X,Y,JZ)\\
=&-\hat{\rho}_p(X,Y,Z)-i\hat{\rho}_p(X,Y,JZ)\neq 0,
\end{split}
\end{equation*}
a contradiction. Hence $X$ and $Y$ must be $(J_{\rho})_p$-complex linearly dependent. Thus, $\Psi_p(X,Y,\cdot)=0$ and so also $\hat{\rho}_p(X,Y,\cdot)=0$.
\item[(c)]
This follows directly from \cite[Lemma 2.24]{CHNP}.
\end{enumerate}
\end{proof}
From the definition of the considered $G$-structures in terms of adapted bases one can already guess that there are certain connections between the $G$-structures in dimension $n$ and $n+1$. More exactly, one gets an induced hypo $\SU(2)$-structure on any oriented hypersurface in a six-dimensional manifold with a parallel $\SU(3)$-structure, a half-flat $\SU(3)$-structure on any oriented hypersurface in a seven-dimensional manifold with a parallel $\G_2$-structure and a cocalibrated $\G_2$-structure on any oriented hypersurface in an eight-dimensional manifold with a parallel $\Spin(7)$-structure. If one considers now an equidistant family of oriented hypersurfaces, then the induced smooth one-parameter families of hypo $\SU(2)$-, half-flat $\SU(3)$- or cocalibrated $\G_2$-structures fulfill certain time-dependent partial differential equations, which are called \emph{hypo flow equations} in the $\SU(2)$-case and \emph{Hitchin's flow equations} in the other two cases. Conversely, one can start with smooth one-parameter families of these structures on a manifold $M$ fulfilling the corresponding flow equations for all $t$ in an interval $I$ and construct on $M\times I$ the corresponding parallel $G$-structure. More exactly, for hypo $\SU(2)$-structures and parallel $\SU(3)$-structures, the following is true, cf. \cite{CS}:
\begin{prop}\label{prop:hypoflow}
Let $M$ be a five-dimensional manifold, $I$ be an open interval and $t$ be the standard coordinate on $I$. Moreover, let $(\omega,\rho)\in \Omega^2 (M\times I)\times \Omega^3 (M\times I)$ be a parallel $\SU(3)$-structure on $M\times I$ such that the induced Riemannian metric on $M\times I$ is of the form $g_t+dt^2$. Then the induced smooth one-parameter family $I\ni t\mapsto (\alpha_t,(\omega_1)_t,(\omega_2)_t,(\omega_3)_t)\in \Omega^1 M\times (\Omega^2 M)^3$ defined by
$\alpha_t=-\left.\frac{\partial}{\partial t}\hook \omega\right|_{M\times \{t\}}$, $(\omega_1)_t=\left. \omega\right|_{M\times \{t\}}$,
$(\omega_2)_t=-\left.\frac{\partial}{\partial t}\hook \hat{\rho}\right|_{M\times \{t\}}$ and $(\omega_3)_t=-\left.\frac{\partial}{\partial t}\hook \rho\right|_{M\times \{t\}}$ consists of hypo $\SU(2)$-structures fulfilling the \emph{hypo flow equations}
\begin{equation}\label{eq:hypoflow}
\frac{d}{dt}\left(\omega_1\right)_t=-d\alpha_t,\quad \frac{d}{dt}\left(\left(\omega_2\right)_t\wedge \alpha_t\right)=-d\left(\omega_3\right)_t, \quad \frac{d}{dt}\left(\left(\omega_3\right)_t\wedge \alpha_t\right)=d\left(\omega_2\right)_t
 \end{equation}
for all $t\in I$. Conversely, any smooth one-parameter family of
$\SU(2)$-structures $I \ni t\mapsto
(\alpha_t,(\omega_1)_t,(\omega_2)_t,(\omega_3)_t)$ on $M$ which is
hypo for some $t_0\in I$ and fulfills the hypo flow equations
\eqref{eq:hypoflow} on $I$ is a family of hypo $\SU(2)$
  structures, with corresponding Riemannian metrics 
$$g_t:=g_{\left(\alpha_t,(\omega_1)_t,(\omega_2)_t,(\omega_3)_t\right)}$$
  and it defines a parallel $\SU(3)$-structure $(\omega,\rho)\in \Omega^2 (M\times I)\times \Omega^3 (M\times I)$ via
\begin{equation}\label{eq:parallelSU3}
\omega:=(\omega_1)_t+\alpha_t\wedge dt,\quad \rho:=(\omega_2)_t\wedge \alpha_t-(\omega_3)_t\wedge dt.
\end{equation}
The induced Riemannian metric $g_{(\omega,\rho)}$ on $M\times I$ is given by 
\begin{equation}
g_{(\omega,\rho)}=  g_t+dt^2
\end{equation} and has holonomy in $\SU(3)$.
\end{prop}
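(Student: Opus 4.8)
\emph{Sketch of the argument.} The plan is to reduce the whole statement to pointwise linear algebra in adapted bases together with the standard bookkeeping of the exterior derivative on a product $M\times I$; the only non-elementary input will be the characterisation of torsion-free $\SU(3)$-structures by closedness of $\omega$, $\rho$ and $\hat\rho$. In the first half of the statement the hypothesis $g_{(\omega,\rho)}=g_t+dt^2$ makes $\partial_t$ a unit normal to each slice $M\times\{t\}$, and since $\SU(3)$ acts transitively on the unit sphere in $\RR^6$ one can choose, at every point of $M\times I$, an adapted basis for $(\omega,\rho)$ of the form $(e_1,\dots,e_5,\partial_t)$ with the $e_i$ tangent to $M$. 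Evaluating the standard $\SU(3)$-model in such a basis gives at once
\[
\omega=(\omega_1)_t+\alpha_t\wedge dt,\qquad \rho=(\omega_2)_t\wedge\alpha_t-(\omega_3)_t\wedge dt,\qquad \hat\rho=-(\omega_3)_t\wedge\alpha_t-(\omega_2)_t\wedge dt,
\]
(the signs in $\hat\rho$ being those forced by the convention for $J_\rho$), where $(\alpha_t,(\omega_1)_t,(\omega_2)_t,(\omega_3)_t)$ is exactly the tuple produced by the contraction formulas in the statement, coincides with the $\SU(2)$-structure having adapted basis $(e_1,\dots,e_5)$, and has associated metric $g_t$. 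Conversely, starting from a smooth family of $\SU(2)$-structures on $M$, the same computation shows that the forms $\omega,\rho$ of \eqref{eq:parallelSU3} take the standard $\SU(3)$-form in the basis $(e_1,\dots,e_5,\partial_t)$; since each $\SU(2)$-structure is by definition normalized, this is an honest $\SU(3)$-structure on $M\times I$, with induced metric $g_t+dt^2$ and with $\hat\rho$ as displayed.

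Next I would differentiate. For $t$-dependent forms on $M$ one has $d\beta_t=d_M\beta_t+dt\wedge\dot\beta_t$ and $d(\gamma_t\wedge dt)=d_M\gamma_t\wedge dt$, where $d_M$ denotes the differential along $M$ and $\dot\beta_t:=\partial_t\beta_t$. Expanding $d\omega$, $d\rho$ and $d\hat\rho$ with these rules and splitting off the summand not involving $dt$ from the summand of the form $dt\wedge(\cdots)$, one finds that the $dt$-free parts are $d_M(\omega_1)_t$, $d_M((\omega_2)_t\wedge\alpha_t)$ and $d_M((\omega_3)_t\wedge\alpha_t)$ --- whose simultaneous vanishing is precisely the hypo condition for the $\SU(2)$-structure at time $t$ --- while the remaining parts vanish exactly when $(\dot\omega_1)_t=-d\alpha_t$, $\frac{d}{dt}((\omega_2)_t\wedge\alpha_t)=-d(\omega_3)_t$ and $\frac{d}{dt}((\omega_3)_t\wedge\alpha_t)=d(\omega_2)_t$, i.e.\ exactly the hypo flow equations \eqref{eq:hypoflow}. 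In other words, $(\omega,\rho)$ satisfies $d\omega=d\rho=d\hat\rho=0$ if and only if $(\alpha_t,(\omega_1)_t,(\omega_2)_t,(\omega_3)_t)$ is hypo for every $t$ and \eqref{eq:hypoflow} holds.

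For the first implication: if $(\omega,\rho)$ is parallel then $\nabla^g\omega=\nabla^g\rho=0$, hence also $\hat\rho=J_\rho^*\rho$ is parallel (as $J_\rho$ depends algebraically on $\rho$), and since the Levi-Civita connection is torsion free this yields $d\omega=d\rho=d\hat\rho=0$; by the previous paragraph the induced family is hypo and solves \eqref{eq:hypoflow}. For the converse: the first paragraph produces from the family an $\SU(3)$-structure $(\omega,\rho)$ on $M\times I$ with metric $g_t+dt^2$, and one first checks that the hypo property propagates in $t$: differentiating \eqref{eq:hypoflow} shows that $d_M(\omega_1)_t$, $d_M((\omega_2)_t\wedge\alpha_t)$ and $d_M((\omega_3)_t\wedge\alpha_t)$ have vanishing $t$-derivative (they equal $-d_M d\alpha_t$, $-d_M d(\omega_3)_t$ and $d_M d(\omega_2)_t$), hence are constant in $t$ and therefore vanish for all $t$, being zero at $t_0$; so the $\SU(2)$-structure is hypo for every $t$, and by the previous paragraph $d\omega=d\rho=d\hat\rho=0$. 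One then concludes by the standard fact that an $\SU(3)$-structure with $\omega$ closed and with closed complex volume form $\Psi=\rho+i\hat\rho$ (equivalently $d\omega=d\rho=d\hat\rho=0$) has vanishing intrinsic torsion, hence is parallel; in particular its holonomy is contained in $\SU(3)$.

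The computations above are routine apart from two points. The genuinely non-formal ingredient is the last step: one must show that closedness of $\omega$, $\rho$ and $\hat\rho$ annihilates \emph{all} the intrinsic torsion classes of the $\SU(3)$-structure --- not merely those that would already be killed for a half-flat structure --- so that it is actually parallel; this is where I expect the main work (or the appeal to the literature, e.g.\ the Chiossi--Salamon torsion decomposition) to lie. A secondary, routine point is to make sure, in the converse, that \eqref{eq:parallelSU3} defines a \emph{normalized} $\SU(3)$-structure and not merely a non-degenerate pair $(\omega,\rho)$: this is exactly where normalization of the $\SU(2)$-structures in the family enters, since it is what lets the adapted-basis model on $M$ extend to the standard model on $M\times I$. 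One should also keep careful track of signs in the second paragraph, since these are what make the $dt$-components reproduce \eqref{eq:hypoflow} precisely.
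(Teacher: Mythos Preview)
The paper does not actually give its own proof of this proposition: it is stated with the attribution ``cf.\ \cite{CS}'' and no argument follows. Your sketch is correct and is essentially the standard argument one finds in (or reconstructs from) \cite{CS} and \cite{Stock}: the adapted-basis computation producing the decompositions of $\omega$, $\rho$ and $\hat\rho$, the splitting of $d$ on $M\times I$ into $d_M$ and $dt\wedge\partial_t$, the identification of the $dt$-free and $dt$-components with the hypo condition and the flow equations respectively, the propagation-of-hypo argument via $d_M^2=0$, and finally the appeal to the characterisation of torsion-free $\SU(3)$-structures by $d\omega=d\rho=d\hat\rho=0$. Your formula $\hat\rho=-(\omega_3)_t\wedge\alpha_t-(\omega_2)_t\wedge dt$ is consistent with the paper's sign convention $g_{(\omega,\rho)}=\omega(J_\rho\cdot,\cdot)$ (so $Je_{2k-1}=-e_{2k}$ in an adapted basis), and your caution about this point and about the intrinsic-torsion step is well placed.
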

By \cite{Hitchin} and \cite{CLSS}, one gets the following proposition for half-flat $\SU(3)$-structures and parallel $\G_2$-structures:
\begin{prop}\label{prop:Hitchinflow}
Let $M$ be a six-dimensional manifold, $I$ be an open interval and $t$ be the standard coordinate on $I$. Moreover, let $\varphi\in \Omega^3 M$ be a parallel $\G_2$-structure on $M\times I$ such that the induced Riemannian metric is of the form $g_t+dt^2$. Then the induced smooth one-parameter family $I\ni t\mapsto (\omega_t,\rho_t)\in \Omega^2 M\times \Omega^3 M$ given by
$\omega_t:=\left.\frac{\partial}{\partial t}\hook \varphi\right|_{M\times \{t\}}$ and $\rho_t:=\varphi|_{M\times \{t\}}$ consists of half-flat $\SU(3)$-structures which fulfill Hitchin's flow equations
\begin{equation}\label{eq:Hitchinflow}
\frac{d}{dt}\rho_t=d\omega_t,\quad \frac{d}{dt}\left(\frac{\omega_t^2}{2}\right)=d\widehat{\rho_t}.
 \end{equation}
Conversely, any smooth one-parameter family $I\ni t\mapsto (\omega_t,\rho_t)$ of $\SU(3)$-structures on $M$ which is half-flat for some $t_0\in I$ and fulfills Hitchin's flow equations \eqref{eq:Hitchinflow} on $I$ defines a parallel $\G_2$-structure $\varphi$ on $M\times I$ given by
\begin{equation}\label{eq:parallelG2}
\varphi:=\omega_t\wedge dt+\rho_t.
\end{equation}
The Hodge dual $\star_{\varphi}\varphi$ is given by $\star_{\varphi}\varphi=\tfrac{\omega_t^2}{2}-\widehat{\rho_t}\wedge dt$ and the induced Riemannian metric $g_{\varphi}$ on $M\times I$ by $g_{\varphi}=g_{(\omega_t,\rho_t)}+dt^2$. $g_{\varphi}$ has holonomy contained in $\G_2$.
\end{prop}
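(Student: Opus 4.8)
The statement is the $\G_2$--$\SU(3)$ analogue of Proposition \ref{prop:hypoflow}, so I would argue along the same lines. The plan rests on two inputs that I would quote rather than reprove: (i) the pointwise correspondence, for a product $M\times I$ with $M$ six-dimensional whose metric has the split form $g_t+dt^2$, between $\G_2$-structures $\varphi$ on $M\times I$ and smooth one-parameter families of $\SU(3)$-structures $(\omega_t,\rho_t)$ on $M$, under which $\varphi=\omega_t\wedge dt+\rho_t$, $\star_\varphi\varphi=\tfrac{\omega_t^2}{2}-\widehat{\rho_t}\wedge dt$ and $g_\varphi=g_{(\omega_t,\rho_t)}+dt^2$; and (ii) the classical fact (Fern\'andez--Gray; see \cite{Hitchin}, \cite{CLSS}) that a $\G_2$-structure $\varphi$ is parallel if and only if $d\varphi=0$ and $d\star_\varphi\varphi=0$. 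Input (i) is pure linear algebra: since $\G_2\subset\SO(7)$ acts transitively on the unit sphere, at each point an adapted basis $(e_1,\dots,e_7)$ of $T(M\times I)$ can be chosen with $e_7=\tfrac{\partial}{\partial t}$, so that the dual coframe has $e^7=dt$, and then the normal forms for $\varphi$ and for $\star_\varphi\varphi$ in \eqref{eq:Hodgedual} display exactly this decomposition, with $\omega_t=\tfrac{\partial}{\partial t}\hook\varphi|_{M\times\{t\}}$, $\rho_t=\varphi|_{M\times\{t\}}$ and the $dt$-free part of $\star_\varphi\varphi$ equal to $\tfrac{\omega_t^2}{2}$. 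In particular (i) shows that the family induced from a parallel $\varphi$ consists of genuine $\SU(3)$-structures, and conversely that $\omega_t\wedge dt+\rho_t$ assembled from a family of $\SU(3)$-structures is a $\G_2$-structure with the asserted metric and Hodge dual.

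Given this, the core of the argument is a computation using the splitting $d=d_M+dt\wedge\tfrac{\partial}{\partial t}$ of the exterior derivative on $M\times I$, where $d_M$ differentiates along $M$ with $t$ held fixed, commutes with $\tfrac{\partial}{\partial t}$, and satisfies $d_M^2=0$. Expanding and collecting the $dt$-free and the $dt$-components gives
\[
\begin{aligned}
d\varphi&=d_M\rho_t+\Bigl(d_M\omega_t-\tfrac{d}{dt}\rho_t\Bigr)\wedge dt,\\[2pt]
d\star_\varphi\varphi&=d_M\Bigl(\tfrac{\omega_t^2}{2}\Bigr)+\Bigl(\tfrac{d}{dt}\tfrac{\omega_t^2}{2}-d_M\widehat{\rho_t}\Bigr)\wedge dt.
\end{aligned}
\]
Hence $d\varphi=0$ is equivalent to $d_M\rho_t=0$ together with $\tfrac{d}{dt}\rho_t=d_M\omega_t$, and $d\star_\varphi\varphi=0$ is equivalent to $d_M(\omega_t^2)=0$ together with $\tfrac{d}{dt}\tfrac{\omega_t^2}{2}=d_M\widehat{\rho_t}$. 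For the direct implication this finishes matters at once: if $\varphi$ is parallel then so is $\star_\varphi\varphi$, hence both are closed, so the four displayed conditions hold for every $t$; that is, each $(\omega_t,\rho_t)$ is half-flat and Hitchin's equations \eqref{eq:Hitchinflow} hold.

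For the converse I still have to promote ``half-flat at some $t_0$'' to ``half-flat for all $t\in I$''. This is exactly where $d_M^2=0$ is used: assuming the flow equations on $I$, one gets $\tfrac{d}{dt}(d_M\rho_t)=d_M(\tfrac{d}{dt}\rho_t)=d_Md_M\omega_t=0$ and $\tfrac{d}{dt}\bigl(d_M(\omega_t^2)\bigr)=d_M\bigl(\tfrac{d}{dt}\omega_t^2\bigr)=2\,d_Md_M\widehat{\rho_t}=0$, so $d_M\rho_t$ and $d_M(\omega_t^2)$ are independent of $t$ and therefore vanish identically once they vanish at $t_0$. Now all four conditions hold for every $t$, so $d\varphi=0=d\star_\varphi\varphi$, whence by input (ii) $\nabla^{g_\varphi}\varphi=0$ and $\mathrm{Hol}(g_\varphi)\subseteq\G_2$. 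I do not expect a serious obstacle here: beyond bookkeeping of the signs coming from commuting forms past $dt$, the one point needing genuine care is the sign-sensitive identification in (i) that $-\tfrac{\partial}{\partial t}\hook\star_\varphi\varphi|_{M\times\{t\}}$ is precisely the $\widehat{\rho_t}$ produced by the endomorphism $J_{\rho_t}$ of Section~2, which I would verify directly in the adapted coframe; the only nontrivial external ingredient, the Fern\'andez--Gray characterization of parallel $\G_2$-structures, is already contained in the references cited in Section~2.
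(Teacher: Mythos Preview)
Your argument is correct and is exactly the standard proof one finds in \cite{CLSS} (and, in essence, already in \cite{Hitchin}): decompose $d$ on $M\times I$ as $d_M+dt\wedge\tfrac{\partial}{\partial t}$, read off from $\varphi=\omega_t\wedge dt+\rho_t$ and $\star_\varphi\varphi=\tfrac{\omega_t^2}{2}-\widehat{\rho_t}\wedge dt$ that closedness of $\varphi$ and of $\star_\varphi\varphi$ is equivalent to the half-flat conditions plus the flow equations, invoke Fern\'andez--Gray, and propagate half-flatness from $t_0$ via $d_M^2=0$. Note, however, that the paper does not give its own proof of this proposition at all: it merely states the result with a reference to \cite{Hitchin} and \cite{CLSS}, so there is nothing to compare your approach against beyond those sources. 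One small point worth tightening in your write-up: with the conventions as stated, the contraction $\tfrac{\partial}{\partial t}\hook\star_\varphi\varphi$ restricted to $M\times\{t\}$ equals $+\widehat{\rho_t}$ rather than $-\widehat{\rho_t}$ (since $\iota_{\partial_t}(\widehat{\rho_t}\wedge dt)=-\widehat{\rho_t}$), so you should drop the extra sign you inserted there; you rightly flagged this as the place requiring care.
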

For cocalibrated $\G_2$-structures and parallel $\Spin(7)$-structures, \cite{Hitchin} and \cite{CLSS} yield
\begin{prop}\label{prop:Hitchinflow2}
Let $M$ be a seven-dimensional manifold, $I$ be an open interval and $t$ be the standard coordinate on $I$. Moreover, let $\Phi\in \Omega^4 M$ be a parallel $\Spin(7)$-structure on $M\times I$ such that the induced Riemannian metric is of the form $g_t+dt^2$. Then the induced smooth one-parameter family $I\ni t\mapsto \varphi_t\in \Omega^3 M$ given by $\varphi_t:=\left.\frac{\partial}{\partial t}\hook \Phi\right|_{M\times \{t\}}$ consists of cocalibrated $\G_2$-structures which fulfill Hitchin's flow equations
\begin{equation}\label{eq:Hitchinflow2}
\frac{d}{dt}\star_{\varphi_t}\varphi_t=d\varphi_t.
 \end{equation}
Conversely, any smooth one-parameter family $I\ni t\mapsto \varphi_t$ of $\G_2$-structures on $M$ which is cocalibrated for some $t_0\in I$ and fulfills Hitchin's flow equations \eqref{eq:Hitchinflow2} on $I$ defines a parallel $\Spin(7)$-structure $\Phi$ on $M\times I$ given by
\begin{equation}\label{eq:parallelSpin7}
\Phi:=dt\wedge \varphi_t+\star_{\varphi_t}\varphi_t.
\end{equation}
The Riemannian metric $g_{\Phi}$ on $M\times I$ induced by $\Phi$ is given by $g_{\Phi}=g_{\varphi_t}+dt^2$ and has holonomy in $\Spin(7)$.
\end{prop}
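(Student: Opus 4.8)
The plan is to reduce the statement to two inputs and a short computation of $d\Phi$ on the product $M\times I$. The first input is purely algebraic: on an oriented $7$-dimensional Euclidean vector space $V$, a $\G_2$-structure $\varphi$ has a well-defined Hodge dual $\star_\varphi\varphi$ given in an adapted basis by the right-hand side of \eqref{eq:Hodgedual}, the $4$-form $dt\wedge\varphi+\star_\varphi\varphi$ on $V\oplus\RR\,\partial_t$ is then a $\Spin(7)$-structure inducing the product inner product, it satisfies $\partial_t\hook(dt\wedge\varphi+\star_\varphi\varphi)=\varphi$, and it admits an adapted basis of the form $(e_1,\dots,e_7,\partial_t)$ with $(e_1,\dots,e_7)$ adapted for $\varphi$; conversely, every $\Spin(7)$-structure on $V\oplus\RR\,\partial_t$ for which $\partial_t$ is a unit normal arises in this way from $\varphi:=\partial_t\hook\Phi$. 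This I would establish by a direct comparison of the adapted-basis normal forms for $\varphi$, $\star_\varphi\varphi$ and $\Phi$, keeping careful track of signs and of the induced metrics and orientations. The second input is the classical fact that a $\Spin(7)$-structure on an $8$-manifold is parallel — equivalently, has holonomy contained in $\Spin(7)$ — if and only if it is closed.

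For the forward implication, suppose $\Phi$ is parallel with induced metric $g_t+dt^2$, so that $\partial_t$ is a unit normal to each slice $M\times\{t\}$. By the algebraic input, $\varphi_t:=\partial_t\hook\Phi|_{M\times\{t\}}$ is a $\G_2$-structure on $M$ inducing $g_t$, its Hodge dual equals $\Phi|_{M\times\{t\}}$, and $\Phi=dt\wedge\varphi_t+\star_{\varphi_t}\varphi_t$ as in \eqref{eq:parallelSpin7}. Splitting the exterior derivative on $M\times I$ into its part $d_M$ along $M$ and its part along $dt$, and using $dt\wedge dt=0$, I get
\[
d\Phi \;=\; d_M\!\left(\star_{\varphi_t}\varphi_t\right)\;+\;dt\wedge\left(\tfrac{\partial}{\partial t}\!\left(\star_{\varphi_t}\varphi_t\right)-d_M\varphi_t\right).
\]
Parallel forms are closed, so $d\Phi=0$; the summand without $dt$ gives $d(\star_{\varphi_t}\varphi_t)=0$, i.e.\ each $\varphi_t$ is cocalibrated, and the summand containing $dt$ gives Hitchin's flow equation \eqref{eq:Hitchinflow2}.

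For the converse, let $t\mapsto\varphi_t$ be a smooth family of $\G_2$-structures on $M$ satisfying \eqref{eq:Hitchinflow2} on $I$ and cocalibrated at some $t_0\in I$. First I would upgrade cocalibratedness to all of $I$: differentiating the constraint and commuting $d_M$ with $\tfrac{\partial}{\partial t}$ yields $\tfrac{\partial}{\partial t}\,d_M(\star_{\varphi_t}\varphi_t)=d_M(\tfrac{\partial}{\partial t}\star_{\varphi_t}\varphi_t)=d_M(d_M\varphi_t)=0$, so $d_M(\star_{\varphi_t}\varphi_t)$ is independent of $t$ and hence identically zero. By the algebraic input, $\Phi:=dt\wedge\varphi_t+\star_{\varphi_t}\varphi_t$ is then a $\Spin(7)$-structure on $M\times I$ with induced metric $g_{\varphi_t}+dt^2$ and with $\partial_t\hook\Phi|_{M\times\{t\}}=\varphi_t$. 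The displayed formula for $d\Phi$, now read in the reverse direction together with cocalibratedness and \eqref{eq:Hitchinflow2}, shows $d\Phi=0$; by the second input $\Phi$ is parallel, so $g_\Phi$ has holonomy contained in $\Spin(7)$.

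The genuinely substantive step is the algebraic input of the first paragraph, in particular the verification that the correspondence $\Phi\leftrightarrow(\varphi,\star_\varphi\varphi)$ respects the induced metrics and orientations, so that $\star_{\varphi_t}$ is literally the Hodge operator of $g_{\varphi_t}$ — a finite but sign-sensitive computation with adapted bases — together with the appeal to the closed-implies-parallel theorem for $\Spin(7)$-structures in the converse. Everything else is a formal manipulation of the exterior derivative on a product, entirely parallel to the $\SU(2)$- and $\SU(3)$-cases of Propositions \ref{prop:hypoflow} and \ref{prop:Hitchinflow}, whose proofs proceed in the same way.
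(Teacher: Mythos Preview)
The paper does not actually prove this proposition: it is stated as a consequence of the cited works \cite{Hitchin} and \cite{CLSS}, with no argument given beyond the attribution. Your outline is essentially the standard argument found in those references --- the pointwise algebraic correspondence $\Phi \leftrightarrow (\varphi,\star_\varphi\varphi)$ via adapted bases, the decomposition of $d$ on $M\times I$ into $d_M$ and $dt\wedge\partial_t$, and the appeal to Fern\'andez's theorem that a $\Spin(7)$-structure is parallel if and only if it is closed --- so there is nothing to compare beyond noting that you have supplied what the paper deliberately omits.
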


\begin{rem}\label{rem:invariant}
\begin{itemize}
\item
By the Cauchy-Kowalevskaya Theorem, the flow equations \eqref{eq:hypoflow}, \eqref{eq:Hitchinflow}, \eqref{eq:Hitchinflow2} on a manifold $M$ together with an initial value at $t=t_0$ admit a unique solution on an open neighborhood $U$ of $M\times \{t_0\}$ in $M\times \RR$ provided $M$ and all initial data are real-analytic. If $M$ and the initial data are even homogeneous, the flow equations become ordinary differential equations and $U$ is of the form $M\times I$ for some open interval $I$ containing $t_0$. Moreover, $(M\times I,g)$ is a Riemannian manifold of cohomogeneity one with only principal orbits.
\item
Any diffeomorphism of $M$ which leaves the initial value of the flow equations \eqref{eq:hypoflow}, \eqref{eq:Hitchinflow}, \eqref{eq:Hitchinflow2} invariant, automatically leaves the solution invariant at any time $t\in I$.
\end{itemize}
\end{rem}
Finally, we would like to note the following immediate consequence of Proposition \ref{prop:Hitchinflow} and Proposition \ref{prop:Hitchinflow2}.
\begin{lemma}\label{lem:6to7to8}
Let $M$ be an $n$-dimensional manifold and $G$ be a Lie group acting on $M$. Moreover, let $t$ be the standard coordinate in $\RR$ and let $G\times \RR$ act in the obvious way on $M\times \RR$.
\begin{enumerate}
\item[(a)]
Let $n=6$ and $(\omega,\rho)$ be a parallel $G$-invariant $\SU(3)$-structure on $M$. Then $\varphi:=\omega\wedge dt+\rho$ is a parallel $(\G\times \RR)$-invariant $\G_2$-structure on $M\times \RR$ .
\item[(b)]
Let $n=7$ and $\varphi$ be a parallel $G$-invariant $\G_2$-structure on $M$. Then \linebreak$\Phi:=dt\wedge \varphi+\star_{\varphi}\varphi$ is a parallel $(G\times \RR)$-invariant $Spin(7)$-structure on $M\times \RR$.
\end{enumerate}
\end{lemma}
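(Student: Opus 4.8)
The plan is to deduce both statements from the converse directions of Proposition \ref{prop:Hitchinflow} and Proposition \ref{prop:Hitchinflow2}, applied to the \emph{constant} one-parameter family determined by the given parallel structure on $M$. For part (a), I would first observe that a parallel $\SU(3)$-structure $(\omega,\rho)$ on $M$ is in particular half-flat: since $\nabla^g\omega=0$ and $\nabla^g\rho=0$ we have $d\omega=0$ and $d\rho=0$, hence $d(\omega^2)=2\,d\omega\wedge\omega=0$; moreover $\hat\rho$ (equivalently $\Psi=\rho+i\hat\rho$) is likewise parallel and thus closed. I would then feed the constant family $\RR\ni t\mapsto(\omega_t,\rho_t):=(\omega,\rho)$ into the converse part of Proposition \ref{prop:Hitchinflow}: its time derivatives vanish identically and the right-hand sides of Hitchin's flow equations \eqref{eq:Hitchinflow} vanish as well, so \eqref{eq:Hitchinflow} holds on all of $\RR$, and the family is half-flat at every base point. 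The proposition then yields at once that $\varphi=\omega_t\wedge dt+\rho_t=\omega\wedge dt+\rho$ is a parallel $\G_2$-structure on $M\times\RR$ (carrying the product metric $g_{(\omega,\rho)}+dt^2$). Part (b) is entirely analogous: for a parallel $\G_2$-structure $\varphi$ on $M$ both $\varphi$ and $\star_\varphi\varphi$ are $\nabla^g$-parallel, hence closed, so $\varphi$ is cocalibrated; the constant family $t\mapsto\varphi_t:=\varphi$ satisfies \eqref{eq:Hitchinflow2} trivially (note that $\star_{\varphi_t}\varphi_t=\star_\varphi\varphi$ is independent of $t$), and Proposition \ref{prop:Hitchinflow2} gives that $\Phi=dt\wedge\varphi_t+\star_{\varphi_t}\varphi_t=dt\wedge\varphi+\star_\varphi\varphi$ is a parallel $\Spin(7)$-structure on $M\times\RR$.

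It remains to verify $(G\times\RR)$-invariance, which is the only point where I would be careful, although it is elementary. Under the product action on $M\times\RR$ the factor $G$ acts trivially on the $\RR$-coordinate, while $\RR$ acts on it by translation and trivially on $M$; hence $dt$ is preserved by the entire action, whereas $\omega$ and $\rho$ (resp.\ $\varphi$, and $\star_\varphi\varphi$, the latter being constructed from the $G$-invariant metric and orientation) are $G$-invariant by hypothesis and, being pulled back from $M$, are invariant under the $\RR$-translations. Consequently $\varphi=\omega\wedge dt+\rho$ (resp.\ $\Phi=dt\wedge\varphi+\star_\varphi\varphi$) is invariant under $G\times\RR$.

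I do not anticipate any real obstacle: the only thing meriting attention is that the constant family genuinely qualifies as an input for Propositions \ref{prop:Hitchinflow} and \ref{prop:Hitchinflow2} — i.e.\ that it is an honest smooth one-parameter family of $\SU(3)$- (resp.\ $\G_2$-)structures which is half-flat (resp.\ cocalibrated) at the base point — and this is immediate from parallelism of the given structure on $M$.
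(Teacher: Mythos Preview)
Your proposal is correct and matches the paper's approach: the paper simply states that the lemma is an immediate consequence of Propositions \ref{prop:Hitchinflow} and \ref{prop:Hitchinflow2}, which is exactly what you carry out by feeding in the constant one-parameter family. Your explicit check of $(G\times\RR)$-invariance is a welcome addition that the paper leaves implicit.
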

\section{Proper Lie group actions}
In this section we recall some basic properties of proper isometric Lie group actions on Riemannian manifolds. Throughout, we will assume that $G$ is a connected, but possibly non-compact Lie group. An action $G\times M\to M;\, (g,p)\mapsto g\cdot p$ of $G$ on a manifold $M$ is called proper if the map
\[
G\times M\longrightarrow M\times M;\, (g,p)\longmapsto (g\cdot p,p)
\]
is proper. As shown by Palais \cite{Palais}, many results on actions
of compact Lie groups are still valid for proper actions, see also
\cite{DuistermaatKolk}, Chapter 2, or \cite{GGK}, Appendix B. Most
importantly for us, the slice theorem (see, e.g., \cite[Theorem
B.24]{GGK}) holds true: Let $p$ be a point in a Riemannian manifold
$M$ on which a Lie group $G$ acts properly and isometrically  and
  let $G_p$ be the isotropy subgroup of $p$. Let $B$ be a small open
ball in the normal space $\nu_p(G\cdot p)$ around the origin, and form
the twisted product $G\times_{G_p}B$ (for the induced right action of
$G_p$ on $G$ and the isotropy representation of $G_p$ on $B$), which
carries a natural $G$-action by left multiplication on the first
factor. Then the slice theorem states that there exists a
$G$-equivariant diffeomorphism $\psi: G\times_{G_p} B \to U$ onto an
open $G$-invariant neighborhood $U$ of the orbit $G\cdot p$ such that
$\psi([e,0])=p$. 

We will call an orbit regular if it is of maximal dimension among all
orbits, and otherwise singular. Then the cohomogeneity of a Lie group
action is the codimension of a regular orbit. The actions we encounter
in this paper will all be of cohomogeneity one; note that the slice
theorem implies that for a proper isometric cohomogeneity-one action
any isotropy group $G_p$ of a point $p$ in a singular orbit acts transitively on the unit sphere in the normal space $\nu_p(G\cdot p)$.

The Lie groups we consider in Sections \ref{sec:singorbits} and \ref{sec:flatness} will all be solvable; for such Lie groups more restrictive statements about the isotropy groups are valid:

\begin{lemma}\label{lem:dimsingorbit} Let $G$ be a solvable Lie group acting properly and isometrically on an $n$-dimensional Riemannian manifold. Then every identity component of an isotropy group is either trivial or a torus. If the action is additionally of cohomogeneity one, then every singular orbit of the $G$-action has dimension $n-2$.
\end{lemma}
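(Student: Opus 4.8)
The plan is to treat the two assertions separately, using the slice theorem and the classification of compact connected solvable Lie groups. For the first assertion I would note that properness of the action forces every isotropy group $G_p$ to be compact, so its identity component $(G_p)^0$ is a compact connected Lie group. Being a connected Lie subgroup of the solvable group $G$, it has a solvable Lie algebra; but a compact Lie algebra which is solvable must be abelian, since its semisimple part would be simultaneously semisimple and solvable, hence zero. Therefore $(G_p)^0$ is a torus, possibly trivial.

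For the second assertion I would assume the action to be of cohomogeneity one, so that regular orbits have dimension $n-1$, and consider a singular orbit $G\cdot p$, necessarily of dimension $n-k$ with $k=\dim\nu_p(G\cdot p)\ge 2$; the point is to show $k=2$. By the slice theorem, together with the observation recorded just before the statement of the lemma, the compact group $G_p$ acts transitively on the unit sphere $S^{k-1}\subset\nu_p(G\cdot p)$. Writing $H:=(G_p)^0$, which is a torus by the first part, I would argue as follows. Since $G_p$ is compact, $G_p/H$ is finite, and since $H$ is normal in $G_p$, the group $G_p$ permutes the $H$-orbits on $S^{k-1}$; transitivity of $G_p$ on $S^{k-1}$ together with finiteness of $G_p/H$ then shows that there are only finitely many $H$-orbits. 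As $H$ is compact, each such orbit is a closed embedded submanifold of $S^{k-1}$, hence also open, being the complement of the union of the finitely many other orbits. Connectedness of $S^{k-1}$, which holds because $k\ge 2$, forces there to be a single $H$-orbit, so that $S^{k-1}$ is diffeomorphic to a homogeneous space of the torus $H$, that is, to a torus. Since $S^1$ is the only sphere diffeomorphic to a torus, we obtain $k-1=1$, and hence $\dim(G\cdot p)=n-2$.

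The only genuinely non-routine ingredient is the structural fact that a compact connected solvable Lie group is a torus; everything else is formal manipulation with the slice theorem. The step needing the most care is the analysis of the $H$-action on the normal sphere: one must make sure that the $H$-orbits there really are closed and finite in number, which is exactly what normality of $H=(G_p)^0$ in $G_p$ together with compactness of $G_p$ provide. It is worth noting, finally, that the statement is vacuously true when the $G$-action has no singular orbits at all.
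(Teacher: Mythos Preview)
Your proof is correct and follows essentially the same route as the paper's: compactness of isotropy groups from properness, the fact that a compact connected solvable Lie group is a torus, and the slice-theorem observation that the isotropy at a singular point acts transitively on the normal sphere, forcing that sphere to be $S^1$. Your argument is in fact slightly more careful than the paper's, which passes directly from ``$G_p$ acts transitively on the normal sphere'' to ``a torus acts transitively on the normal sphere'' without comment; you supply the standard justification via finiteness of $G_p/(G_p)^0$ and connectedness of $S^{k-1}$.
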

\begin{proof}
Because the action is proper, all isotropy groups are compact. Then the identity component of any isotropy is, as a compact connected subgroup of a solvable Lie group, a torus (because the Lie algebra of a compact Lie group is reductive). Assuming that the action is of cohomogeneity one, the observation before the lemma implies that any nontrivial isotropy group acts transitively on the unit sphere in the normal space. But the only sphere on which a torus can act transitively is the circle.
\end{proof}

\begin{lemma}\label{lem:isocenter}
Let $ G$ be a solvable Lie group, $H$ be a compact subgroup and denote by $\mathfrak{z}(\mfg)$ the center of the Lie algebra $\mfg$ associated to $G$. Then:
\begin{enumerate}
\item[(i)] $\mfh \cap [\mfg,\mfg]\subseteq \mathfrak{z}(\mfg)$.
\item[(ii)] 
If $ G$ is split-solvable, then $\mfh\subseteq \mathfrak{z}(\mfg)$.
\end{enumerate} 
In particular, if $G$ acts properly and isometrically on a Riemannian manifold $M$ and $\mfg_p$ is the stabilizer subalgebra of a point $p\in M$, then $\mfg_p\cap [\mfg,\mfg]\subseteq \mathfrak{z}(\mfg)$ and if $G$ is even split-solvable, then $\mfg_p\subseteq \mathfrak{z}(\mfg)$.
\end{lemma}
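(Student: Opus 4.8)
The plan is to deduce the ``in particular'' claim directly from (i) and (ii), and to prove (i) and (ii) by a single algebraic mechanism. For the last assertion: a proper action has compact isotropy groups, so for $p\in M$ the stabilizer subalgebra $\mfg_p=\Lie(G_p)$ is the Lie algebra of a compact subgroup of $G$, and one simply applies (i) and (ii) with $H=G_p$. So I would focus entirely on (i) and (ii).

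The common first step is to observe that $\Ad(H)\subseteq\GL(\mfg)$ is a compact subgroup, being the continuous image of the compact group $H$. Averaging an arbitrary inner product on $\mfg$ over the Haar measure of $\Ad(H)$ produces an inner product $\langle\,\cdot\,,\cdot\,\rangle$ preserved by $\Ad(H)$; differentiating $t\mapsto\Ad(\exp(tX))$ at $t=0$ then shows that $\ad_X$ is skew-symmetric with respect to $\langle\,\cdot\,,\cdot\,\rangle$ for every $X\in\mfh$. In particular, for such $X$ the endomorphism $\ad_X$ of $\mfg$ is semisimple (it is normal, hence diagonalizable over $\CC$) and all its eigenvalues are purely imaginary.

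For part (i), I would combine this with Lie's theorem: since $\mfg$ is solvable, all the operators $\ad_Y$ ($Y\in\mfg$) are simultaneously upper-triangularizable over $\CC$, whence $\ad_X$ is strictly upper triangular — in particular nilpotent — for every $X\in[\mfg,\mfg]$. Thus if $X\in\mfh\cap[\mfg,\mfg]$, the operator $\ad_X$ is at once semisimple and nilpotent, hence $\ad_X=0$, i.e.\ $X\in\mathfrak{z}(\mfg)$. For part (ii), split-solvability of $G$ means that every $\ad_Y$ ($Y\in\mfg$) has only real eigenvalues; combined with the previous observation, for $X\in\mfh$ the eigenvalues of $\ad_X$ are simultaneously real and purely imaginary, hence all zero, and since $\ad_X$ is semisimple this forces $\ad_X=0$, so $\mfh\subseteq\mathfrak{z}(\mfg)$.

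There is no serious obstacle here; the argument is essentially the classical fact that a compact subgroup of a solvable Lie group is ``as central as possible''. The only points needing a little care are the verification that $\Ad(H)$ is genuinely compact in $\GL(\mfg)$ (so that the invariant inner product exists) and the correct invocation of Lie's theorem to get nilpotency of $\ad_X$ on all of $\mfg$, not merely on $[\mfg,\mfg]$, for $X\in[\mfg,\mfg]$.
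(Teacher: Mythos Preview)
Your argument is correct and follows essentially the same route as the paper: both proofs average to get an $\Ad(H)$-invariant inner product, use skew-symmetry of $\ad_X$ for $X\in\mfh$, and then combine this with nilpotency of $\ad_X$ for $X\in[\mfg,\mfg]$ (respectively, reality of the eigenvalues in the split-solvable case) to force $\ad_X=0$. The only cosmetic difference is that the paper first notes $\mfh$ is abelian (since the identity component of $H$ is a torus in a solvable group) and then works on the complement $\mfh^\perp$, whereas you bypass this by observing directly that a skew-symmetric operator is semisimple, so ``semisimple and nilpotent'' (resp.\ ``semisimple with eigenvalues both real and purely imaginary'') already gives $\ad_X=0$ on all of $\mfg$.
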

\begin{proof}
As a compact connected subgroup of a solvable Lie group, the identity component of $H$ is a torus, and hence $\mfh$ is Abelian. Moreover, because $H$ is compact, there exists an $H$-invariant inner product $\langle\cdot,\cdot\rangle$ on $\mfg$. Let $X\in \mfh$. Then the orthogonal complement $V=\mfh^\perp$ is invariant under $\ad_X$ and $\ad_X$ is skew-symmetric on $V$. If $X\in [\mfg,\mfg]$, $\ad_X$ is a nilpotent endomorphism as $[\mfg,\mfg]$ is nilpotent and so $\ad_X:V\to V$ has to be the zero endomorphism. If $G$ is split solvable, all eigenvalues of $\ad_X$ are real and again $\ad_X:V\to V$ equals the zero endomorphism. So, in both cases, $X$ is contained in the center of $\mfg$ as $\mfh$ was already shown to be Abelian. 
\end{proof}

\begin{rem} Notice that a stabilizer of a proper isometric action of a solvable
Lie group is not necessarily central.  Consider for example the action of the special Euclidean group $\RR^2\rtimes \SO(2)$ on $\RR^2$. 
%Then $\SO(2)$ is a compact subgroup but the center of the associated Lie algebra is trivial.
\end{rem}

Finally we prove a proposition on the extension of a proper isometric Lie group action on an open and dense subset of a complete Riemannian manifold $M$ to all of $M$.

\begin{prop}\label{prop:extensionprop} Let $M$ be a complete Riemannian manifold, and $U$ be an open and dense subset which satisfies the property that the Riemannian distance on $M$, restricted to $U$, coincides with the Riemannian distance of $U$. Let $ G$ be a Lie group acting properly and isometrically on $U$. Then the $ G$-action extends in a unique way to an isometric $ G$-action on $M$, and the extended action is again proper.
\end{prop}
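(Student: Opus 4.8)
The plan is to exhibit $(M,d_M)$ as the metric completion of $U$ --- this is where the distance hypothesis enters essentially --- to extend each isometry of $U$ to one of $M$, and then to transfer properness through $\Isom(M)$, which is a Lie group acting smoothly and properly on $M$ (Myers--Steenrod). First I would note that, by Hopf--Rinow, $(M,d_M)$ is a complete metric space, and that the hypothesis says exactly that $U\hookrightarrow M$ is an isometric embedding of metric spaces with dense image; hence $(M,d_M)$ is canonically isometric to the completion of $(U,d_U)$. Since isometries are uniformly continuous, every $\phi\in\Isom(U)$ extends uniquely to a map $M\to M$; applying this to $\phi^{-1}$ and to compositions shows the extension is again a bijective isometry of $(M,d_M)$, i.e.\ a Riemannian isometry of $M$. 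This gives a homomorphism $\iota\colon\Isom(U)\to\Isom(M)$.

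Next I would check that $\iota$ is a homeomorphism onto the \emph{closed} subgroup $\Isom(M)_U:=\{f\in\Isom(M)\mid f(U)=U\}$. That this subgroup is closed follows because for a bijection $f(U)=U$ is equivalent to $f(M\sm U)=M\sm U$, and $\{f\mid f(C)\subseteq C\}$ is closed for any closed $C$. Surjectivity is clear: $f\in\Isom(M)_U$ restricts to an isometry of $U$ whose extension is $f$. The one point requiring work is continuity of $\iota$: given compact $L\subseteq M$ and $\varepsilon>0$, density of $U$ lets one cover $L$ by finitely many balls $B(q_i,\varepsilon)$ with $q_i\in U$; convergence $\phi_n\to\phi$ in $\Isom(U)$ forces $d_M(\phi_n(q_i),\phi(q_i))\to0$ (here $d_M|_U=d_U$ is used), and since the extensions are $d_M$-isometries, the triangle inequality yields $\sup_{y\in L}d_M(\iota(\phi_n)(y),\iota(\phi)(y))\le3\varepsilon$ for $n$ large. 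Continuity of $\iota^{-1}$ is immediate, as compact subsets of $U$ are compact in $M$. Thus $\Isom(U)$ is identified with a closed, hence embedded Lie, subgroup of $\Isom(M)$.

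Now I would feed in the hypotheses on the $G$-action. It is a continuous homomorphism $\Phi_U\colon G\to\Isom(U)$; its kernel $N=\bigcap_{x\in U}G_x$ is a closed subgroup of the compact group $G_{x_0}$, hence compact, and its image is closed in $\Isom(U)$: if $\Phi_U(g_n)\to f$, choosing $x_0\in U$ and a compact neighbourhood $K$ of $\{x_0,f(x_0)\}$ in $U$ gives $g_n\in\{g\in G\mid gK\cap K\neq\emptyset\}$ for $n$ large, a relatively compact subset of $G$ by properness, so along a subsequence $g_{n_k}\to g_\infty$ and $f=\Phi_U(g_\infty)$. Put $\Phi_M:=\iota\circ\Phi_U\colon G\to\Isom(M)$: a continuous homomorphism of Lie groups, hence smooth, with compact kernel $N$ and closed image $\bar G$ (closed inside the closed subgroup $\Isom(M)_U$). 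The extended action $g\cdot p:=\Phi_M(g)(p)$ is then smooth (it is $\Phi_M\times\id$ followed by the smooth $\Isom(M)$-action), isometric, and restricts on $U$ to the given action since $\iota(\phi)|_U=\phi$; uniqueness holds because two isometric extensions must agree on the dense set $U$.

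For properness, the open mapping theorem for $\sigma$-compact locally compact groups makes $\Phi_M\colon G\to\bar G$ open, so $\bar G\cong G/N$ and, since $N$ is compact, $\Phi_M$ is a proper map (cover a compact $C\subseteq\bar G$ by finitely many translates of $\Phi_M(V)$ for a compact neighbourhood $V$ of $e$, so $\Phi_M^{-1}(C)$ lies in a finite union of compact sets $g_iVN$). The $\bar G$-action on $M$ is proper as the restriction of the proper $\Isom(M)$-action to a closed subgroup, and $(g,p)\mapsto(g\cdot p,p)$ factors as $G\times M\xrightarrow{\Phi_M\times\id}\bar G\times M\to M\times M$ with both maps proper; hence the extended $G$-action is proper. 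The step I expect to be the main obstacle is the continuity of $\iota$ in the second paragraph: it is the only place the hypothesis $d_M|_U=d_U$ is genuinely needed, and it is precisely what makes the strategy go through --- without it an isometry of $U$ need not even be uniformly continuous for $d_M$. Everything afterwards is standard structure theory of proper isometric actions and of isometry groups.
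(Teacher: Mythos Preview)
Your argument is correct and follows the same overall strategy as the paper: extend each isometry of $U$ to one of $M$ via the metric completion, obtain a continuous (hence smooth) homomorphism $\Isom(U)\to\Isom(M)$, and deduce properness by showing the image of $G$ is closed in $\Isom(M)$. The difference lies in how that closedness is established. The paper argues that $G$-orbits in $U$ are also closed in $M$ (invoking the slice theorem), so that any limit in $\Isom(M)$ of elements of $G$ must preserve $U$ and hence, by properness on $U$, lie in $G$. You instead prove that $\iota$ identifies $\Isom(U)$ with the closed subgroup $\{f\in\Isom(M):f(U)=U\}$ and that the image of $G$ in $\Isom(U)$ is already closed, directly from properness on $U$; you then factor through the compact kernel and use the proper $\Isom(M)$-action on $M$. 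Your route is more structural, treats non-effective actions explicitly, and sidesteps the orbit-closure step (whose passage from ``closed in $U$'' to ``closed in $M$'' is not made fully explicit in the paper). One minor point: the hypothesis $d_M|_U=d_U$ is already needed for the extension of isometries (so that a $d_U$-isometry is $d_M$-uniformly continuous and extends to the completion), not only for the continuity of $\iota$ as you suggest in your closing remark.
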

\begin{proof} 
We first show that any isometry of $U$ can be extended in a unique way
to an isometry of $M$. The uniqueness is clear as $U$ is dense
  in $M$.%any isometry is uniquely determined by the value and the
         %derivative at one point

So let $\varphi\in I(U)$ be an isometry of $U$. Let $p\in M$. As $U$ is dense in $M$, we can find a sequence $p_n$ in $U$ converging to $p$. Since $d_U$ equals the restriction of $d_M$ to $U$ and $\varphi$ is an isometry of $U$, the sequence $(\varphi(p_n))_n$ is a Cauchy sequence in $M$ and so has a limit $q\in M$ because $M$ is complete. We set $\varphi(p):=q$. This gives a well-defined map $\varphi:M\to M$, because for any other sequence $p_n'$ in $U$ converging to $p$, we have $d_U(\varphi(p_n),\varphi(p_n'))=d_U(p_n,p_n')\to 0$, again by the assumption on the two metrics on $U$.

Clearly, $\varphi:M\to M$ is a bijection, because the extension of $\varphi^{-1}$ to a map on $M$ defines an inverse to $\varphi$. To show that $\varphi$ is an isometry, it therefore suffices to show that $\varphi$ is a distance-preserving map \cite[Theorem I.11.1]{Helgason}. But if $p=\lim p_n$ and $p'=\lim p_n'$ for sequences $p_n,p_n'$ in $U$, then $d_M(\varphi(p),\varphi(p')) = \lim_{n\rightarrow \infty} d_U(\varphi(p_n),\varphi(p_n'))=\lim_{n\rightarrow \infty} d_U(p_n,p_n') = d_M(p,p')$.

By the uniqueness of the extension, it follows that we obtain a well-defined group homomorphism $f:I(U)\rightarrow I(M)$ of Lie groups. As convergence in the compact-open topology is for isometries equivalent to pointwise convergence, cf, e.g., \cite[Section I.4]{KN}, and $U$ is dense in $M$, the homomorphism is continuous and hence smooth, cf. \cite[Theorem 3.39]{Warner}. Thus, our $G$-action $G\to I(U)$ induces, by composition with $f$, a well-defined smooth $G$-action on $M$.

To show that the $G$-action on $M$ is also proper, we need to show that $G$ is closed in the isometry group of $M$ \cite[Theorem 4]{DiazRamos}. For that we note that by properness of the $G$-action on $U$, the $G$-orbits are closed in $U$, and by the slice theorem, also closed in $M$. Thus, if a sequence $g_n$ in $G$ converges as isometries of $M$, then the limit isometry leaves invariant $U$; thus, $g_n$ also converges in $I(U)$, and by properness of the $G$-action on $U$, $g_n$ converges in $G$.
\end{proof}

\begin{rem} An easy example of an action on an open and dense subset which does not extend to the whole manifold is given as follows: Let $M$ be the infinite M\"obius strip $(\RR\times [0,1])/_\sim$, with the boundaries identified via $(t,0)\sim (-t,1)$, and $U=\RR\times (0,1)$. Then the Lie group $\RR$ acts properly on $U$ by translation in the $\RR$-direction, but this action does not extend to $M$.
\end{rem}

\section{Non-existence of singular orbits}\label{sec:singorbits}
In this section, we prove that for suitable classes of split-solvable Lie groups $G$ any proper cohomogeneity-one action of $G$ on a, not necessarily complete, Riemannian manifold $M$ preserving a parallel $\SU(3)$-, $\G_2$- or $\Spin(7)$-structure on $M$, respectively, has only regular orbits. In the $\SU(3)$-case, our result generalizes \cite[Theorem 25]{Conti2}.

We begin by recalling some basic definitions needed in this section.
\begin{defn}
\begin{itemize}
\item
For a Lie algebra $\mfg$, the \emph{ascending central series} $\mfg_k$, $k\in \NN_0$, is recursively defined by
\begin{equation*}
\mfg_{(0)}=\{0\},\quad \mfg_{(k+1)}:=\left\{X\in \mfg\mid [X,\mfg]\subseteq \mfg_{(k)}\right\}
\end{equation*}
for all $k\in \NN_0$. Note that $\mfg_{(1)}$ equals the center $\mathfrak{z}(\mfg)$ of $\mfg$ and that $\mfg$ is nilpotent precisely if there exists some $k\in \NN_0$ with $\mfg_{(k)}=\mfg$.
\item
For an action of a Lie group $ G$ on a manifold $M$ and an element $X$ in the associated Lie algebra $\mfg$, we denote in the following by $\overline{X}$ the induced fundamental vector field on $M$ defined in such a way that $\mfg \to \mathfrak{X}(M)$, $X\mapsto 
\overline{X}$, is an anti-homomorphism of Lie algebras. 
\end{itemize}
\end{defn}
Moreover, we will need the following formula for the value of the exterior derivative of an invariant differential form on fundamental vector fields.
\begin{lemma}\label{lem:dinvariantforms}
Let $M$ be a manifold with an action of a Lie group $ G$ and let $\omega\in \Omega^k M$ be a $ G$-invariant $k$-form on $M$. Then
\begin{equation}\label{eq:differential}
d\omega(\overline{X_0},\ldots,\overline{X_k})=\sum_{0\leq i<j\leq k} (-1)^{i+j}\omega\left(\overline{[X_i,X_j]},\overline{X_0},\ldots,\widehat{\overline{X_i}},\ldots,\widehat{\overline{X_j}},\ldots,\overline{X_k}\right).
\end{equation}
for all $X_0,\ldots, X_k\in \mfg$.
\end{lemma}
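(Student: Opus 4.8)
\textit{Proof proposal.} The plan is to evaluate the familiar coordinate-free formula for the exterior derivative on fundamental vector fields and to simplify it using $G$-invariance. Recall that for arbitrary vector fields $Y_0,\ldots,Y_k$ one has
\begin{equation*}
d\omega(Y_0,\ldots,Y_k)=\sum_{i=0}^k(-1)^i Y_i\bigl(\omega(Y_0,\ldots,\widehat{Y_i},\ldots,Y_k)\bigr)+\sum_{0\le i<j\le k}(-1)^{i+j}\omega\bigl([Y_i,Y_j],Y_0,\ldots,\widehat{Y_i},\ldots,\widehat{Y_j},\ldots,Y_k\bigr).
\end{equation*}
Two facts enter when we substitute $Y_i=\overline{X_i}$. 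First, $\overline{X}$ generates the action of the one-parameter subgroup $t\mapsto\exp(tX)$, so $G$-invariance of $\omega$ yields $\mathcal{L}_{\overline{X}}\omega=0$ for every $X\in\mfg$. Second, since $X\mapsto\overline{X}$ is an anti-homomorphism of Lie algebras, $[\overline{X_i},\overline{X_j}]=-\overline{[X_i,X_j]}$.

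Next I would treat the two sums separately. For the ``bracket'' sum, just substitute $[\overline{X_i},\overline{X_j}]=-\overline{[X_i,X_j]}$; it becomes $-\sum_{i<j}(-1)^{i+j}\omega(\overline{[X_i,X_j]},\overline{X_0},\ldots,\widehat{\overline{X_i}},\ldots,\widehat{\overline{X_j}},\ldots,\overline{X_k})$. For the ``derivative'' sum, note that $\overline{X_i}$ applied to a function is its Lie derivative; by the Leibniz rule for $\mathcal{L}_{\overline{X_i}}$ together with $\mathcal{L}_{\overline{X_i}}\omega=0$,
\begin{equation*}
\overline{X_i}\bigl(\omega(\overline{X_0},\ldots,\widehat{\overline{X_i}},\ldots,\overline{X_k})\bigr)=\sum_{l\ne i}\omega\bigl(\overline{X_0},\ldots,[\overline{X_i},\overline{X_l}],\ldots,\widehat{\overline{X_i}},\ldots,\overline{X_k}\bigr)=-\sum_{l\ne i}\omega\bigl(\overline{X_0},\ldots,\overline{[X_i,X_l]},\ldots,\widehat{\overline{X_i}},\ldots,\overline{X_k}\bigr),
\end{equation*}
with $[\overline{X_i},\overline{X_l}]$, resp.\ $\overline{[X_i,X_l]}$, standing in the $l$-th slot. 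After these substitutions every summand on the right-hand side has the form $\pm\,\omega(\ldots)$ with exactly one entry the fundamental field of a bracket $[X_a,X_b]$, and only their collection remains.

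The collection is the one fiddly point, and it is pure sign bookkeeping. A fixed pair $a<b$ occurs in the reorganized derivative sum twice: from $(i,l)=(a,b)$, with $\overline{[X_a,X_b]}$ in slot $b$ and slot $a$ deleted, and from $(i,l)=(b,a)$, with $\overline{[X_b,X_a]}=-\overline{[X_a,X_b]}$ in slot $a$ and slot $b$ deleted. Moving the bracket entry to the front costs the sign $(-1)^{b-1}$ in the first case and $(-1)^a$ in the second, and a short check shows that both contributions equal $(-1)^{a+b}\,\omega\bigl(\overline{[X_a,X_b]},\overline{X_0},\ldots,\widehat{\overline{X_a}},\ldots,\widehat{\overline{X_b}},\ldots,\overline{X_k}\bigr)$. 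Hence the derivative sum equals $2\sum_{a<b}(-1)^{a+b}\omega(\overline{[X_a,X_b]},\ldots)$, the bracket sum equals $-\sum_{a<b}(-1)^{a+b}\omega(\overline{[X_a,X_b]},\ldots)$, and adding them gives exactly \eqref{eq:differential}. (For orientation: at $k=1$, $\overline{X_0}(\omega(\overline{X_1}))=\omega([\overline{X_0},\overline{X_1}])$ and $\overline{X_1}(\omega(\overline{X_0}))=-\omega([\overline{X_0},\overline{X_1}])$, so $d\omega(\overline{X_0},\overline{X_1})=2\omega([\overline{X_0},\overline{X_1}])-\omega([\overline{X_0},\overline{X_1}])=-\omega(\overline{[X_0,X_1]})$, matching the $(i,j)=(0,1)$ term.) Thus the main obstacle is not conceptual but simply keeping the combinatorial signs straight in the general case.
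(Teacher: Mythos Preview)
Your argument is correct and follows essentially the same route as the paper's proof: both start from Cartan's formula for $d\omega$, use $\mathcal{L}_{\overline{X_i}}\omega=0$ to rewrite each term $\overline{X_i}(\omega(\ldots))$ as a sum over brackets, and then combine with the bracket sum. The only cosmetic difference is that the paper keeps the expressions in terms of $[\overline{X_i},\overline{X_j}]$ until the end and observes a direct cancellation between the bracket sum and half of the expanded derivative sum, whereas you substitute the anti-homomorphism $[\overline{X_i},\overline{X_j}]=-\overline{[X_i,X_j]}$ earlier and arrive at the same result via $2-1=1$; the underlying combinatorics are identical.
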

\begin{proof}
Since $ G$ preserves $\omega$, we have $\mathcal{L}_{\overline{X_i}}\omega=0$ for all $i\in \{0,\ldots,k\}$. Hence,
\begin{equation*}
\begin{split}
\overline{X_i}(\omega(\overline{X_0},\ldots, \widehat{\overline{X_i}},\ldots,\overline{X_k}))&=\sum_{j=0}^{i-1} (-1)^j \omega\left([\overline{X_i},\overline{X_j}],\overline{X_0},\ldots,\widehat{\overline{X_j}},\ldots,\widehat{\overline{X_i}},\ldots,\overline{X_k}\right)\\
+&\sum_{j=i+1}^k (-1)^{j-1}\omega\left([\overline{X_i},\overline{X_j}],\overline{X_0},\ldots,\widehat{\overline{X_i}},\ldots,\widehat{\overline{X_j}},\ldots,\overline{X_k}\right).
\end{split}
\end{equation*}
Thus, we obtain
\begin{equation*}
\begin{split}
d\omega(\overline{X_0},\ldots,\overline{X_k})=&\sum_{i=0}^k (-1)^i \overline{X_i}(\omega\left(\overline{X_0},\ldots, \widehat{\overline{X_i}},\ldots,\overline{X_k}\right)\\
+&\sum_{0\leq i<j\leq k} (-1)^{i+j} \omega\left([\overline{X_i},\overline{X_j}],\overline{X_0},\ldots, \widehat{\overline{X_i}},\ldots,\widehat{\overline{X_j}},\ldots,\overline{X_k}\right)\\
=&\sum_{0\leq j<i\leq k} (-1)^{i+j}\omega\left([\overline{X_i},\overline{X_j}],\overline{X_0},\ldots,\widehat{\overline{X_j}},\ldots,\widehat{\overline{X_i}},\ldots,\overline{X_k}\right)\\
=&\sum_{0\leq i<j\leq k} (-1)^{i+j}\omega\left(\overline{[X_i,X_j]},\overline{X_0},\ldots,\widehat{\overline{X_i}},\ldots,\widehat{\overline{X_j}},\ldots,\overline{X_k}\right).
\end{split}
\end{equation*}
\end{proof}

The following theorem generalizes \cite[Theorem 25]{Conti2}.

\begin{thm}\label{thm:hyposingorbit} Let $M$ be a six-dimensional manifold with a parallel $\SU(3)$-structure $(\omega,\rho)\in \Omega^2 M\times \Omega^3 M$ preserved by a proper cohomogeneity one action of a five-dimensional Lie group $G$. Then the stabilizer subalgebra $\mfg_p$ of any point $p\in M$
fulfills $\mfg_p\cap \mathfrak{z}(\mfg)=\{0\}$ and if $ G$ is solvable, also $\mfg_p\cap  [\mfg,\mfg]=\{0\}$. In particular, if $ G$ is split-solvable, then all orbits of the action are five-dimensional.
\end{thm}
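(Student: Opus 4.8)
The plan is to argue by contradiction: suppose $\mfg_p \cap \mathfrak{z}(\mfg) \neq \{0\}$, pick a nonzero central vector $Z \in \mfg_p$, and derive a contradiction from the parallelism of $(\omega,\rho)$ together with the cohomogeneity-one assumption. Since $Z \in \mfg_p$, the fundamental vector field $\overline{Z}$ vanishes at $p$. The key structural input is that, because the action has cohomogeneity one on a six-dimensional manifold, the orbit $G\cdot p$ is at most five-dimensional; by Lemma \ref{lem:dimsingorbit} (using solvability where available, but in general the first statement only needs the abstract dimension count once we know $\mfg_p\neq\{0\}$) the tangent spaces $\{\overline{X}_p : X \in \mfg\}$ span either all of $T_pM$ (impossible since $\overline{Z}_p = 0$ with $Z\neq 0$ forces a nontrivial stabilizer) or a five-dimensional subspace. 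So $G\cdot p$ is a regular orbit of dimension five (a hypersurface), $\mfg_p$ is one-dimensional and spanned by $Z$, and the normal line to the orbit at $p$ is spanned by the nonzero vector $\overline{\nabla Z}_p$ (the value at $p$ of the covariant derivative of the Killing field $\overline{Z}$, equivalently the image of the skew-symmetric endomorphism of the orbit-tangent space given by the isotropy action), which is nonzero precisely because $\overline{Z}$ does not vanish identically near $p$.

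Next I would exploit that $Z$ is central to kill most terms in Lemma \ref{lem:dinvariantforms}. For any $X \in \mfg$ we have $[Z,X] = 0$, so for the $G$-invariant closed form $\rho$ (recall $d\rho = 0$ since the structure is parallel, hence in particular half-flat) formula \eqref{eq:differential} applied to $\overline{Z}, \overline{X_1}, \overline{X_2}, \overline{X_3}$ gives, evaluated at $p$ where $\overline{Z}_p = 0$, that all terms with $\overline{Z}_p$ as an explicit argument drop, all terms with $\overline{[Z,X_i]}$ drop by centrality, and the single surviving term involves $\overline{[X_i,X_j]}_p$ paired against $\overline{Z}_p = 0$ — so $0 = d\rho(\overline{Z},\overline{X_1},\overline{X_2},\overline{X_3})_p$ carries no information directly. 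The productive move instead is to use that $\overline{Z}$ is a Killing field vanishing at $p$, so its covariant derivative $A := (\nabla \overline{Z})_p$ is a nonzero skew-symmetric endomorphism of $T_pM$ preserving the parallel tensors: $A$ lies in the isotropy Lie algebra $\mathfrak{su}(3) \subset \mathfrak{so}(T_pM)$, acting trivially on $\omega_p$ and $\rho_p$ in the infinitesimal sense, i.e. $A$ annihilates $\omega_p, \rho_p, \hat\rho_p$. But $A$ also has rank exactly two: its image is the normal line plus... no — $A$ maps the orbit tangent space into the normal direction and vice versa (since the isotropy acts transitively on the normal sphere, which here is $S^0$, so $A$ has a two-dimensional image spanned by the normal vector $n$ and one orbit-tangent vector). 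So $A$ is a rank-two element of $\mathfrak{su}(3)$.

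Here is where the contradiction must come from, and this is the main obstacle: I want to show a rank-two element of $\mathfrak{su}(3)$ cannot annihilate both $\omega_p$ and $\rho_p$ while having the normal vector $n$ in its image. Using Lemma \ref{lem:algprop}(b): a rank-two skew endomorphism $A$ annihilating $\rho_p$ has a four-dimensional kernel $W$; writing $A = \xi \wedge n$ roughly (image spanned by $\xi \in W^\perp\cap(\text{orbit})$ and $n$), the condition $A \cdot \rho_p = 0$ forces $\rho_p(n, \xi, \cdot) = 0$ together with a compatibility on $\hat\rho_p$. By Lemma \ref{lem:algprop}(b) this means $n$ and $\xi$ are $J_\rho$-complex linearly dependent, i.e. $\xi = \pm J_\rho n$ up to scale. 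But then $A$ is (a multiple of) $n \wedge J_\rho n$, and one checks directly that $n \wedge J_\rho n$ does \emph{not} annihilate $\omega_p = g(J_\rho\cdot,\cdot)$: indeed $\omega_p(A n, \cdot) + \omega_p(n, A\cdot) $ picks up $\omega_p(J_\rho n, \cdot)\cdot|n|^2 \neq 0$ when paired with $n$. This contradicts $A \in \mathfrak{su}(3)$. Hence $\mfg_p \cap \mathfrak{z}(\mfg) = \{0\}$. The statement $\mfg_p \cap [\mfg,\mfg] = \{0\}$ for solvable $G$ then follows immediately from Lemma \ref{lem:isocenter}(i), which gives $\mfg_p \cap [\mfg,\mfg] \subseteq \mathfrak{z}(\mfg)$, intersected with the just-proved fact; and for split-solvable $G$, Lemma \ref{lem:isocenter}(ii) gives $\mfg_p \subseteq \mathfrak{z}(\mfg)$, so $\mfg_p = \mfg_p \cap \mathfrak{z}(\mfg) = \{0\}$ and every orbit is five-dimensional. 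I expect the delicate point to be organizing the rank and image structure of $A$ correctly — in particular verifying that $n$ genuinely lies in the image of $A$ (equivalently that the isotropy representation is nontrivial on the normal space), which is exactly the content of the slice-theorem remark that nontrivial isotropy of a cohomogeneity-one action is transitive on the normal sphere, forcing $A \neq 0$ with $n \in \operatorname{im} A$.
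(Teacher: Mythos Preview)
Your proposal has a genuine dimensional error that derails the argument. If $0 \neq Z \in \mfg_p$, then $\dim \mfg_p \geq 1$, so the orbit $G\cdot p$ has dimension $5 - \dim\mfg_p \leq 4$: it is a \emph{singular} orbit with normal space of dimension at least two (exactly two when $G$ is solvable, by Lemma~\ref{lem:dimsingorbit}). Your assertion that $G\cdot p$ is a five-dimensional regular hypersurface with one-dimensional normal and normal sphere $S^0$ is internally inconsistent---a five-dimensional orbit of a five-dimensional group has discrete isotropy---and the subsequent description of $A = (\nabla\overline{Z})_p$ is wrong on two counts. First, the infinitesimal isotropy $A$ preserves the tangent/normal splitting of $T_pM$ (the isotropy group maps the orbit to itself); it does not interchange them. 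Second, and more importantly, you never exploit centrality where it actually bites: from $[\overline{Z},\overline{X}] = -\overline{[Z,X]} = 0$ and $\overline{Z}_p = 0$ one gets $A(\overline{X}_p) = 0$ for every $X\in\mfg$, so $A$ annihilates the whole orbit-tangent space. With the correct four-dimensional orbit this forces $A$ to be a nonzero rank-two element of $\mathfrak{su}(3)$, hence a $J_\rho$-linear skew-Hermitian endomorphism of complex rank one, whose complex trace cannot vanish---a clean contradiction, but not the one you attempted via Lemma~\ref{lem:algprop}(b) and an ad hoc pairing with $\omega_p$.

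The paper's proof follows an entirely different route: it never argues pointwise at $p$ but works on the nearby regular orbits. One takes a unit-speed normal geodesic $\gamma:(a,b]\to M$ ending at $p=\gamma(b)$ and considers the induced family of hypo $\SU(2)$-structures $(\alpha_t,(\omega_1)_t,(\omega_2)_t,(\omega_3)_t)$ on $G\cdot\gamma(t)$ from Proposition~\ref{prop:hypoflow}. Centrality of the element (called $X$ there) together with the flow equation $\tfrac{d}{dt}(\omega_1)_t = -d\alpha_t$ and Lemma~\ref{lem:dinvariantforms} shows that $(\omega_1)_t(\overline{X},\overline{Y})(\gamma(t))$ is constant in $t$; since $\overline{X}_p = 0$, letting $t\to b$ forces this constant to be zero. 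Hence $\overline{X}_{\gamma(t)}$ lies in the common one-dimensional kernel of the $(\omega_i)_t$ (Lemma~\ref{lem:algprop}(a)). Running the same constancy-then-limit argument with the second flow equation gives $(\alpha_t\wedge(\omega_2)_t)(\overline{X},\cdot,\cdot)(\gamma(t)) = 0$, which contradicts Lemma~\ref{lem:algprop}(a) because $\alpha_t$ is nonzero on that kernel. The deductions from Lemma~\ref{lem:isocenter} for the solvable and split-solvable cases are as you stated.
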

\begin{proof} We only have to show the first assertion  $\mfg_p\cap \mathfrak{z}(\mfg)=\{0\}$ since the others follow directly from that assertion using Lemma \ref{lem:isocenter}.

To prove this, assume the contrary, i.e. that there exists some point $p\in M$ with $\mfg_p\cap \mathfrak{z}(\mfg)\neq \{0\}$. Let $0\neq X\in \mfg_p\cap \mathfrak{z}(\mfg)$. Then $ G\cdot p$ is a singular orbit and we may fix a normal geodesic $\gamma:(a,b]\to M$ of unit speed with $p=\gamma(b)$ and $\gamma(t)$ being in a regular orbit for all $t\in (a,b)$. Denote by $\left(\alpha_t,\,(\omega_1)_t,\, (\omega_2)_t,\, (\omega_3)_t\right)$ the induced $\SU(2)$-structure on $ G\cdot \gamma(t)$ for $t\in (a,b)$, cf. Proposition \ref{prop:hypoflow}. Since $X$ is contained in the center, the flow equations \eqref{eq:hypoflow} and Lemma \ref{lem:dinvariantforms} give us
\begin{equation*}
\begin{split}
\frac{d}{dt} (\omega_1)_t (\overline{X},\overline{Y})(\gamma(t)) =\, & -d\alpha_t(\overline{X},\overline{Y})(\gamma(t))= \alpha_t\left( \overline{[X,Y]} \right)(\gamma(t))=0.
\end{split}
\end{equation*}
for all $Y\in \mfg$. Thus, $(\omega_1)_t(\overline{X},\overline{Y})$ is constant along the geodesic $\gamma$. Moreover, $\omega= (\omega_1)_t+\alpha_t\wedge dt$ for all $t\in (a,b)$ by Proposition \ref{prop:hypoflow} and so extends also to the singular orbit. Therefore, restricting to $\gamma$, the limit $\lim_{t\to b} \omega(\overline{X},\overline{Y})(\gamma(t))$ exists and is equal to $0$. On the other hand, for all $t\in (a,b)$, $ \omega(\overline{X},\overline{Y})(\gamma(t))= (\omega_1)_t(\overline{X},\overline{Y})(\gamma(t))$. It follows that $\overline{X}_{\gamma(t)}$ is in the kernel of $(\omega_1)_t(\gamma(t))$ for all $t\in (a,b)$.

The kernels of the forms $(\omega_1)_t, (\omega_2)_t$ and $(\omega_3)_t$ at $\gamma(t)$ coincide by Lemma \ref{lem:algprop} (a), so $\overline{X}_{\gamma(t)}$ is also in the kernel of $(\omega_2)_t$ and $(\omega_3)_t$. But then
\begin{equation*}
\begin{split}
d((\omega_3)_t)(\overline{X},\overline Y,\overline Z)(\gamma(t)) =& -(\omega_3)_t(\overline{[X,Y]},\overline{Z})(\gamma(t)) + (\omega_3)_t(\overline{[X,Z]},\overline {Y})(\gamma(t))\\
& - (\omega_3)_t(\overline{[Y,Z]},\overline{X})(\gamma(t))\\ 
=\, &0
\end{split}
\end{equation*}
for all $Y,Z\in \mfg$ since $X$ is in the center and $\overline{X}_{\gamma(t)}$ is in the kernel of $(\omega_3)_t$. Therefore, by the flow equations \eqref{eq:hypoflow}, $(\alpha_t\wedge (\omega_2)_t)(\overline X,\overline Y,\overline Z)$ is constant along $\gamma$.

Now Proposition \ref{prop:hypoflow} gives us $\rho =(\omega_2)_t\wedge \alpha_t-(\omega_3)_t\wedge dt$ on $\bigcup_{t\in (a,b)} G\cdot \gamma(t)$. So the limit $\lim_{t\to b}(\alpha_t\wedge (\omega_2)_t)(\overline X,\overline Y,\overline Z)(\gamma(t))=\lim_{t\to b} \rho(\overline X,\overline Y,\overline Z)(\gamma(t))$ exists and is equal to $0$. Hence we have $(\alpha_t\wedge (\omega_2)_t)(\overline X,\overline Y,\overline Z)(\gamma(t))=0$ for all $Y,Z\in \mfg$ and all $t\in (a,b)$. But this is a contradiction, as $\overline{X}_{\gamma(t)}$ is in the kernel of the $(\omega_2)_t(\gamma(t))$ which has trivial intersection with the kernel of $\alpha_t(\gamma(t))$ by Lemma \ref{lem:algprop} (a).
\end{proof}
For cohomogeneity-one actions on manifolds with parallel $\G_2$-structures, we can show the following theorem.
\begin{thm}\label{thm:hfsingorbit} Let $M$ be a seven-dimensional manifold with a parallel $\G_2$-structure $\varphi\in\Omega^3 M$ preserved by a proper cohomogeneity one action of a six-dimensional Lie group $ G$. If there exists a point $p\in M$ such that $\mfg_p\cap \mathfrak{z}(\mfg)\neq \{0\}$ or if $G$ is solvable and there exists a point $p\in M$ such that 
$\mfg_p\cap [\mfg,\mfg]\neq \{0\}$, then 
$\dim(\mfg_{(2)})=1$. In particular, if $ G$ is split-solvable with $\dim(\mfg_{(2)})\neq 1$ (e.g.\ if $ G$ is nilpotent), then all orbits of the action are six-dimensional.
\end{thm}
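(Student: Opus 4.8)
The approach is to transcribe the proof of Theorem \ref{thm:hyposingorbit} to the present setting, replacing the hypo flow by the Hitchin flow of Proposition \ref{prop:Hitchinflow} and using the algebraic facts recorded in Lemma \ref{lem:algprop} (b) and (c). As in the proof of Theorem \ref{thm:hyposingorbit}, it suffices to prove the single implication: \emph{if there is a point $p\in M$ with $\mfg_p\cap\mathfrak z(\mfg)\neq\{0\}$, then $\dim(\mfg_{(2)})=1$.} Indeed, if $G$ is solvable and $\mfg_p\cap[\mfg,\mfg]\neq\{0\}$, then $\{0\}\neq\mfg_p\cap[\mfg,\mfg]\subseteq\mfg_p\cap\mathfrak z(\mfg)$ by Lemma \ref{lem:isocenter} (i); and if $G$ is split-solvable with $\dim(\mfg_{(2)})\neq 1$, then the implication in contrapositive form gives $\mfg_p\cap\mathfrak z(\mfg)=\{0\}$ for all $p$, so $\mfg_p=\mfg_p\cap\mathfrak z(\mfg)=\{0\}$ by Lemma \ref{lem:isocenter} (ii) and all orbits are six-dimensional.

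So fix $0\neq X\in\mfg_p\cap\mathfrak z(\mfg)$. Since $X\in\mfg_{(1)}=\mathfrak z(\mfg)\subseteq\mfg_{(2)}$ we have $\dim(\mfg_{(2)})\geq 1$, and I will assume $\dim(\mfg_{(2)})\geq 2$ and derive a contradiction. Under this assumption I can choose $W\in\mfg_{(2)}$ with $W\notin\spa{X}$ such that $[W,Y]$ is a scalar multiple of $X$ for every $Y\in\mfg$: if $\dim\mathfrak z(\mfg)\geq 2$, pick $W\in\mathfrak z(\mfg)\setminus\spa{X}$ (so $[W,\mfg]=\{0\}$); otherwise $\mathfrak z(\mfg)=\spa{X}$ and, as $\dim(\mfg_{(2)})\geq 2$, pick $W\in\mfg_{(2)}\setminus\spa{X}$ (so $[W,\mfg]\subseteq\mathfrak z(\mfg)=\spa{X}$). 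Exactly as in Theorem \ref{thm:hyposingorbit} the orbit $G\cdot p$ is singular, and I fix a unit-speed normal geodesic $\gamma\colon(a,b]\to M$ with $\gamma(b)=p$ and $\gamma(t)$ in a regular orbit for $t\in(a,b)$. Such a regular orbit has dimension $\dim M-1=6=\dim G$, so the stabiliser of $\gamma(t)$ is trivial; in particular $Y\mapsto\overline{Y}_{\gamma(t)}$ is injective and $\overline{X}_{\gamma(t)},\overline{W}_{\gamma(t)}$ are linearly independent. Let $(\omega_t,\rho_t)$ be the half-flat $\SU(3)$-structure induced on $G\cdot\gamma(t)$ by Proposition \ref{prop:Hitchinflow}, so that $\varphi=\omega_t\wedge dt+\rho_t$ and $\star_\varphi\varphi=\tfrac{\omega_t^2}{2}-\widehat{\rho_t}\wedge dt$ along the regular orbits crossed by $\gamma$; being restrictions (and contractions with $\dot\gamma$) of the parallel $G$-invariant forms $\varphi$, $\star_\varphi\varphi$ on all of $M$, all the $G$-invariant functions obtained by evaluating them on fundamental vector fields and on $\dot\gamma$ extend continuously across the singular orbit.

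The heart of the argument is a bootstrap along $\gamma$. First, by Lemma \ref{lem:dinvariantforms}, centrality of $X$ and the choice of $W$, every term of $d\omega_t(\overline X,\overline W,\overline Y)(\gamma(t))$ vanishes (the terms with $[X,\cdot]$ because $X$ is central, the term with $[W,Y]$ because $\overline{[W,Y]}_{\gamma(t)}$ is proportional to $\overline X_{\gamma(t)}$ and $\omega_t$ is alternating). Hence, by the first of the flow equations \eqref{eq:Hitchinflow}, the $G$-invariant function $t\mapsto\rho_t(\overline X,\overline W,\overline Y)(\gamma(t))=\varphi(\overline X,\overline W,\overline Y)(\gamma(t))$ is constant in $t$, and its value as $t\to b$ equals $\varphi(\overline X_p,\overline W_p,\overline Y_p)(p)=0$ because $\overline X_p=0$; therefore $\rho_t(\overline X,\overline W,\cdot)(\gamma(t))=0$ for all $t\in(a,b)$. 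By Lemma \ref{lem:algprop} (b) this also gives $\widehat{\rho_t}(\overline X,\overline W,\cdot)(\gamma(t))=0$. Feeding back this identity together with the bracket relations $[X,\mfg]=\{0\}$ and $[W,\mfg]\subseteq\spa{X}$ into Lemma \ref{lem:dinvariantforms}, one checks that every term of $d\widehat{\rho_t}(\overline X,\overline W,\overline Y,\overline Z)(\gamma(t))$ vanishes; so, by the second flow equation in \eqref{eq:Hitchinflow}, the function $t\mapsto\tfrac{\omega_t^2}{2}(\overline X,\overline W,\overline Y,\overline Z)(\gamma(t))=(\star_\varphi\varphi)(\overline X,\overline W,\overline Y,\overline Z)(\gamma(t))$ is constant with limit $0$, hence identically $0$.

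Finally, $(\star_\varphi\varphi)(\overline X,\overline W,\dot\gamma,\overline Y)(\gamma(t))$ equals, up to sign, $\widehat{\rho_t}(\overline X,\overline W,\overline Y)(\gamma(t))=0$, and $(\star_\varphi\varphi)(\overline X,\overline W,\dot\gamma,\dot\gamma)=0$ trivially; since the $\overline{Y}_{\gamma(t)}$ ($Y\in\mfg$) together with $\dot\gamma(t)$ span $T_{\gamma(t)}M$, the $2$-form $(\star_\varphi\varphi)_{\gamma(t)}(\overline{X}_{\gamma(t)},\overline{W}_{\gamma(t)},\cdot,\cdot)$ on the seven-dimensional space $T_{\gamma(t)}M$ vanishes identically. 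This contradicts Lemma \ref{lem:algprop} (c): as $\overline{X}_{\gamma(t)}$ and $\overline{W}_{\gamma(t)}$ are linearly independent, that $2$-form has a three-dimensional kernel and is in particular nonzero. Hence $\dim(\mfg_{(2)})=1$. The step I expect to require the most care is this bootstrap: it is where the special choice of $W$ (pushing all relevant brackets into $\spa{X}$) and the algebraic input of Lemma \ref{lem:algprop} (b) are essential, and where one must not forget the $\dot\gamma$-direction when invoking Lemma \ref{lem:algprop} (c) at the end; the remaining identifications of the $G$-invariant forms along the tube of regular orbits are routine and parallel to Theorem \ref{thm:hyposingorbit}.
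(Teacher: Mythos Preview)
Your proof is correct and follows the paper's argument essentially step for step: the reduction to the case $\mfg_p\cap\mathfrak z(\mfg)\neq\{0\}$, the choice of an auxiliary element in $\mfg_{(2)}$ whose brackets land in $\spa{X}$, and the two-stage bootstrap (first $\rho_t(\overline X,\overline W,\cdot)=0$ via $d\omega_t$ and the first flow equation, then $\tfrac{\omega_t^2}{2}(\overline X,\overline W,\cdot,\cdot)=0$ via $d\hat\rho_t$ and the second flow equation, each time passing to the singular orbit) are identical to the paper's proof.

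The only genuine difference is the final contradiction. The paper stays on the six-dimensional orbit: having established from Lemma~\ref{lem:algprop}\,(b) that $\overline X,\overline Y$ are $J_{\rho_t}$-complex linearly dependent, it chooses $Z,W\in\mfg$ so that $\overline Z,\overline W$ are also $J_{\rho_t}$-complex linearly dependent and the four vectors span a complex $2$-plane, on which $\omega_t^2$ is a nonzero multiple of the metric volume form; this contradicts $\tfrac{\omega_t^2}{2}(\overline X,\overline Y,\overline Z,\overline W)=0$ directly. You instead lift to the ambient seven-manifold, observe that the vanishing of $\tfrac{\omega_t^2}{2}(\overline X,\overline W,\cdot,\cdot)$ on orbit directions together with $\hat\rho_t(\overline X,\overline W,\cdot)=0$ forces the full two-form $(\star_\varphi\varphi)(\overline X,\overline W,\cdot,\cdot)$ on $T_{\gamma(t)}M$ to vanish, and then invoke Lemma~\ref{lem:algprop}\,(c). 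Both endings are clean; yours trades the ad hoc almost-Hermitian computation for the extra input of Lemma~\ref{lem:algprop}\,(c) (which the paper itself uses only in the $\Spin(7)$ case, Theorem~\ref{thm:hf2singorbit}), at the small cost of having to check the $\dot\gamma$-slot separately.
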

\begin{proof}
We only have to show that $\mfg_p\cap \mathfrak{z}(\mfg)\neq \{0\}$ implies $\dim(\mfg_{(2)}))=1$. The other statements follow then by Lemma \ref{lem:isocenter} or since $\mfg_{(k)}=\mfg$ for some $k\in \mathbb{N}$ if $\mfg$ is nilpotent.

For the proof, assume the contrary, i.e.\ that $\dim(\mfg_{(2)})>1$. Let $0\neq X \in \mfg_p\cap \mathfrak{z}(\mfg)$. Then $ G\cdot p$ is a singular orbit and we choose a normal geodesic $\gamma:(a,b]\rightarrow M$ of unit speed with $\gamma(b)=p$ and such that $\gamma(t)$ is in a regular orbit for all $t\in (a,b)$. By Proposition \ref{prop:Hitchinflow}, we get a smooth one-parameter family $(a,b)\ni t\mapsto (\omega_t,\rho_t)$ of half-flat $\SU(3)$-structure on the regular orbits $ G\cdot \gamma(t)$.

Since $\dim(\mfg_{(2)})>1$, we may choose $Y\in \mathfrak{z}(\mfg)=\mfg_{(1)}$ linearly independent of $X$ if $\dim(\mathfrak{z}(\mfg))>1$. Otherwise we choose an arbitrary $Y\in \mfg_{(2)}$ linearly independent of $X$. In both cases, we have $[Y,\mfg]\subseteq \spa{X}$ and so Hitchin's flow equations \eqref{eq:Hitchinflow} give us
\[
\dot \rho_t(\overline X ,\overline Y, \overline Z)(\gamma(t)) = d\omega_t( \overline X ,\overline Y,\overline Z)(\gamma(t))=-\omega_t(\overline{[Y,Z]}, \overline X)=0
\]
for any $Z\in \mfg$. Hence, $\rho_t(\overline X,\overline Y,\overline Z)$ is constant along the normal geodesic $\gamma$. Moreover, by Proposition \ref{prop:Hitchinflow}, we have $\varphi=\omega_t\wedge dt + \rho_t$ on $\bigcup_{t\in (a,b)} G\cdot \gamma(t)$ . Thus, $\lim_{t\rightarrow b}\rho_t(\overline X,\overline Y,\overline Z)(\gamma(t))=\lim_{t\rightarrow b}\varphi(\overline X,\overline Y,\overline Z)(\gamma(t))$ exists and is equal to zero. Hence, $\rho_t(\overline X,\overline Y, \overline Z)(\gamma(t))=0$ for all $t$ and all $Z$ and Lemma \ref{lem:algprop} (b) gives us that $\overline{X}$ and $\overline{Y}$ are complex linearly dependent along $\gamma$ with respect to the complex structure $J_{\rho _t}$ and that $\hat\rho_t(\overline X,\overline Y,\overline Z)(\gamma(t))=0$ for all $t\in (a,b)$ and all $Z\in \mfg$. Thus,
\begin{equation*}
\begin{split}
d\hat\rho_t (\overline X, \overline Y,\overline Z,\overline W)(\gamma(t))=&\,-\hat\rho_t(\overline{[Y,Z]},\overline X,\overline W)+\hat\rho_t(\overline{[Y,W]},\overline X,\overline Z)\\
&\,-\hat\rho_t(\overline{[Z,W]},\overline X,\overline Y)\\
=&\,0
\end{split}
\end{equation*}
for all $Z,\, W\in \mfg$ and all $t\in (a,b)$. Setting $\sigma_t:=\frac{\omega_t^2}{2}$, Hitchin's flow equations \eqref{eq:Hitchinflow} imply that \[\sigma_t(\overline X,\overline Y,\overline Z,\overline W)(\gamma(t))\] is constant along $\gamma$ for fixed $Z,\,W\in \mfg$. As $\star_{\varphi}\varphi=\sigma_t-\widehat{\rho_t}\wedge dt$ on $\bigcup_{t\in (a,b)} G\cdot \gamma(t)$ by Proposition \ref{prop:Hitchinflow}, we conclude, as before, by passing to the singular orbit, that
\[
\sigma_t(\overline X,\overline Y,\overline Z,\overline W)(\gamma(t))=0
\]
for all $Z,\,W\in \mfg$ and all $t\in (a,b)$. But this is impossible: For a fixed $t\in (a,b)$, choose $Z,\,W\in \mfg$ such that $X,\,Y,\,Z,\,W$ are linearly independent but $\overline{Z}_{\gamma(t)},\, \overline{W}_{\gamma(t)}$ are $J_{\rho_t}$-complex linearly dependent. Restricted to $\spa{\overline{X}_{\gamma(t)},\, \overline{Y}_{\gamma(t)},\overline{Z}_{\gamma(t)},\, \overline{W}_{\gamma(t)}}$, $\sigma_t(\gamma(t))=\frac{\omega_t^2}{2}(\gamma(t))$ is some non-zero multiple of the metric volume form as $(g_{(\omega_t,\rho_t)},\linebreak J_{\rho_t},\omega_t)$ is an almost Hermitian structure on $G\cdot\gamma(t)$. Thus, $\sigma_t(\overline X,\overline Y,\overline Z,\overline W)(\gamma(t))\ne 0$, a contradiction.
\end{proof}
We denote by $\mathfrak{n}_{6,5}$ the six-dimensional nilpotent Lie algebra 
with the following non-zero differentials
\begin{equation} \label{n65Eq} de^5=e^{13}+e^{24},\quad de^6=e^{14}-e^{23},\end{equation} 
 where $(e^1,\ldots , e^6)$ is a basis of $\mathfrak{n}_{6,5}^*$. Moreover, we call a semi-direct sum $\mfg=\mfu\rtimes \RR$ \emph{proper} if it is not isomorphic to the Lie algebra direct sum $\mfu\oplus \RR$. Note that then the center of $\mfg$ is contained in $\mfu$ as otherwise $\mfg=\mfu\oplus \RR\cdot X$ as Lie algebras for any element $X$ in the center of $\mfg$ which is not contained in $\mfu$.

\begin{thm}\label{thm:hf2singorbit} Let $M$ be an eight-dimensional manifold with a  parallel $\Spin(7)$-structure $\Phi\in\Omega^4 M$ preserved by a proper cohomogeneity one action of a seven-dimensional split-solvable Lie group $ G$.
\begin{enumerate}
\item[(a)]
If $\mfg=\mathfrak{u}\rtimes \RR$ is a proper semidirect sum with $\mathfrak{u}$ being nilpotent and either $\dim([\mathfrak{u},\mathfrak{u}])\leq 1$ or $\dim([\mathfrak{u},\mathfrak{u}])=2$ and $[\mathfrak{u},\mathfrak{u}]=\mathfrak{z}(\mathfrak{u})$, then all orbits of the action are seven-dimensional.
\item[(b)] If $\mfg=\mathfrak{u}\oplus \RR$ is a direct sum with $\mathfrak{u}\neq \mathfrak{n}_{6,5}$ satisfying the same assumptions as in (a), then all orbits of the action are seven-dimensional.
\item[(c)]
If $G$ is nilpotent and $\mfg_{(k)}\neq 3$ for $k=1,\,2,\, 3$, then all orbits of the action are seven-dimensional.
\end{enumerate}
\end{thm}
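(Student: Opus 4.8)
The plan is to avoid the flow equations entirely and argue at the isotropy representation of a hypothetical singular point. Suppose some orbit $G\cdot p$ is singular. Since $G$ is split-solvable, Lemma \ref{lem:isocenter}(ii) gives $\mfg_p\subseteq\mathfrak{z}(\mfg)$; since $G$ is solvable and acts with cohomogeneity one, Lemma \ref{lem:dimsingorbit} gives $\dim(G\cdot p)=6$, so $\mfg_p$ is one-dimensional and the normal space $\nu_p$ is two-dimensional. The identity component $G_p^0$ is then a circle (a one-dimensional compact connected subgroup of the solvable group $G$), and its Lie algebra $\mfg_p$ is central, so $\Ad(G_p^0)=\id$ and hence $G_p^0$ acts trivially on $T_p(G\cdot p)\cong\mfg/\mfg_p$. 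On the other hand, by the slice theorem $G_p$ acts transitively on the unit circle $S(\nu_p)$, hence so does the finite-index connected subgroup $G_p^0$. Writing $T_pM=V_6\oplus V_2$ for the orthogonal splitting with $V_6=T_p(G\cdot p)$ and $V_2=\nu_p$, the circle $G_p^0$ therefore acts as $\id_{V_6}\oplus(\text{rotations of }V_2)$.

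Next I would exploit $G_p^0$-invariance of $\Phi_p$. In the decomposition $\Lambda^4(V_6\oplus V_2)^*=\bigoplus_{k=0}^{2}\Lambda^{4-k}V_6^*\otimes\Lambda^kV_2^*$ the rotation group of the plane $V_2$ has no nonzero invariant in $V_2^*$, hence none in $\Lambda^1V_2^*$, while it fixes the area form $\nu\in\Lambda^2V_2^*$; so invariance forces $\Phi_p=\Phi_6+\omega\wedge\nu$ with $\Phi_6\in\Lambda^4V_6^*$ and $\omega\in\Lambda^2V_6^*$. For a unit vector $v\in V_2$ this gives $\iota_v\Phi_6=0$ and $\iota_v(\omega\wedge\nu)=\omega\wedge\iota_v\nu$ with $\iota_v\nu\in V_2^*$, so $\iota_v\Phi_p$ vanishes identically on $V_6$. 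But $\Phi_p$ is a $\Spin(7)$-form, so $\iota_v\Phi_p$ is a $\G_2$-form on the seven-dimensional space $v^\perp\supseteq V_6$ — this is exactly the hypersurface construction underlying Proposition \ref{prop:Hitchinflow2} — and a $\G_2$-three-form restricted to a hyperplane of its ambient space is non-zero (by transitivity of $\G_2$ on $S^6$ and the explicit value of the standard $\varphi$ on $e_7^\perp$; it is also a consequence of Lemma \ref{lem:algprop}(c)). Since $V_6$ is a hyperplane of $v^\perp$, this is a contradiction, and no orbit is singular, which proves (a), (b) and (c) at once. I note that this argument uses neither the flow equations nor the structural hypotheses (a)--(c) and in fact proves more: for \emph{every} seven-dimensional split-solvable $G$ acting properly and isometrically with cohomogeneity one on an eight-manifold with a parallel $\Spin(7)$-structure, all orbits are principal; one presumably phrases the statement as in (a)--(c) only to parallel Theorems \ref{thm:hyposingorbit} and \ref{thm:hfsingorbit}.

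Should the isotropy argument turn out to be incomplete, the fallback is the Conti-style flow-equation route, paralleling the proofs of Theorems \ref{thm:hyposingorbit} and \ref{thm:hfsingorbit}. Take $0\neq X\in\mfg_p\subseteq\mathfrak{z}(\mfg)$, a normal geodesic $\gamma\colon(a,b]\to M$ with $\gamma(b)=p$ and $\gamma$ regular on $(a,b)$, and the induced family $\varphi_t$ of cocalibrated $\G_2$-structures on the regular orbits with $\Phi=dt\wedge\varphi_t+\star_{\varphi_t}\varphi_t$ (Proposition \ref{prop:Hitchinflow2}). Using \eqref{eq:Hitchinflow2} and Lemma \ref{lem:dinvariantforms}, one checks that whenever $Y\notin\spa{X}$ satisfies $[Y,\mfg]\subseteq\spa{X}$ and $W$ satisfies $[W,\mfg]\subseteq\spa{X,Y}$, the function $t\mapsto(\star_{\varphi_t}\varphi_t)(\overline X,\overline Y,\overline W,\overline Z)(\gamma(t))$ is constant for every $Z\in\mfg$ — the three bracket terms containing $[X,\cdot]$ vanish because $X$ is central, and each of the remaining three is $\varphi_t$ evaluated on a triple one of whose entries lies in the span of the other two — and its limit as $t\to b$ is $0$ because $\star_{\varphi_t}\varphi_t$ agrees with the smooth form $\Phi$ on orbit-tangent vectors while $\overline X_p=0$. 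Hence $\{W\in\mfg:[W,\mfg]\subseteq\spa{X,Y}\}$ maps into the kernel of the two-form $(\star_{\varphi_t}\varphi_t)(\overline X,\overline Y,\cdot,\cdot)$, which is three-dimensional by Lemma \ref{lem:algprop}(c), so by injectivity of the orbit map at regular points $\dim\{W:[W,\mfg]\subseteq\spa{X,Y}\}\le3$. The remaining task, and on this route the main obstacle, is to see that (a), (b), (c) each produce a $Y$ violating this: if $\dim\mathfrak{z}(\mfg)\ge4$ pick $Y\in\mathfrak{z}(\mfg)$; if the $2$-step algebra $[\mfg,\mfg]$ fits into a central plane through $X$ (in particular if $\dim[\mfg,\mfg]\le1$, or $\dim[\mfg,\mfg]=2$ with $X\in[\mfg,\mfg]=\mathfrak{z}(\mfg)$) take that plane as $\spa{X,Y}$, forcing all of $\mfg$ into the kernel; otherwise use that $\{W:[W,\mfg]\subseteq\spa{X}\}=\mfg_{(2)}$ when $\dim\mathfrak{z}(\mfg)=1$, together with the fact that $\dim\mfg_{(k)}\neq3$ (respectively $\mfu\neq\mathfrak{n}_{6,5}$) forces one of $\mfg_{(1)},\mfg_{(2)},\mfg_{(3)}$ to be at least four-dimensional, and feed the corresponding $\mfg_{(k)}$ into the kernel. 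Pinning down exactly why $\mathfrak{n}_{6,5}$ and the dimension $3$ are the genuine exceptions for this method is the part I expect to require the most bookkeeping.
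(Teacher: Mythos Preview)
Your isotropy argument is correct and actually proves more than the theorem states: for \emph{every} seven-dimensional split-solvable $G$ acting properly with cohomogeneity one and preserving a parallel $\Spin(7)$-structure, there are no singular orbits, with no need for hypotheses (a)--(c). The chain is sound: $\mfg_p$ is central, hence $\Ad(G_p^0)=\id$ on $\mfg$ by connectedness, hence $G_p^0$ acts trivially on $T_p(G\cdot p)$; it acts nontrivially on $\nu_p$ (otherwise $G_p^0$ would fix all of $T_pM$ and hence lie in the discrete ineffectivity kernel of an almost effective action); invariance of $\Phi_p$ then forces $\Phi_p=\Phi_6+\omega\wedge\nu$; so $(\iota_v\Phi_p)|_{V_6}=0$ for unit $v\in\nu_p$, contradicting the fact that a $\G_2$-three-form never vanishes on a hyperplane. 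For the last step your first justification (transitivity of $\G_2$ on $S^6$ together with the explicit restriction of the model $\varphi$ to $e_7^\perp$) is the clean one; Lemma~\ref{lem:algprop}(c) concerns $\star_\varphi\varphi$ rather than $\varphi$ and is not quite the right citation.

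The paper instead follows essentially your fallback route: it works along a normal geodesic, uses \eqref{eq:Hitchinflow2} and Lemma~\ref{lem:dinvariantforms} to show that suitable components of $\star_{\varphi_t}\varphi_t$ are constant in $t$, forces them to zero at the singular endpoint, and then obtains a contradiction either from Lemma~\ref{lem:algprop}(c) (part (c)) or from a direct rank argument on a six-dimensional subspace (part (a)). The structural hypotheses (a)--(c) are exactly what this method needs in order to manufacture enough Lie algebra elements with the right bracket relations; part (b) is even reduced to (a) and (c) by invoking the classification of six-dimensional nilpotent Lie algebras. Your pointwise argument sidesteps the flow entirely and makes the hypotheses (a)--(c) superfluous, so the two approaches genuinely differ: the paper's is computational and case-based, yours is more conceptual and strictly more general. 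Your fallback sketch matches the paper's strategy closely, though the paper organises the case analysis slightly differently (in (a) it restricts $Z,W$ to $\mfu$ rather than imposing bracket conditions on $W$); in any case the fallback is unnecessary given that your first argument goes through.
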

\begin{proof} We first prove (a) and (c). 
In both cases, assume that there is a singular orbit and let $p$ a point in this singular orbit. Choose a normal geodesic $\gamma:(a,b]\rightarrow M$ of unit speed such that $\gamma(b)=p$ and $\gamma(t)$ is in a regular orbit for all $t\in (a,b)$. By Proposition \ref{prop:Hitchinflow2}, we have a smooth one-parameter family $(a,b)\ni t\mapsto \varphi_t$ of cocalibrated $\G_2$-structures on the regular orbits $ G\cdot \gamma(t)$. By Lemma \ref{lem:dimsingorbit} and Lemma \ref{lem:isocenter}, $\dim(\mfg_p)=1$ and $\mfg_p$ is central in $\mfg$.
\begin{enumerate}
\item[(a)]
Since the semi-direct sum $\mfg=\mfu\rtimes \RR$ is proper, we have $\mfg_p\subseteq \mfu$. Set $V:=[\mfu,\mfu]+\mfg_p$. Then $1\leq \dim(V)\leq 2$ and $V\subseteq \mathfrak{z}(\mfu)$. Choose $0\neq X \in \mfg_p$. If $\dim(V)=1$, then choose an arbitrary $Y\in \mfu$ linearly independent of $X$. Otherwise, choose $Y\in V$ linearly independent of $X$. In both cases, $[X,\mfu]=\{0\}$, $[Y,\mfu]\subseteq \spa{X}$ and $[\mfu,\mfu]\subseteq \spa{X,Y}$. Consequently, using the flow
equation \eqref{eq:Hitchinflow2} and Lemma \ref{lem:dinvariantforms},
\begin{equation*}
\begin{split}
\left(\star_{\varphi_t} \varphi_t\right)'(\overline X,\overline Y,\overline Z, \overline W)(\gamma(t))=\,& d\varphi_t(\overline X,\overline Y,\overline Z, \overline W)(\gamma(t))=0
\end{split}
\end{equation*}
for all $Z,\, W\in  \mfu$. This shows that $\star_{\varphi_t} \varphi_t(\overline X,\overline Y,\overline Z, \overline W)$ is constant along $\gamma(t)$. Going to the boundary, using that
\begin{equation*}
\star_{\varphi_t} \varphi_t(\overline X,\overline Y,\overline Z, \overline W)(\gamma(t))=\Phi(\overline X,\overline Y,\overline Z, \overline W)(\gamma(t)))
\end{equation*}
by Proposition \ref{prop:Hitchinflow2} for all $t\in (a,b)$, we see that the constant has to be zero. Fix now $t\in (a,b)$. Then there exists a basis $v_1,\ldots,v_7$ of $T_{\gamma(t)} G\gamma(t)$ such that in the dual basis
\begin{equation*}
\star_{\varphi_t} \varphi_t(\gamma(t))=v^{1234}+v^{1256}+v^{3456}+v^{1367}+v^{1457}+v^{2357}-v^{2467}.
\end{equation*}
Since $\G_2$ acts transitively on the set of all $2$-planes in $\RR^7$, we may assume that $\spa{\overline{X}_{\gamma(t)},\overline{Y}_{\gamma(t)}}=\spa{v_1,v_2}$ and so that $\star_{\varphi_t} \varphi_t(\overline X,\overline Y,\cdot, \cdot)(\gamma(t))$ is a non-zero multiple of $\omega:=v^{34}+v^{56}$. $\omega$ has rank four on $T_{\gamma(t)} G\gamma(t)$ and so is non-zero when restricted to any six-dimensional subspace $U\subseteq T_{\gamma(t)} G\gamma(t)$ since otherwise $\omega(U,\cdot)$ is contained in the annihilator $\Ann{U}$ of $U$ which would imply that the rank of $\omega$ was at most two. Hence, the restriction of $\star_{\varphi_t} \varphi_t(\overline X,\overline Y,\cdot, \cdot)(\gamma(t))$ to $\left\{\overline{X}_{\gamma(t)}|X\in \mfu\right\} \subseteq T_{\gamma(t)} G\gamma(t)$ is non-zero, a contradiction.
\item[(c)]
Let $k\in \{1,2,3\}$ be the smallest number such that $\dim(\mfg_{(k)})>3$. Choose $0\neq X\in \mfg_p\subseteq \mathfrak{z}(\mfg)=\mfg_{(1)}$ and $0\neq Y\in \mfg_{(2)}$ linearly independent of $X$ such that $Y\in \mfg_{(1)}$ if $\dim(\mfg_{(1)})\geq 2$. Let $Z\in \mfg_{(k)}$ be linearly independent of $X$ and $Y$. Then $[Y,W]\subseteq \spa{X}$ and $[Z,W]\subseteq \spa{X,Y}$ for all $W\in \mfg$. Thus,
\begin{equation*}
\begin{split}
\left(\star_{\varphi_t} \varphi_t\right)'(\overline X,\overline Y,\overline Z, \overline W)(\gamma(t))=\,& d\varphi_t(\overline X,\overline Y,\overline Z, \overline W)(\gamma(t))=0
\end{split}
\end{equation*}
for all $W\in \mfg$. As in (a), we obtain $\left(\star_{\varphi_t} \varphi_t\right)(\overline X,\overline Y,\overline Z, \overline W)(\gamma(t))=0$ for all $W\in \mfg$ and all $t\in (a,b)$. Hence, $\dim(\mfg_{(k)})=3$ by Lemma \ref{lem:algprop} (c), a contradiction.
\end{enumerate}
Now we prove (b). Going through the list of all nilpotent Lie algebras up to dimension six, cf.\ \cite{Mag} or \cite{PSWZ}, we see that (a) applies exactly to the following six six-dimensional nilpotent Lie algebras $\mfu$: $\RR^6$, $\mfh_3\oplus \RR^3$, $\mfh_3\oplus \mfh_3$, $A_{5,4}\oplus \RR$, $\mathfrak{n}_{6,4}$, $\mathfrak{n}_{6,5}$. Here, $\RR^k$ is the $k$-dimensional Abelian  Lie algebra, $\mfh_3$ is the three-dimensional Heisenberg Lie algebra, $A_{5,4}$ is the Lie algebra with the same name in \cite{PSWZ} (in \cite{Mag}, the Lie algebra is called $\mfg_{5,1}$) and with the only non-zero differential $de^1=e^{24}+e^{35}$. Moreover, $\mathfrak{n}_{6,4}$ and $\mathfrak{n}_{6,5}$ are the fourth and fifth indecomposable nilpotent Lie algebra of dimension six in the list given in \cite{Mag} (in \cite{PSWZ}, they are named $A_{6,4}$ and $A_{6,5}$), where we note that the parameter $\gamma$ in $\mathfrak{n}_{6,5}$ can be chosen to be equal to $-1$, which corresponds to the differentials (\ref{n65Eq}). $\mathfrak{n}_{6,4}$ is given by the following non-zero differentials $de^5=e^{12}, de^6=e^{13}+e^{24}$. The cases $\mfu\in \left\{\RR^7,\mfh_3\oplus \RR^4\right\}$ are covered by (c). If $\mfu\in \left\{\mfh_3\oplus \mfh_3, A_{5,4}\oplus \RR, \mathfrak{n}_{6,4}\right\}$, then $\mfu \oplus \RR$ is isomorphic to a proper semi-direct sum of the form $(\mfh_3\oplus \RR^3)\rtimes \RR$ and so these cases are covered by (a).
\end{proof}
Theorem \ref{thm:hf2singorbit} and Lemma \ref{lem:6to7to8} imply
\begin{cor}\label{cor:8to7}
Let $M$ be a seven-dimensional manifold with a parallel $\G_2$-structure preserved by a proper cohomogeneity-one action of a six-dimensional split-solvable Lie group $ G$. If $\mfg=\mfu\rtimes \RR$ for a five-dimensional nilpotent Lie algebra $\mfu$ with $\dim([\mfu,\mfu])\leq 1$, then all orbits of the action are six-dimensional.
\end{cor}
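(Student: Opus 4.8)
The plan is to reduce the statement to Theorem~\ref{thm:hf2singorbit} by crossing with a line. First I would invoke Lemma~\ref{lem:6to7to8}(b): the parallel $G$-invariant $\G_2$-structure $\varphi$ on $M$ gives a parallel $\Spin(7)$-structure $\Phi=dt\wedge\varphi+\star_{\varphi}\varphi$ on $M\times\RR$ which is invariant under the product action of $G\times\RR$, with $G$ acting on the first factor and $\RR$ by translation on the second. This action is isometric (as $G\times\RR$ preserves $\Phi$, hence the metric it induces), it is proper, being the product of the proper $G$-action on $M$ and the proper translation action of $\RR$ on itself, and it is of cohomogeneity one: a principal $G$-orbit in $M$ has dimension $\dim M-1=6$, so the corresponding orbit $(G\cdot p)\times\RR$ has dimension $7=\dim(M\times\RR)-1$.

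Next I would check the Lie-algebraic hypothesis of Theorem~\ref{thm:hf2singorbit} for the seven-dimensional split-solvable Lie group $G\times\RR$. Its Lie algebra is $\mfg\oplus\RR=(\mfu\rtimes\RR)\oplus\RR$, which I regard as the semidirect sum $(\mfu\oplus\RR)\rtimes\RR$, where the line acts through the original derivation on $\mfu$ and trivially on the new summand. The ideal $\mfu\oplus\RR$ is six-dimensional and nilpotent, and $\dim([\mfu\oplus\RR,\mfu\oplus\RR])=\dim([\mfu,\mfu])\le 1$. If this semidirect sum is proper, Theorem~\ref{thm:hf2singorbit}(a) applies directly. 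If it is not, then $\mfg\oplus\RR$ is isomorphic to the Lie algebra direct sum $(\mfu\oplus\RR)\oplus\RR$, and since $\dim([\mathfrak{n}_{6,5},\mathfrak{n}_{6,5}])=2>\dim([\mfu\oplus\RR,\mfu\oplus\RR])$ the nilpotent summand $\mfu\oplus\RR$ is not isomorphic to $\mathfrak{n}_{6,5}$, so Theorem~\ref{thm:hf2singorbit}(b) applies. In either case, every orbit of $G\times\RR$ on $M\times\RR$ is seven-dimensional.

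Finally I would read the conclusion back: an orbit of $G\times\RR$ through $(p,t)$ is $(G\cdot p)\times\RR$, so $\dim(G\cdot p)+1=7$ for every $p\in M$, i.e.\ all $G$-orbits on $M$ are six-dimensional. The only point I expect to require care is that the hypotheses genuinely transfer to $\mfg\oplus\RR$: the proper/non-proper dichotomy for $(\mfu\oplus\RR)\rtimes\RR$, together with the observation that adjoining a central line neither enlarges the derived algebra nor turns the nilpotent factor into $\mathfrak{n}_{6,5}$. Everything else — properness, the isometry property, cohomogeneity one, and the dimension bookkeeping — is routine.
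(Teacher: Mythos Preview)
Your proof is correct and follows precisely the route the paper intends: cross with a line via Lemma~\ref{lem:6to7to8}(b), verify that $G\times\RR$ acts properly, isometrically, and with cohomogeneity one on the resulting $\Spin(7)$-manifold, rewrite $\mfg\oplus\RR$ as $(\mfu\oplus\RR)\rtimes\RR$ to match the hypotheses of Theorem~\ref{thm:hf2singorbit}, and read off the orbit dimensions. The paper states the corollary as an immediate consequence of these two results without spelling out the details you have supplied; your case split into proper versus non-proper semidirect sum, together with the observation that $\dim[\mfu\oplus\RR,\mfu\oplus\RR]\le 1<2=\dim[\mathfrak{n}_{6,5},\mathfrak{n}_{6,5}]$, is exactly what is needed to invoke parts (a) and (b) of that theorem.
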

\section{Flatness}\label{sec:flatness}
In this section, we prove that a complete Riemannian manifold with a parallel $\SU(3)$-, $\G_2$- or $\Spin(7)$-structure which is preserved by a proper cohomogeneity one action of a split-solvable Lie group with properties as in the last section is flat. Here, as throughout this section, we assume without explicitly mentioning that the Riemannian metric is the one induced by the parallel $\SU(3)$-, $\G_2$- or $\Spin(7)$-structure.

The %mentioned 
main result of this section, Theorem \ref{thm:flatness} below, will be an immediate consequence of the following proposition and the results of the last section.
\begin{prop}\label{prop:flatness}
Let $H\in \{\SU(3),\G_2,\Spin(7)\}$ and let $M$ be a complete Riemannian manifold of appropriate dimension with a parallel $H$-structure preserved by a proper cohomogeneity one action of a $(\dim(M)-1)$-dimensional Lie group $G$. If all orbits are of 
codimension one, then $M$ is flat.
\end{prop}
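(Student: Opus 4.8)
The plan is to show that if $G$ acts with cohomogeneity one and only principal orbits on the complete Riemannian manifold $M$, then $M$ is a Riemannian product of a homogeneous hypersurface with a line, and moreover the metric is flat. The key structural point is that, since all orbits have codimension one and $M$ is complete, $M$ is diffeomorphic to $N\times \RR$ where $N$ is a single orbit $G/G_p$, and the metric has the form $g_t+dt^2$ with $t$ the arclength parameter of a normal geodesic: this is standard for cohomogeneity-one manifolds without singular orbits (the normal geodesic flow is complete and meets every orbit, and the lack of singular orbits means it never "closes up"). In particular, the induced one-parameter family of $G$-invariant $\SU(3)$-, $\G_2$- or $\Spin(7)$-hypersurface structures on $N$ solves the relevant flow equations \eqref{eq:hypoflow}, \eqref{eq:Hitchinflow} or \eqref{eq:Hitchinflow2} on all of $\RR$.

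Next I would exploit completeness in the $t$-direction to control the asymptotic geometry. The idea is that on a complete cohomogeneity-one manifold with only principal orbits, one can let $t\to\pm\infty$ along a normal geodesic; the second fundamental forms (shape operators) $S_t$ of the orbits evolve by a Riccati-type equation, and for the induced metric $g_t+dt^2$ the sectional curvatures in mixed planes are $-g_t(\dot S_t X,X)-g_t(S_t^2X,X)$-type expressions (Gauss and Codazzi). The plan is to argue that because the flow exists for all $t\in\RR$ and the ambient $H$-structure is parallel — hence the curvature is bounded in terms of the $H$-structure, which is itself parallel and so has constant pointwise norm — the solution of the flow cannot blow up or degenerate, and a maximum/monotonicity argument (or directly the classification of complete cohomogeneity-one special holonomy metrics with homogeneous principal orbits in low dimensions) forces the shape operator to be identically zero. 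Once $S_t\equiv 0$, every orbit is totally geodesic, $M$ splits isometrically as $N\times\RR$, and the parallel $H$-structure restricts to a parallel (hence in particular flat, by the holonomy classification) structure on the homogeneous manifold $N$.

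The main obstacle I expect is the last step: showing that a homogeneous Riemannian manifold $N=G/G_p$ carrying a parallel reduction of the structure group to $\SU(2)$, $\SU(3)$ or $\G_2$ (coming from the fact that $M=N\times\RR$ is special-holonomy and $N$ is totally geodesic) must be flat. Here one uses that $N$ is then itself a space with special holonomy (the holonomy of $N$ sits inside that of $M$), that $N$ is homogeneous, and that a homogeneous Ricci-flat manifold is flat by the Alekseevskii–Kimelfeld theorem. Combined with $M=N\times\RR$, this gives $M$ flat. So the cleanest route is: (i) no singular orbits plus completeness $\Rightarrow$ $M\cong N\times\RR$ with metric $g_t+dt^2$ and the flow defined on all of $\RR$; (ii) a Riccati/curvature-boundedness argument using the parallel $H$-structure to force all orbits totally geodesic, so $M=N\times\RR$ isometrically; (iii) $N$ homogeneous with restricted holonomy $\subseteq\SU(n')$ or $\G_2$ is Ricci-flat homogeneous, hence flat by Alekseevskii–Kimelfeld, hence $M$ is flat.

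Alternatively, if step (ii) proves delicate, one can bypass it: since the $H$-structure is parallel the ambient manifold is Ricci-flat, hence by the Cheeger–Gromoll splitting theorem a complete Ricci-flat $M$ that contains a line (a normal geodesic, which is a line because $M$ has no singular orbits and thus the geodesic is globally minimizing after reparametrizing, or more carefully because the orbit space is $\RR$) splits isometrically as $N\times\RR$; then $N$ is complete, homogeneous, Ricci-flat, hence flat, and so is $M$. The hard part in this variant is justifying that the normal geodesic, or some geodesic, is a genuine line in the sense of Cheeger–Gromoll; this uses that the orbit space $M/G$ is a complete one-dimensional Riemannian manifold without boundary, i.e. isometric to $\RR$, so its single geodesic is a line, and minimizing geodesics in $M$ project to minimizing geodesics downstairs. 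I would present the splitting-theorem route as the main argument, since it is the most economical.
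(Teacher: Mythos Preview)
Your splitting-theorem route is essentially the paper's argument: parallel $H$-structure $\Rightarrow$ Ricci-flat, a normal geodesic is a line, Cheeger--Gromoll splits off an $\RR$-factor, and the remaining homogeneous Ricci-flat factor is flat by Alekseevski\u{\i}--Kimel'fel'd. The one technical device the paper adds, which addresses precisely the point you flagged as delicate, is to pass from $M$ to the local model $G\times\RR$ via the $G$-equivariant local isometry $f(g,t)=g\cdot\gamma(t)$: on $G\times\RR$ the pulled-back metric has the form $g_t+dt^2$ with $g_t$ left-invariant, and there the curve $t\mapsto(e,t)$ is manifestly a line, so no argument about the orbit space of $M$ is needed. (The paper also reduces to $H=\Spin(7)$ first via Lemma~\ref{lem:6to7to8}, and after the splitting takes a small detour through the intrinsic torsion to see that $\varphi_0$ is parallel; your direct observation that a Ricci-flat Riemannian product has Ricci-flat factors is enough.) Your first route via a Riccati/shape-operator argument is not needed and the paper does not pursue it.
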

\begin{proof}
Thanks to Lemma \ref{lem:6to7to8}, we only have to do the proof for $H=\Spin(7)$. For that we combine the arguments of \cite{Stock} and \cite{Conti2}. Recall that for any $\G_2$-structure $\varphi\in \Omega^3 N$ on a seven-dimensional manifold $N$ there exists $\mathcal{T}\in \mathrm{End}(TM)$, called the \emph{intrinsic torsion of $\varphi$}, such that $\nabla^g_X \varphi=-\mathcal{T}(X)\hook \star_\varphi \varphi$ for any vector field $X\in \mathfrak{X}(N)$, cf. \cite[p.\ 542]{Bryant}.

Let $M$ now be a complete eight-dimensional Riemannian manifold with a parallel $\Spin(7)$-structure $\Phi$ preserved by a proper cohomogeneity one action of a seven-dimensional Lie group $ G$ such that all orbits are seven-dimensional. Consider a normal geodesic $\gamma:\RR\rightarrow M$ of unit speed. Then
\begin{equation*}
f:G\times \RR\rightarrow M,\quad f(g,t):=g\cdot \gamma(t)
\end{equation*}
is a $ G$-equivariant local diffeomorphism. Pulling $\Phi$ back to $ G\times \RR$ via $f$, we get a parallel $\Spin(7)$-structure $\tilde\Phi$ on $ G\times \RR$ which is preserved by the natural left action of $ G$ on $ G\times I$. Moreover, $f$ is a local isometry for the induced Riemannian metric $g$ on $ G\times \RR$ and $g$ is of the form $g=g_t\oplus dt^2$. Hence, it suffices to prove the statement for $( G\times \RR,g)$ and $\tilde\Phi$. Let $\varphi_t$ be the left-invariant cocalibrated $\G_2$-structure induced on $ G\times \{t\}\cong G$ by Proposition \ref{prop:Hitchinflow2}. Then $g_t$ is the Riemannian metric induced by $\varphi_t$ on $ G\times \{t\}\cong G$. If $\mathcal{T}_t$ denotes the intrinsic torsion of $\varphi_t$, we have 
\begin{equation}\label{eq:metricevolution}
\dot{g}_t(X,Y)=2 g_t(\mathcal{T}_t(X),Y)
\end{equation}
for $X,Y\in \mathfrak{X}(M)$ by \cite[Theorem 3.2]{Stock}. As $\tilde\Phi$ is parallel, the holonomy of $g$ is contained in $\Spin(7)$ and so $( G\times \RR,g)$ is Ricci-flat. Since the geodesic $ \RR \ni t\mapsto (e,t)\in M$ is a line, the Cheeger-Gromoll Splitting Theorem shows that $( G\times \RR,g)$ is the Riemannian product of $( G,g_0)$ and $(\RR,dt^2)$. In particular, $g_t=g_0$ is constant and so Equation (\ref{eq:metricevolution}) gives $\mathcal{T}_0=0$. Thus, $\varphi_0$ is parallel and so the holonomy of $( G,g_0)$ is contained in $\G_2$. In particular, $(\G,g_0)$ is a Ricci-flat homogeneous space and so flat by \cite{AK}. But then also $( G\times \RR,g)$ is flat.
\end{proof}
Proposition \ref{prop:flatness} and the results of Section \ref{sec:singorbits} imply the following theorem, where we note that part (a) generalizes \cite[Corollary 26]{Conti2}.
\begin{thm}\label{thm:flatness}
\begin{enumerate}
\item[(a)]
Let $M$ be a complete six-dimensional Riemannian manifold with parallel $\SU(3)$-structure preserved by a proper cohomogeneity one action of a five-dimensional split-solvable Lie group $ G$. Then $M$ is flat.
\item[(b)]
Let $M$ be a complete seven-dimensional Riemannian manifold with  parallel $\G_2$-structure preserved by a proper cohomogeneity one action of a six-dimensional split-solvable Lie group $ G$. If $\dim\mfg_{(2)}\neq 1$ (e.g.\ if $G$ is nilpotent) or $\mfg=\mfu\rtimes \RR$ with a five-dimensional nilpotent Lie algebra $\mfu$ satisfying $\dim([\mfu,\mfu])\leq 1$, then $M$ is flat.
\item[(c)]
Let $M$ be a complete eight-dimensional Riemannian manifold with  parallel $\Spin(7)$-structure preserved by a proper cohomogeneity one action of a seven-dimensional split-solvable Lie group $ G$. If the conditions in Theorem \ref{thm:hf2singorbit} (a), (b) or (c) are satisfied, then $M$ is flat.
\end{enumerate}
\end{thm}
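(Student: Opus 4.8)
The plan is to deduce all three parts from Proposition \ref{prop:flatness}: in each case I would first check that the given cohomogeneity-one action has only principal orbits, using the non-existence results of Section \ref{sec:singorbits}, and then invoke Proposition \ref{prop:flatness} with the appropriate $H\in\{\SU(3),\G_2,\Spin(7)\}$ to conclude that $M$ is flat. Thus the whole argument is a matter of matching the hypotheses of Theorem \ref{thm:flatness} to those of Theorems \ref{thm:hyposingorbit}, \ref{thm:hfsingorbit}, \ref{thm:hf2singorbit} and Corollary \ref{cor:8to7}. For part (a) the structure is a parallel $\SU(3)$-structure preserved by a five-dimensional split-solvable $G$ on a six-manifold, so Theorem \ref{thm:hyposingorbit} applies directly and shows that every orbit is five-dimensional, i.e.\ of codimension one; Proposition \ref{prop:flatness} with $H=\SU(3)$ then gives flatness, which recovers \cite[Corollary 26]{Conti2}.

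For part (b), $G$ is six-dimensional split-solvable acting on a seven-manifold with parallel $\G_2$-structure. Under the hypothesis $\dim\mfg_{(2)}\neq 1$, Theorem \ref{thm:hfsingorbit} shows that all orbits are six-dimensional; under the alternative hypothesis $\mfg=\mfu\rtimes\RR$ with $\mfu$ five-dimensional nilpotent and $\dim[\mfu,\mfu]\le 1$, Corollary \ref{cor:8to7} gives the same conclusion; in either case Proposition \ref{prop:flatness} with $H=\G_2$ applies. For part (c), $G$ is seven-dimensional split-solvable acting on an eight-manifold with parallel $\Spin(7)$-structure, and one of the hypotheses (a), (b) or (c) of Theorem \ref{thm:hf2singorbit} holds; that theorem shows that all orbits are seven-dimensional, and Proposition \ref{prop:flatness} with $H=\Spin(7)$ finishes the proof.

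There is essentially no genuine obstacle left once Proposition \ref{prop:flatness} and Section \ref{sec:singorbits} are established: the theorem is a direct corollary, and the only thing to verify is that the dimension and algebraic conditions imposed in the three cases coincide exactly with those required by the cited results, which they do by design. All of the analytic substance — the Cheeger–Gromoll splitting argument, Stock's evolution equation \eqref{eq:metricevolution}, and the result of \cite{AK} that a Ricci-flat homogeneous space is flat — has already been absorbed into the proof of Proposition \ref{prop:flatness}.
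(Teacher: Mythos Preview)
Your proposal is correct and matches the paper's approach exactly: the paper states that Theorem \ref{thm:flatness} follows directly from Proposition \ref{prop:flatness} combined with the results of Section \ref{sec:singorbits}, with no further argument given. Your identification of which non-existence result (Theorem \ref{thm:hyposingorbit}, Theorem \ref{thm:hfsingorbit}, Corollary \ref{cor:8to7}, Theorem \ref{thm:hf2singorbit}) feeds into each case is precisely what the paper intends.
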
 

\section{Maximally symmetric solutions of the Hitchin flow on $\SL(2,\CC)$}\label{sec:example}
In this section we consider the Hitchin flow for maximally symmetric initial left-invariant half-flat $\SU(3)$-structures $(\omega,\rho)\in \Omega^2 G\times \Omega^3 G$ on the Lie group $ G=\SL(2,\CC)$. We compute for which initial values the Hitchin flow gives solutions on a finite interval $(a,b)$ which can be extended at one of the two boundary points.  The fact that such a phenomenon occurs is in contrast to the results for certain types of split-solvable Lie groups in Section \ref{sec:singorbits}. 
 The incomplete Riemannian metrics obtained in this way all turn out to be homothetic to each other, as well as to a metric described previously by Bryant and Salamon \cite[Theorem on p.\ 840]{BryantSalamon}. The purpose of this section is on the one hand to give a new description of this metric from the point of view of the Hitchin flow, and on the other hand to put it into a more general context by solving the Hitchin flow for all maximally symmetric initial values.

As usual, we identify all left-invariant tensor fields with the corresponding tensors on the associated Lie algebra $\mfg=\mathfrak{sl}(2,\CC)$. Since $\mathfrak{sl}(2,\CC)\cong \mathfrak{so}(3,1)$, we may choose a basis $(e_1,\ldots,e_6)$ of $\mfg$ such that the differentials $\left(de^1,de^2,de^3,de^4,de^5,de^6\right)$ of the dual basis $\left(e^1,\ldots,e^6\right)$ are given by
\begin{equation*}
\left(e^{23}-e^{56},-e^{13}+e^{46},e^{12}-e^{45},e^{26}-e^{35},-e^{16}+e^{34},e^{15}-e^{24}\right).
\end{equation*}
As $\SU(2)$ is a maximal compact subgroup of $\SL(2,\CC)$, we get maximal symmetry if we assume that the initial half-flat $\SU(3)$-structure on $\mathfrak{sl}(2,\CC)$ is Ad-invariant under $\SU(2)$. We first determine all such $\SU(3)$-structures. For that, observe that $\mfg=\mfg^*=V\oplus V$ as $\SU(2)$-modules with $V$ being the $3$-dimensional adjoint representation of $\SU(2)$ which equals via the universal covering $\SU(2)\rightarrow \mathrm{SO}(3)$ the standard $\mathrm{SO}(3)$-representation on $\RR^3$. Hence,
\begin{equation*}
\Lambda^2\mfg^*=\Lambda^2 (V\oplus V)=2\Lambda^2 V\oplus V\otimes V=3\Lambda^2 V\oplus S^2 V=3 V\oplus S^2 V
\end{equation*}
and
\begin{equation*}
\Lambda^3\mfg^*=\Lambda^3 (V\oplus V)=2\Lambda^3 V\oplus 2 \Lambda^2 V\otimes V=2\RR\oplus 2 V\otimes V=2\RR\oplus 2 V\oplus 2 S^2 V
\end{equation*}
as $\SU(2)$-modules. As $V$ has no non-zero $\SU(2)$-invariant elements and the $\SU(2)$-invariant elements of $S^2 V$ are multiples of the metric $g_0$, we have
\begin{align*}
\left(\Lambda^2\mfg^*\right)^{\SU(2)} &=  \RR\cdot g_0=\spa{e^{14}+e^{25}+e^{36}},\\
\left(\Lambda^3\mfg^*\right)^{\SU(2)}&=  2\RR\oplus 2 \RR\cdot g_0=\spa{e^{123},e^{456},e^{126}-e^{135}+e^{234}, e^{156}-e^{246}+e^{345}}.
\end{align*}
For arbitrary $a,\,b_1,\,b_2,\,b_3,\,b_4\in \RR$, the forms
\begin{equation*}
\begin{split}
\omega&:=  a \left(e^{14}+e^{25}+e^{36}\right),\\
\rho&:=  b_1 e^{123}+b_2 e^{456}+b_3\left(e^{126}-e^{135}+e^{234}\right)+b_4 \left(e^{156}-e^{246}+e^{345}\right)
\end{split}
\end{equation*}
fulfill $\rho\wedge \omega=0$ and $d(\omega^2)=0$. Moreover, $\rho$ is closed precisely when $b_4=-b_1$. We set $b_4:=-b_1$ in the following.

Next, we want to compute $\lambda(\rho)$ and
  $J_{\rho}$. Therefore, we first have to compute $K_{\rho}\in
  \mathrm{End}(\mfg)\otimes \Lambda^6 \mfg^*$ via Equation
  \eqref{eq:Krho}. As $\rho$ is $\SU(2)$-invariant, $K_{\rho}$ is
  $\SU(2)$-invariant as well. Consequently, we only have to compute
  $K_{\rho}(e_1)$ and $K_{\rho}(e_4)$ to determine $K_{\rho}$. Since
   $K_{\rho}(e_1)=\left(b_1(b_2+b_3)e_1+2(b_1^2+b_3^2)e_4\right)\otimes e^{123456}$ and $K_{\rho}(e_4)=\left(2(b_2 b_3-b_1^2) e_1-b_1(b_2+b_3)e_4\right)\otimes e^{123456}$, we obtain
\begin{equation*}
\begin{split}
K_{\rho}=&\left(\sum_{i=1}^3 e^i\otimes \left(b_1(b_2+b_3)e_i+2(b_1^2+b_3^2)e_{i+3}\right)\right)\otimes e^{123456}\\
        + &\left(\sum_{i=1}^3 e^{i+3}\otimes \left(2(b_2 b_3-b_1^2)e_i-b_1(b_2+b_3)e_{i+3}\right)\right)\otimes e^{123456}.
\end{split}
\end{equation*}
Hence, by Equation \eqref{eq:lambda},
\begin{equation*}
\begin{split}
\lambda(\rho)=& \frac{1}{6}\mathrm{tr}((K_{\rho}\otimes \id_{\Lambda^6 V^*})\circ K_{\rho} )\\
= &\left(b_1^2(b_2+b_3)^2-4(b_1^2+b_3^2)(b_1^2-b_2b_3)\right)\left(e^{123456}\right)^{\otimes 2}.
\end{split}
\end{equation*}
We set
\begin{equation}\label{eq:lambdaexplizit}
\lambda(b_1,b_2,b_3):=b_1^2(b_2+b_3)^2-4(b_1^2+b_3^2)(b_1^2-b_2b_3).
\end{equation}
To get an $\SU(3)$-structure compatible with the orientation
  given by $\omega^3$, we need $\lambda(b_1,b_2,b_3)<0$ by Lemma \ref{lem:characterizationSU3} and then the normalization condition $\sqrt{-\lambda(\rho)}=\frac{1}{3}\omega^3$ reads
\begin{equation*}
|a|= 2^{-\frac{1}{3}}\left(-\lambda(b_1,b_2,b_3)\right)^{\frac{1}{6}}.
\end{equation*}
Using Equation \eqref{eq:Jrho}, the induced almost complex structure $J_{\rho}$ is given by
\begin{equation*}
\begin{split}
J_{\rho}=&-\frac{\mathrm{sgn}(a)}{\sqrt{-\lambda(b_1,b_2,b_3)}}\sum_{i=1}^3 e^i\otimes \left(b_1(b_2+b_3)e_i+2(b_1^2+b_3^2)e_{i+3}\right)\\
&-\frac{\mathrm{sgn}(a)}{\sqrt{-\lambda(b_1,b_2,b_3)}}\sum_{i=1}^3 e^{i+3}\otimes \left(2(b_2 b_3-b_1^2)e_i-b_1(b_2+b_3)e_{i+3}\right).
\end{split}
\end{equation*}
Hence, the induced metric $g_{(\omega,\rho)}=\omega(J_{\rho}\cdot,\cdot)$ equals
\begin{equation}\label{eq:metric}
\begin{split}
g_{(\omega,\rho)}=\frac{2^{\frac{2}{3}}}{(-\lambda(b_1,b_2,b_3))^{\frac{1}{3}}}
\sum_{i=1}^3&\left((b_1^2+b_3^2)e^{i}\otimes e^{i})+
(b_1^2-b_2b_3)e^{i+3}\otimes e^{i+3}\right.\\ 
& \left. -\frac{b_1(b_2+b_3)}{2}(e^i\otimes e^{i+3}+e^{i+3}\otimes
e^i)\right).
\end{split}
\end{equation}
We must have $b_1^2+b_3^2>0$ as otherwise $\lambda(b_1,b_2,b_3)\geq 0$. Moreover,
\begin{equation*}
(b_1^2+b_3^2) (b_1^2-b_2 b_3)-\frac{b_1^2(b_2+b_3)^2}{4}=-\frac{\lambda(b_1,b_2,b_3)}{4}>0
\end{equation*}
and so $g_{(\omega,\rho)}$ is always positive definite. Thus, by Lemma \ref{lem:characterizationSU3}, we have obtained
\begin{lemma}\label{lem:allinvarianthf}
In the basis $(e_1,\ldots,e_6)$ of $\mathfrak{sl}(2,\CC)$ given above, the set of all $\SU(2)$-invariant half-flat $\SU(3)$-structures $(\omega,\rho)$ is given by
\begin{equation*}
\begin{split}
\omega=\ &\omega_{\epsilon, b_1,b_2,b_3}:= 2^{-\frac{1}{3}}\epsilon\cdot\left(-\lambda(b_1,b_2,b_3)\right)^{\frac{1}{6}} \left(e^{14}+e^{25}+e^{36}\right),\\
\rho=\ & \rho_{b_1,b_2,b_3}:=b_1 e^{123}+b_2 e^{456}+b_3\left(e^{126}-e^{135}+e^{234}\right)- b_1 \left(e^{156}-e^{246}+e^{345}\right)
\end{split}
\end{equation*}
for arbitrary $b_1,b_2,b_3\in \RR$ with $\lambda(b_1,b_2,b_3)<0$ and $\epsilon\in \{-1,1\}$, where $\lambda(b_1,b_2,b_3)$ is defined by Equation (\ref{eq:lambdaexplizit}).
\end{lemma}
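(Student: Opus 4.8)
The plan is to verify the five conditions of Lemma \ref{lem:characterizationSU3} for the pair $(\omega,\rho)=(\omega_{\epsilon,b_1,b_2,b_3},\rho_{b_1,b_2,b_3})$ and, conversely, to check that the half-flatness equations $d(\omega^2)=0$ and $d\rho=0$ force $(\omega,\rho)$ into this family. The bulk of the algebraic work has in fact already been carried out in the discussion preceding the statement: we have described the spaces $(\Lambda^2\mfg^*)^{\SU(2)}$ and $(\Lambda^3\mfg^*)^{\SU(2)}$, noted that every invariant $\omega$ is a multiple of $e^{14}+e^{25}+e^{36}$ and every invariant $\rho$ lies in the four-parameter span, computed $K_\rho$, $\lambda(\rho)$, $J_\rho$ and $g_{(\omega,\rho)}$, and observed that $\rho\wedge\omega=0$ and $d(\omega^2)=0$ hold identically on this family while $d\rho=0$ is equivalent to $b_4=-b_1$. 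So the proof is essentially an act of bookkeeping: assemble these computations in the right order.

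First I would invoke Remark \ref{rem:invariant} (second bullet) to reduce an arbitrary $\SU(2)$-invariant half-flat $\SU(3)$-structure to its underlying invariant forms, which by the representation-theoretic decomposition must be $\omega=a(e^{14}+e^{25}+e^{36})$ and $\rho=b_1e^{123}+b_2e^{456}+b_3(e^{126}-e^{135}+e^{234})+b_4(e^{156}-e^{246}+e^{345})$ for real $a,b_1,\dots,b_4$. Then $d\rho=0$ gives $b_4=-b_1$ (a short computation using the stated differentials of the $e^i$), cutting the parameters down to $a,b_1,b_2,b_3$. Conversely, for any such tuple one checks directly that $\rho\wedge\omega=0$ and $d(\omega^2)=0$, so conditions (a) (nondegeneracy of $\omega$, which holds iff $a\neq 0$) and (c) of Lemma \ref{lem:characterizationSU3} are automatic once $a\neq 0$.

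Next I would turn to conditions (b), (d), (e). From the already-displayed formula $\lambda(\rho)=\lambda(b_1,b_2,b_3)(e^{123456})^{\otimes 2}$ with $\lambda(b_1,b_2,b_3)=b_1^2(b_2+b_3)^2-4(b_1^2+b_3^2)(b_1^2-b_2b_3)$, condition (b) ($\lambda(\rho)<0$) is exactly $\lambda(b_1,b_2,b_3)<0$. Under this hypothesis, $\omega^3=6a\,e^{123456}$ (so the orientation is $\sgn(a)$ times the one given by $e^{123456}$), and the normalization $\sqrt{-\lambda(\rho)}=\tfrac13\omega^3$ of condition (d) becomes $|a|=2^{-1/3}(-\lambda(b_1,b_2,b_3))^{1/6}$; writing $a=\epsilon|a|$ with $\epsilon=\sgn(a)\in\{-1,1\}$ recovers precisely the stated $\omega_{\epsilon,b_1,b_2,b_3}$. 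Finally, plugging the displayed $J_\rho$ into $g_{(\omega,\rho)}=\omega(J_\rho\cdot,\cdot)$ yields the matrix in \eqref{eq:metric}, whose positive-definiteness is the content of condition (e); this follows from $b_1^2+b_3^2>0$ (forced by $\lambda<0$) together with the $2\times 2$ principal minor identity $(b_1^2+b_3^2)(b_1^2-b_2b_3)-\tfrac14 b_1^2(b_2+b_3)^2=-\tfrac14\lambda(b_1,b_2,b_3)>0$, both of which are noted above. Assembling these, Lemma \ref{lem:characterizationSU3} certifies that $(\omega_{\epsilon,b_1,b_2,b_3},\rho_{b_1,b_2,b_3})$ is an $\SU(3)$-structure, and it is half-flat by construction; conversely every invariant half-flat structure arises this way.

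The main obstacle is not conceptual but organizational: one must be careful that the sign conventions (the orientation induced by $\omega^3$ versus that of $e^{123456}$, hence the sign $\sgn(a)$ appearing in $J_\rho$) are tracked consistently through the normalization, so that the discrete parameter $\epsilon$ ends up meaning exactly $\sgn(a)$ and the two cases are genuinely distinct $\SU(3)$-structures rather than an artifact. Beyond that, everything reduces to the determinant and trace computations for $K_\rho$ and $g_{(\omega,\rho)}$ already displayed, so the proof is simply a matter of citing them in order and confirming that the constraints they impose are exactly $\lambda(b_1,b_2,b_3)<0$ and the stated normalization of $\omega$.
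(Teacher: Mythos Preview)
Your proposal is correct and follows essentially the same approach as the paper, which proves the lemma via the computations in the discussion immediately preceding it (representation-theoretic decomposition of invariant forms, the closedness condition $b_4=-b_1$, the computation of $K_\rho$, $\lambda(\rho)$, $J_\rho$, $g_{(\omega,\rho)}$, and the positive-definiteness check via the $2\times 2$ minor identity), concluding by Lemma~\ref{lem:characterizationSU3}. Two minor slips: the reference to Remark~\ref{rem:invariant} is unnecessary (invariance of the forms is just the definition), and $\omega^3$ is proportional to $a^3$, not $a$, though your stated normalization $|a|=2^{-1/3}(-\lambda)^{1/6}$ is correct.
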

 Note that a change of sign in $\epsilon$ does not change the
  metric $g_{(\omega,\rho)}$.

Next, we want to solve the Hitchin flow for the initial values
$\omega_0=\omega_{\epsilon,b_1,b_2,b_3},\,
\rho_0=\rho_{b_1,b_2,b_3}$. For this we note that during the flow
$\rho_t$ stays closed and $(\omega_t,\rho_t)$ stay $\SU(2)$-invariant
as $\SU(2)$ acts on $\mathfrak{sl}(2,\CC)$ by automorphisms,
cf. Remark \ref{rem:invariant}. Hence, the maximal solution
$(\omega_t,\rho_t)$ of the Hitchin flow with the mentioned initial
values has to be of the form 
\begin{equation*}
\begin{split}
\rho_t=& y_1(t) e^{123}+y_2(t) e^{456}+y_3(t) \left(e^{126}-e^{135}+e^{234}\right)- y_1(t) \left(e^{156}-e^{246}+e^{345}\right)\\
\omega_t= &  2^{-\frac{1}{3}}\epsilon\left(-\lambda(y_1(t),y_2(t),y_3(t))\right)^{\frac{1}{6}} \left(e^{14}+e^{25}+e^{36}\right)
\end{split}
\end{equation*}
for smooth functions $y_1,\, y_2,\, y_3: (a,b)\rightarrow \RR$, $-\infty \leq a<0<b\leq \infty$ fulfilling $y_i(0)=b_i$ for $i=1,\,2,\,3$, where $(a,b)$ is the maximal interval of existence. The flow equation $\dot{\rho_t}=d\omega_t$ reads
\begin{equation*}\label{eq:floweqs1}
\dot{y_1}=0,\, \dot{y_2}=-3\cdot 2^{-\frac{1}{3}} \epsilon\cdot(-\lambda(y_1,y_2,y_3))^{\frac{1}{6}},\, \dot{y_3}=- 2^{-\frac{1}{3}} \epsilon\cdot(-\lambda(y_1,y_2,y_3))^{\frac{1}{6}}
\end{equation*}
and so we obtain that $y_1\equiv b_1$, $y_2=3 y_3+b_2-3b_3$ and $y_3:(a,b)\rightarrow \RR$ is a maximal solution of the initial value problem
\begin{equation}\label{eq:flow}
\dot{x}=-  2^{-\frac{1}{3}}\epsilon\cdot\left(-\lambda(b_1,3x+b_2-3b_3,x)\right)^{\frac{1}{6}},\quad x(0)=b_3.
\end{equation}
Note that then the second flow equation $\frac{d}{dt}\left(\frac{\omega_t^2}{2}\right)=d\widehat{\rho_t}$ is automatically fulfilled. To avoid indices, we set $x:=y_3$ in the following.

Next, we prove that the maximal interval of existence $(a,b)$ is
finite. By separation of variables in the equation
$$ \frac{dy}{ds}=-2^{-\frac13}\epsilon f(y)^{-\frac16},$$
where $f(x):=-\lambda(b_1,3x+b_2-3b_3,x)$, we get
\begin{equation*}
|t|=\left|\int_{0}^t ds\right|=\left|-2^{\frac{1}{3}}\epsilon \int_{b_3}^{x(t)} f(y)^{-\frac{1}{6}} dy\right|= 2^{\frac{1}{3}} \left|\int_{b_3}^{x(t)}  f(y)^{-\frac{1}{6}} dy \right|
\end{equation*}
From Equation \eqref{eq:lambdaexplizit}, $f$ is a polynomial of degree four with leading coefficient equal to $-12$. Hence, $f(x)$ is negative for all $x$ whose absolute value is sufficiently large and as $f(b_3)>0$, there exist $x_1<b_3<x_2$ with $f(x_1)=f(x_2)=0$ and $f(x)>0$ for all $x\in (x_1,x_2)$. By the standard theory of ordinary differential equations, $x(t)$ tends to $x_1$ or $x_2$ when $t$ tends to $a$ and it tends to the other $x_i$ when $t$ tends to $b$. Hence, it suffices to show that $\lim_{x\rightarrow x_1^+}\int_{b_3}^{x}  f(y)^{-\frac{1}{6}} dy$ and $\lim_{x\rightarrow x_2^-}\int_{b_3}^{x}  f(y)^{-\frac{1}{6}} dy$ are finite. But $x_1$ and $x_2$ are zeros of $f$ of multiplicity at most four and so there exist constants $C_1, C_2>0$ such that $f(x)\geq C_1 (x-x_1)^4$ for all $x$ near $x_1$ and $f(x)\geq C_2 (x-x_2)^4$ for all $x$ near $x_2$. Thus, the integrand in the above integrals is less or equal to $C_i^{-\frac{1}{6}} (x-x_i)^{-\frac{2}{3}}$ near $x_i$ and, consequently,
$\lim_{x\rightarrow x_1^+}\int_{b_3}^{a}  f(y)^{-\frac{1}{6}} dy$ and $\lim_{x\rightarrow x_2^-}\int_{b_3}^{x} f(y)^{-\frac{1}{6}} dy$ are finite. Hence, $a$ and $b$ are both finite. Summarizing these results, we have
\begin{lemma}\label{lem:hfsolutions}
The maximal solution $(a,b)\ni t\mapsto (\omega_t,\rho_t)$ of the Hitchin flow on $\mathfrak{sl}(2,\CC)$ with initial value at $t=0$ equal to $(\omega_{\epsilon,b_1,b_2,b_3},\rho_{b_1,b_2,b_3})$ as in Lemma \ref{lem:allinvarianthf} is defined on a finite interval $(a,b)$ and is explicitly given by
\begin{equation*}
\begin{split}
\omega_t&=   2^{-\frac{1}{3}}\epsilon \cdot \left(-\lambda(b_1,3x(t)+b_2-3b_3,x(t))\right)^{\frac{1}{6}} \left(e^{14}+e^{25}+e^{36}\right)\\
\rho_t&= b_1 e^{123}+(3x(t)+b_2-3b_3) e^{456}+x(t) \left(e^{126}-e^{135}+e^{234}\right)\\
&\qquad  - b_1 \left(e^{156}-e^{246}+e^{345}\right),\\
\end{split}
\end{equation*}
where $x:(a,b)\rightarrow \RR$ is the maximal solution of the initial value problem
\begin{equation*}
\dot{x}=-  2^{-\frac{1}{3}}\epsilon\cdot\left(-\lambda(b_1,3x+b_2-3b_3,x)\right)^{\frac{1}{6}},\ \ \, x(0)=b_3.
\end{equation*}
\end{lemma}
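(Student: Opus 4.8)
The plan is to use the $\SU(2)$-symmetry to collapse the Hitchin flow to a single scalar ODE and then analyse that ODE. First I would observe that the maximal solution $t\mapsto(\omega_t,\rho_t)$ of the Hitchin flow is, by the uniqueness part of Remark~\ref{rem:invariant} together with the fact that $\SU(2)$ acts on $\mfg=\mathfrak{sl}(2,\CC)$ by automorphisms fixing the initial datum, $\SU(2)$-invariant for all $t$ in its interval of existence $(a,b)$; and it keeps $\rho_t$ closed, since $\tfrac{d}{dt}(d\rho_t)=d(d\omega_t)=0$ by the first equation in \eqref{eq:Hitchinflow}, so $d\rho_t\equiv d\rho_0=0$. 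Hence each $(\omega_t,\rho_t)$ is one of the structures of Lemma~\ref{lem:allinvarianthf}; the coefficient of $e^{14}$ in $\omega_t$ is continuous and nonzero, so the discrete parameter $\epsilon$ is constant. Writing $y_1(t),y_2(t),y_3(t)$ for the corresponding values of $b_1,b_2,b_3$ yields the displayed ansatz with $y_i(0)=b_i$.

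Next I would substitute this ansatz into $\dot\rho_t=d\omega_t$. The only computation required is $d(e^{14}+e^{25}+e^{36})$, which from the given structure equations equals $-(e^{126}-e^{135}+e^{234})-3\,e^{456}$, an $\SU(2)$-invariant $3$-form, as it must be. Comparing coefficients in the basis $\{e^{123},\,e^{456},\,e^{126}-e^{135}+e^{234},\,e^{156}-e^{246}+e^{345}\}$ of $(\Lambda^3\mfg^*)^{\SU(2)}$ then gives $\dot y_1=0$, $\dot y_2=-3\cdot 2^{-1/3}\epsilon\,(-\lambda(y_1,y_2,y_3))^{1/6}$ and $\dot y_3=-2^{-1/3}\epsilon\,(-\lambda(y_1,y_2,y_3))^{1/6}$ (the coefficient of $e^{156}-e^{246}+e^{345}$ merely reproduces $-\dot y_1=0$). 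Thus $y_1\equiv b_1$, $y_2=3y_3+b_2-3b_3$, and $x:=y_3$ solves the scalar initial value problem of the statement. Since the family so obtained is the genuine maximal Hitchin solution, which satisfies both equations of \eqref{eq:Hitchinflow}, the second flow equation $\tfrac{d}{dt}(\omega_t^2/2)=d\widehat{\rho_t}$ holds automatically; alternatively it can be checked directly, both sides being $\SU(2)$-invariant $4$-forms on a $6$-dimensional space.

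Finally, to show $(a,b)$ is finite I would separate variables. With $f(x):=-\lambda(b_1,3x+b_2-3b_3,x)$, formula \eqref{eq:lambdaexplizit} shows $f$ is a quartic with leading coefficient $-12$, and $f(b_3)>0$ by Lemma~\ref{lem:allinvarianthf}; hence the connected component of $\{f>0\}$ containing $b_3$ is a bounded interval $(x_1,x_2)$ with $f(x_1)=f(x_2)=0$. The solution $x(t)$ is monotone, so it tends to one of $x_1,x_2$ as $t\to a$ and to the other as $t\to b$, and maximality of the interval forces these limits. Separating variables gives $|t|=2^{1/3}\bigl|\int_{b_3}^{x(t)}f(y)^{-1/6}\,dy\bigr|$, so finiteness of $a$ and $b$ reduces to convergence of $\int_{b_3}^{x_i}f(y)^{-1/6}\,dy$; since each $x_i$ is a zero of the quartic $f$ of multiplicity at most $4$, one has $f(x)\ge C_i|x-x_i|^4$ near $x_i$ for some $C_i>0$, whence the integrand is bounded there by $C_i^{-1/6}|x-x_i|^{-2/3}$, which is integrable.

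The step I expect to require the most care is the reduction itself — verifying that $\SU(2)$-invariance together with closedness of $\rho_t$ genuinely confines the solution to the family of Lemma~\ref{lem:allinvarianthf}, and that the first flow equation alone already pins the solution down (so that the second equation is redundant). Once the quartic structure of $f$ is in hand, the integrability estimate near its zeros is routine.
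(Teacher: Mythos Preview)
Your proposal is correct and follows essentially the same approach as the paper: reduce to the $\SU(2)$-invariant ansatz via Remark~\ref{rem:invariant} and closedness of $\rho_t$, plug into $\dot\rho_t=d\omega_t$ to get the three scalar ODEs (yielding $y_1\equiv b_1$, $y_2=3y_3+b_2-3b_3$ and the stated equation for $x=y_3$), and then prove finiteness of $(a,b)$ by separation of variables together with the quartic/multiplicity-$\leq 4$ integrability estimate near the zeros of $f$. The only differences are cosmetic: you spell out $d(e^{14}+e^{25}+e^{36})$ and the constancy of $\epsilon$ explicitly, whereas the paper records only the resulting ODEs.
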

Now we can state and prove the main result of this section:
\begin{thm}\label{thm:hfextended}
Let $(\SL(2,\CC)\times (a,b),g_t+dt^2)$, $-\infty<a<0<b<\infty$, be the Riemannian manifold obtained from the maximal solution of the Hitchin flow on $\SL(2,\CC)$ with initial $\SU(2)$-invariant left-invariant half-flat $\SU(3)$-structure \linebreak $(\omega_{\epsilon,b_1,b_2,b_3},\rho_{b_1,b_2,b_3})$ at $t=0$ as in Lemma \ref{lem:allinvarianthf}.

Then $(\SL(2,\CC)\times (a,b),g_t+dt^2)$ cannot be realized as the regular part of a Riemannian manifold with a proper isometric $\SL(2,\CC)$-action of cohomogeneity one with two singular orbits. However, it can be realized as the regular part of a Riemannian manifold $(M,g)$ with a proper isometric $\SL(2,\CC)$-action of cohomogeneity one with precisely one singular orbit if and only if $b_1=0$. All these Riemannian manifolds $(M,g)$ are non-complete and homothetic to the Bryant-Salamon metric with holonomy $\G_2$ on the spin bundle over hyperbolic $3$-space described in \cite{BryantSalamon}.
\end{thm}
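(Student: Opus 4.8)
The plan is to decide, at each of the two finite endpoints of $(a,b)$ and in terms of the parameters $b_1,b_2,b_3$, whether a singular orbit can be glued in, by combining the slice theorem with the explicit solution of Lemma~\ref{lem:hfsolutions} and the explicit metric \eqref{eq:metric}. The key reduction is the following. Suppose $(M,g)$ is a Riemannian manifold with a proper isometric cohomogeneity-one $G=\SL(2,\CC)$-action realizing $\SL(2,\CC)\times(a,b)$ as its regular part, with a singular orbit met along the normal geodesic $\gamma(t)=(e,t)$ at $t\to t_0$, $t_0\in\{a,b\}$. All regular orbits are copies of $\SL(2,\CC)$ with trivial principal isotropy, so by the slice theorem a neighbourhood of $G\cdot\gamma(t_0)$ is $G\times_H B$ for a ball $B$ in the slice representation of $H:=G_{\gamma(t_0)}$, on which $H$ acts transitively on the unit sphere. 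Since the map $f\colon G\times(a,b)\to M$, $f(g,t)=g\cdot\gamma(t)$, is a local isometry for $g_t+dt^2$ and the fundamental vector field $\overline X$ of $X\in\mathfrak{sl}(2,\CC)$ has $g$-length $\sqrt{g_t(X,X)}$ along $\gamma$ and extends smoothly across $G\cdot\gamma(t_0)$, vanishing there precisely for $X\in\mathfrak h$, we get $\mathfrak h=\{X\in\mathfrak{sl}(2,\CC):\ g_t(X,X)\to 0\ \text{as}\ t\to t_0\}$; that is, $\mathfrak h$ is exactly the subspace of $\mathfrak{sl}(2,\CC)$ on which $g_t$ collapses. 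Because the Hitchin flow preserves $\Ad(\SU(2))$-invariance (Remark~\ref{rem:invariant}), $g_t$ always lies in $(S^2\mathfrak{sl}(2,\CC)^*)^{\SU(2)}$, i.e.\ $g_t=p(t)\sum_{i=1}^3 e^i\otimes e^i+q(t)\sum_{i=1}^3(e^i\otimes e^{i+3}+e^{i+3}\otimes e^i)+r(t)\sum_{i=1}^3 e^{i+3}\otimes e^{i+3}$, and its restriction to each plane $\spa{e_i,e_{i+3}}$ is the same Gram matrix $M(t)=\left(\begin{smallmatrix}p&q\\ q&r\end{smallmatrix}\right)$, which by \eqref{eq:metric} satisfies $\det M(t)=2^{-2/3}\bigl(-\lambda(\rho_t)\bigr)^{1/3}$. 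Since $-\lambda(\rho_t)\to 0$ at both ends, exactly one eigenvalue of $M(t)$ tends to $0$; hence the collapsing subspace $\mathfrak h$ is $3$-dimensional and limits to one of the $\Ad(\SU(2))$-invariant $3$-planes $P_{[\alpha:\beta]}:=\spa{\alpha e_i+\beta e_{i+3}\mid i=1,2,3}$.

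Next I would determine which $P_{[\alpha:\beta]}$ can be a subalgebra of $\mathfrak{sl}(2,\CC)$. Writing out $[\alpha e_1+\beta e_4,\alpha e_2+\beta e_5]$ with the given structure constants one gets $(\beta^2-\alpha^2)e_3-2\alpha\beta e_6$, which lies in $P_{[\alpha:\beta]}$ only for $\beta=0$; so $P_{[1:0]}=\spa{e_1,e_2,e_3}\cong\su(2)$ is the unique $\Ad(\SU(2))$-invariant $3$-plane that is a subalgebra. Consequently a singular orbit at an endpoint forces the collapsing eigenvector of $M$ there to be $(1,0)$, i.e.\ the off-diagonal entry $q(t)$ — proportional by \eqref{eq:metric} to $b_1\bigl(y_2(t)+y_3(t)\bigr)$ times the blowing-up factor $(-\lambda(\rho_t))^{-1/3}$ — to vanish in the limit, which forces $b_1=0$ or $y_2+y_3=4x+b_2-3b_3\to 0$. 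In the latter case $x_{\mathrm{end}}=(3b_3-b_2)/4$, whence $3x_{\mathrm{end}}+b_2-3b_3=-x_{\mathrm{end}}$ and $-\lambda(b_1,3x_{\mathrm{end}}+b_2-3b_3,x_{\mathrm{end}})=4(b_1^2+x_{\mathrm{end}}^2)^2$; but $x_{\mathrm{end}}$ is a zero of $-\lambda(b_1,3x+b_2-3b_3,x)$ (a possible limit of $x(t)$, cf.\ Lemma~\ref{lem:hfsolutions}), so this alternative is impossible unless $b_1=0$. Hence no singular orbit can be attached unless $b_1=0$.

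It then remains to treat $b_1=0$ directly. There $q\equiv 0$, $\lambda(\rho_t)=4\,y_2(t)\,y_3(t)^3$, and $-\lambda(b_1,3x+b_2-3b_3,x)=-4(3x+b_2-3b_3)x^3$ has a triple zero at $x=0$ and a simple zero at $x=(3b_3-b_2)/3\neq 0$, at which moreover $y_2\to 0$. From \eqref{eq:metric} one gets $p(t)=|y_3|\,|y_2|^{-1/3}$ and $r(t)=|y_2|^{2/3}$. At the simple-zero endpoint $p(t)\to\infty$ and $r(t)\to 0$, so the collapsing subspace is $\spa{e_4,e_5,e_6}$, which is not a subalgebra, and the $e_1,e_2,e_3$-directions blow up: no singular orbit is possible there, and $g$ is genuinely incomplete at that end. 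At the triple-zero endpoint $t_0$, separation of variables in \eqref{eq:flow} gives $|y_3(t)|\sim c\,(t-t_0)^2$, whence $p(t)\sim c\,(3b_3-b_2)^{-1/3}(t-t_0)^2$ and $r(t)\to(3b_3-b_2)^{2/3}>0$: precisely $P_{[1:0]}=\su(2)$ collapses to second order in the distance while $\spa{e_4,e_5,e_6}$ stays bounded. So exactly one singular orbit — the $3$-manifold $\SL(2,\CC)/\SU(2)=\HH^3$, with normal bundle $\SL(2,\CC)\times_{\SU(2)}\CC^2$, the spin bundle over hyperbolic $3$-space — can and must be glued in. In particular there are never two singular orbits (each would sit at a triple-zero endpoint, but $-\lambda(b_1,3x+b_2-3b_3,x)$ has at most one), proving the first assertion; there is one singular orbit if and only if $b_1=0$, proving the second; and non-completeness follows from the behaviour at the remaining simple-zero end.

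For the last assertion I would put $g$ in the cohomogeneity-one form $dt^2+p(t)\sum_{i=1}^3 e^i\otimes e^i+r(t)\sum_{i=1}^3 e^{i+3}\otimes e^{i+3}$ and observe that, under $b_i\mapsto\mu b_i$ together with a rescaling of $t$, $g$ is multiplied by $\mu^{2/3}$; since replacing $b_2-3b_3$ by its negative is realized by $(\omega,\rho)\mapsto(\omega,-\rho)$, which leaves $J_\rho$ and hence $g_{(\omega,\rho)}$ unchanged, and the choice of base point $t=0$ only reparametrizes $t$, the completions obtained form a single homothety class. A change of variables then matches this metric with the cohomogeneity-one $\G_2$-holonomy metric on the spin bundle over hyperbolic $3$-space in \cite[Section~3]{BryantSalamon}. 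I expect the main obstacle to be twofold: making rigorous the identification $\mathfrak h=\{X:g_t(X,X)\to 0\}$ and the passage to the singular orbit — for which one uses that $\varphi=\omega_t\wedge dt+\rho_t$ and $\star_\varphi\varphi=\tfrac12\omega_t^2-\widehat{\rho_t}\wedge dt$ extend continuously across the singular orbit (Proposition~\ref{prop:Hitchinflow}) and that fundamental vector fields are smooth there — and upgrading the leading-order asymptotics at the triple-zero end to genuine smoothness of the completion, which is cleanest to obtain via the explicit comparison with Bryant--Salamon's metric rather than by an ab initio regularity analysis.
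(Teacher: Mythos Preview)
Your proposal is correct and the overall architecture matches the paper's: rule out singular orbits for $b_1\neq 0$ by a metric blow-up, treat $b_1=0$ by analysing the two zeros of $-\lambda(0,3x+b_2-3b_3,x)$, and obtain the smooth completion by appeal to Bryant--Salamon after establishing that all $b_1=0$ solutions are homothetic. A few points of comparison and two small gaps are worth noting.

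\textbf{The obstruction for $b_1\neq 0$.} The paper argues in one line: $g_t(e_1,e_1)=2^{2/3}(b_1^2+x(t)^2)/(-\lambda)^{1/3}\to\infty$ at both ends, so no fundamental vector field norm can stay bounded. You instead classify the $\Ad(\SU(2))$-invariant $3$-planes $P_{[\alpha:\beta]}$, observe that only $P_{[1:0]}=\su(2)$ is a subalgebra, and then force $q\to 0$, leading to the contradiction $-\lambda(b_1,-x_{\mathrm{end}},x_{\mathrm{end}})=4(b_1^2+x_{\mathrm{end}}^2)^2\neq 0$. Both routes work; yours is more conceptual and explains \emph{why} $\su(2)$ is the only candidate isotropy, while the paper's is shorter. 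Note that your implication ``collapsing eigenvector $=(1,0)$ $\Rightarrow$ $q\to 0$'' is justified most cleanly by observing that $e_4\notin\mathfrak h$, so $r(t)=g_t(e_4,e_4)\to g(\overline{e_4},\overline{e_4})(\gamma(t_0))>0$; then $\det M=pr-q^2\to 0$ and $p\to 0$ force $q\to 0$. Also, your assertion that \emph{exactly one} eigenvalue of $M(t)$ tends to $0$ needs the remark that both cannot vanish: that would give $\mathfrak h=\mathfrak{sl}(2,\CC)$, hence $G_{\gamma(t_0)}=G$, contradicting properness since $\SL(2,\CC)$ is non-compact.

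\textbf{Homothety.} The paper changes variable from $t$ to $x$, obtaining a metric $h_{b_2,b_3}$ that manifestly depends only on $3b_3-b_2$, and then rescales $x\mapsto 4x/(3b_3-b_2)$ to land on $h_{-1,1}$. Your argument via the flow invariant $b_2-3b_3$ (so that moving the base point $t=0$ leaves it fixed), the scaling $b_i\mapsto\mu b_i$, and the symmetry $(\omega,\rho)\mapsto(\omega,-\rho)$ achieves the same reduction; it is equivalent but phrased more invariantly.

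\textbf{Smooth extension.} You correctly identify the candidate completion $\SL(2,\CC)\times_{\SU(2)}\CC^2$ and the second-order collapse $p(t)\sim c(t-t_0)^2$, and then defer smoothness to the comparison with \cite{BryantSalamon}. The paper does exactly this as its primary argument (noting that Bryant--Salamon's metric is \emph{a priori} in the class, hence the unique homothety class extends), and only afterwards supplies an independent Eschenburg--Wang verification: showing the coefficient functions extend as smooth even functions of $t-t_0$ and computing the relevant Poincar\'e series of $\RR[V,S^2(V)]^{\SU(2)}$ to see that the only obstruction is $\lim_{t\to t_0}p(t)/(t-t_0)^2=1$. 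Your proposal is therefore complete as stated; if you want a self-contained proof, the Eschenburg--Wang computation is the missing ingredient.
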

\begin{proof}
We first consider the case $b_1\neq 0$. We argue by contradiction and assume that $(\SL(2,\CC)\times (a,b),g_t+dt^2)$ can be realized as the regular part of a Riemannian manifold $(M,g)$ with a proper isometric $\SL(2,\CC)$-action of cohomogeneity one and at least one singular orbit. Then $\lim_{t\rightarrow b^-} (e,t)$ or $\lim_{t\rightarrow a^+} (e,t)$ has to exist in $M$ and lie in a singular orbit and so $\lim_{t\rightarrow b^-} g(\overline{X},\overline{X}) (e,t)$ or $\lim_{t\rightarrow a^+} g(\overline{X},\overline{X}) (e,t)$ has to exist and has to be finite for all $X\in \mathfrak{sl}(2,\CC)$. But Equation (\ref{eq:metric}) and Lemma \ref{lem:hfsolutions} give
\begin{equation*}
g_t(e_1,e_1)=g(\overline{e_1},\overline{e_1})(e,t)=2^{\frac{2}{3}}\frac{ b_1^2+x(t)^2}{(-\lambda(b_1,3x(t)+b_2-3b_3,x(t)))^{\frac{1}{3}}}.
\end{equation*}
for all $t\in (a,b)$ and, as argued above, $\lambda(b_1,3x(t)+b_2-3b_3,x(t))$ tends to zero for $t$ tending to $a$ and $b$. Since the numerator is always non-zero, $g_t(e_1,e_1)$ tends to infinity at both boundary points of $(a,b)$, a contradiction.

Assume from now on that $b_1=0$. Since
$\lambda(0,3x+b_2-3b_3,x)=4x^3(3x+b_2-3b_3)$ has exactly two
  distinct zeroes: $0$ and $b_3-\frac{1}{3} b_2$ (which is non-zero
  since otherwise $\lambda(0,b_2,b_3)=4b_3^3 b_2=12 b_3^4\geq 0)$, we
  know from Lemma \ref{lem:hfsolutions} that 
$$\lim_{t\rightarrow a} x(t)=z_1\mbox{ and }\lim_{t\rightarrow b}
x(t)=z_2,$$
where $\{z_1,z_2\}=\{0,b_3-\frac{1}{3} b_2\}$, Equation (\ref{eq:metric}) yields
\begin{equation*}
g_t=\sum_{i=1}^3 \frac{x(t)}{(3b_3-b_2-3x(t))^{\frac{1}{3}}} e^i\otimes e^i+\sum_{j=4}^6 (3b_3-b_2-3x(t))^{\frac{2}{3}} e^j\otimes e^j.
\end{equation*}
Consequently, $g_t(e_1,e_1)$ tends to $\infty$ at one of the boundary points of $(a,b)$ and so $\SL(2,\CC)\times (a,b)$ cannot be realized as the regular part of a Riemannian manifold with a proper cohomogeneity-one action of $\SL(2,\CC)$ with two singular orbits.

As $\dot{x}=- \epsilon \left(x^3(3b_3-b_2-3x)\right)^{\frac{1}{6}}$ by
Equation (\ref{eq:flow}), $(\SL(2,\CC)\times (a,b),g_t+dt^2)$ is the
expression of the metric $ h_{b_2,b_3}$ through the change of variable
$t\mapsto x(t)$ in $\left(\SL(2,\CC)\times J_{b_2,b_3}, h_{b_2,b_3}\right)$ with
\begin{equation*}
h_{b_2,b_3}:=\sum_{i=1}^3 \frac{x}{(3b_3-b_2-3x)^{\frac{1}{3}}} e^i\otimes e^i+\sum_{j=4}^6 (3b_3-b_2-3x)^{\frac{2}{3}} e^j\otimes e^j+\frac{dx^2}{x (3b_3-b_2-3x)^{\frac{1}{3}}}
\end{equation*}
and $J_{b_2,b_3}:=\left(0,b_3-\frac{b_2}{3}\right)$ or $J_{b_2,b_3}:=\left(b_3-\frac{b_2}{3},0\right)$ depending on whether $b_3-\frac{b_2}{3}>0$ or not. $\left(\SL(2,\CC)\times J_{b_2,b_3},h_{b_2,b_3}\right)$ is homothetic to $\left(\SL(2,\CC)\times J_{-1,1}, h_{-1,1}\right)$ via the homothety $(A,x)\mapsto \left(A, \frac{4x}{3b_3-b_2}\right)$.

Hence, it suffices to consider $(\epsilon,b_2,b_3)=(1,-1,1)$ and to show that for these parameter values, $(\SL(2,\CC),g_t+dt^2)$ can be realized as the regular part of a, necessarily incomplete, Riemannian manifold $(M,g)$ with a proper cohomogeneity-one $\SL(2,\CC)$-action with exactly one singular orbit. In this case,
\begin{equation} \label{eq:xdot}
\dot{x}=-\sqrt{x}\left(4-3x\right)^{\tfrac{1}{6}},\quad x(0)=1.
\end{equation}
In particular, $x$ is strictly decreasing and $x$ tends to $0$ when
reaching the boundary point $b$. Moreover, we have $\lim_{t\to
  b^-}g_t(e_i,e_i)=0$ for $i=1,2,3$. The elements $e_1,e_2,e_3$ span
the subalgebra $\su(2) \subset \msl(2,\CC)$. 

Because the action of
$\SL(2,\CC)$ on $\SL(2,\CC)\times I=\SL(2,\CC)\times (a,b)$ is free,
this implies that the candidate for the partial completion at $b$ is
the manifold $M=\SL(2,\CC)\times_{\SU(2)} U$ with $U:=B_{\epsilon}(0)$
being the open ball of radius $\epsilon:=b-a$ in $V=\RR^4$ with
respect to the canonical inner product on $\RR^4$ and the
representation of $\SU(2)\cong S^3$ on $V=\RR^4\cong \HH$ being the
standard one.  \begin{rem} $M$ can be seen as the space of
    vectors of length  less than $b-a$ in the spin bundle over the
    hyperbolic 3-space $\SL(2,\CC)/\SU(2)$.\end{rem}
We may assume that $\SL(2,\CC)\times (a,b)$ is embedded into
$M$ via  $(A,t)\mapsto [(A,(b-t)v_1)]\in
  \SL(2,\CC)\times_{\SU(2)} U\subseteq \SL(2,\CC)\times_{\SU(2)} V$,
where $(v_1,\ldots,v_4)$ is the standard basis of $V= \RR^4$. $M$ is
equipped with the $\SL(2,\CC)$-cohomogeneity one action given by left
multiplication on the first factor; we will show that the Riemannian
metric $g_t + dt^2$ on the regular set $\SL(2,\CC)\times I$ extends to
a smooth $\SL(2,\CC)$-invariant Riemannian metric $g$ on $M$. 

This extension problem for cohomogeneity one actions (or more
generally for polar actions) has been considered by several authors,
see e.g.~\cite{EschenburgWang}, \cite{Mendes}, \cite{Wang},
\cite{Reidegeld}. Note that since we considered an arbitrary $\SU(2)$-invariant left-invariant initial value of the Hitchin flow, the Bryant-Salamon metric on the spin bundle of hyperbolic 3- space described in \cite[Section 3]{BryantSalamon} is included in our class of metrics. This shows that there exists a metric in this class that extends smoothly to M. It follows that our Riemannian metrics with $b_1 \neq 0$ extend smoothly to M and are homothetic to the Bryant-Salamon metric. On the final two pages of this paper we nevertheless verify explicitly, without using the result of \cite{BryantSalamon}, that the metric extends. 

We define $\mfp$ as the span of $e_4,e_5,e_6$ in $\msl(2,\CC)$ and note that this space is invariant under the adjoint action of $\su(2)$. We consider $U$ as embedded in $M$ via $v\mapsto [e,v]\in \SL(2,\CC)\times_{\SU(2)}U$; then the tangent bundle $TM$, restricted to $U$, is the trivial $\SU(2)$-bundle
\[
TM|_U = U\times (V\times \mfp),
\]
see \cite{EschenburgWang}, p.~112. An $\SL(2,\CC)$-invariant Riemannian metric on $M$ is thus the same as a smooth $\SU(2)$-equivariant map
\[
U\longrightarrow S^2((V\times \mfp)^*).
\]

We consider the curve $\gamma:(0,\epsilon)\rightarrow \SL(2,\CC)\times (a,b)\hookrightarrow M$, $\gamma(t):=(e,b-t)=[e,tv_1]$ and consider $(-\epsilon,\epsilon)$ as a subset of $M$ via $s\mapsto [e,sv_1]$. Then the fundamental vector fields of $e_i\in \su(2)$, $i=1,2,3$, restricted to $\gamma$, are given by $\frac{tv_{i+1}}{2}$ and the metric along $\gamma$ is given by 
\[
t\mapsto v^1\otimes v^1 +\frac{4\cdot x(b-t)}{t^2 (4-3 x(b-t))^{\frac{1}{3}}}\left(\sum_{i=2}^4 v^i\otimes v^i\right)+ (4-3 x(b-t))^{\frac{2}{3}}\left(\sum_{i=4}^6 e^i\otimes e^i\right).
\]
We note that the metric along $\gamma$ takes values in the $\SU(2)$-invariant subspace $S^2(V^*)\oplus S^2(\mfp^*)$, so we can consider the extension problem separately. First we want to show that the metric extends from $(0,\epsilon)$ to $(-\epsilon,\epsilon)$. As we want to obtain an invariant metric, this extension has to be equivariant under the subgroup of $\SU(2)$ which leaves invariant the line $\RR\cdot v_1\subset V$, which is $\ZZ_2=\{\pm \id\}\subset \SU(2)$. The group $\ZZ_2$ acts on $V$ by $v\mapsto -v$, and trivially on $\mfp$ (recall that $\SU(2)$ acts on $\mfp$ by the adjoint action). Hence, it acts trivially on $S^2(V^*)\oplus S^2(\mfp^*)$, but by $t\mapsto -t$ on $(-\epsilon,\epsilon)$; in other words, we have to show the following lemma:

\begin{lemma}
The coefficient functions of the metric along $\gamma$ can be extended to smooth even functions on $(-\epsilon,\epsilon)$.
\end{lemma}

\begin{proof} We have to show that all the odd derivatives of the coefficient functions vanish in the limit $t=0$. We first argue that the function $t\mapsto x(b-t)$ has this property.
For that we argue by induction, using Equation \eqref{eq:xdot}, that the odd derivatives of $x$ have the form
\begin{equation}\label{eq:oddderivativesexample}
x^{(2k+1)} = \sum_{i=0}^k c_{k,i} \cdot x^{\frac{2i+1}{2}} \cdot (4-3x)^{\alpha_{k,i}}
\end{equation}
for all $k\geq 0$, and that the even derivatives have the form
\begin{equation}\label{eq:evenderivativesexample}
x^{(2k)} = \sum_{i=0}^k d_{k,i}\cdot x^i\cdot (4-3x)^{\beta_{k,i}}
\end{equation}
for all $k\geq 1$, where the $c_{k,i}, d_{k,i}, \alpha_{k,i}$ and $\beta_{k,i}$ are rational constants.
To see this, we calculate that the derivative of a summand of the form $x^{\frac{2i+1}{2}} \cdot (4-3x)^{\alpha_{k,i}}$ in \eqref{eq:oddderivativesexample} reads
\begin{align*}
&\frac{2i+1}{2}\cdot x^{\frac{2i-1}{2}} \cdot \dot{x}\cdot (4-3x)^{\alpha_{k,i}} -3 \alpha_{k,i}\cdot x^{\frac{2i+1}{2}} (4-3x)^{\alpha_{k,i}-1} \dot{x} \\
&= -\frac{2i+1}{2} \cdot x^i\cdot (4-3x)^{\alpha_{k,i} + \frac{1}{6}} - 3\alpha_{k,i} \cdot x^{i+1} (4-3x)^{\alpha_{k,i}+\frac{5}{6}}
\end{align*}
which is the sum of two summands of the form that appear in \eqref{eq:evenderivativesexample}. Similarly, the derivative of a summand in \eqref{eq:evenderivativesexample} results in the sum of one (for $i=0$) or two summands of the form in \eqref{eq:oddderivativesexample}.

Because $\lim_{t\to b^-} x(t)=0$, Equation \eqref{eq:oddderivativesexample} implies that $t\mapsto x(b-t)$ extends to a smooth even function on $(-\epsilon,\epsilon)$, which is zero for $t=0$. Hence, $t\mapsto (4-3 x(b-t))^{\frac{2}{3}}$ (one of the coefficient functions) and $t\mapsto\tfrac{x(b-t)}{t^2}$ extend to smooth even functions on $(-\epsilon,\epsilon)$. But then also the other coefficient function $t\mapsto \tfrac{4 x(b-t)}{t^2 (4-3 x(b-t))^{\frac{1}{3}}}$ extends to a smooth even function on $(-\epsilon,\epsilon)$.
\end{proof} 

Now it is known that the problem to extend the smooth $\ZZ_2$-equivariant map $(-\epsilon,\epsilon)\to S^2(V^*)\oplus S^2(\mfp^*)$ to a smooth $\SU(2)$-equivariant map defined on $U$ is just the extension problem for the coefficients of the Taylor expansion at $0$, see \cite{EschenburgWang}, Lemma 1.1, or \cite{Mendes}, Chapter 3. 

Let us consider the $V$-part of the metric. As observed above, $\ZZ_2$ acts trivially on $S^2(V)$. Thus, the space of invariant polynomials $\RR[\RR,S^2(V)]^{\ZZ_2}$ is nothing but $\RR[t^2]\otimes S^2(V)$. All spaces of polynomials we consider carry the natural grading by degree of the polynomials. We recall that the Poincar\'e series of a graded vector space $V=\bigoplus_{k\geq 0} V_k$ such that each space $V_k$ is finite-dimensional, is the formal series $\sum_{k\geq 0} t^k \dim V_k$. Thus, the space $\RR[\RR,S^2(V)]^{\ZZ_2}$ has the Poincar\'e series 
\[
P_t(\RR[\RR,S^2(V)]^{\ZZ_2}) = 10+10t^2+10t^4+\cdots
\]

To compute the Poincar\'e series of $\RR[V,S^2(V)]^{\SU(2)}\subseteq \RR[\RR,S^2(V)]^{\ZZ_2}$, we complexify the situation: Denoting by $W_n$, $n=0,1,2,\ldots$, the irreducible complex $\SU(2)$-representation $S^n(\CC^2)$, we have $V^\CC = W_1\oplus W_1$; we thus have to find, for each degree $k$, the trivial summands in $S^k(W_1\oplus W_1)\otimes S^2(W_1\oplus W_1)$. Using the Clebsch-Gordon formula
\[
W_n\otimes W_m = W_{n+m}\oplus W_{n+m-2}\oplus \cdots\oplus W_{|n-m|},
\]
see \cite{FultonHarris}, Exercise 11.11, we compute
\begin{align*}
S^k(W_1\oplus W_1)\otimes S^2(W_1\oplus W_1) =& \left(\bigoplus_{l=0}^k W_l\otimes W_{k-l}\right)\otimes (3W_2\oplus W_0)\\
= & \left(\bigoplus_{l=0}^k W_k\oplus W_{k-2} \oplus\cdots\oplus W_{|k-2l|}\right)\otimes (3W_2\oplus W_0),
\end{align*}
from which we deduce that the desired Poincar\'e series is
\[
P_t(\RR[V,S^2(V)]^{\SU(2)}) = 1 + 10t^2 + 10 t^4 + \cdots
\]
This means that the only obstruction for extending the $V$-part of the metric is that $\lim_{t\to 0}  \frac{4x(b-t)}{t^2 (4-3 x(b-t))^{\frac{1}{3}}}=1$, which is easily checked using \eqref{eq:xdot}.

A smooth $\SU(2)$-equivariant extension of the $\mfp$-part of the metric to $U$ is given by $v\mapsto (4-3 x(b-|v|))^{\frac{2}{3}} \left(\sum_{i=4}^6 e^i\otimes e^i\right)$ as $t\mapsto (4-3 x(b-t))^{\frac{2}{3}}$ is a smooth even function and $\sum_{i=4}^6 e^i\otimes e^i$ is $\SU(2)$-invariant. This shows that the metric $g_t+dt^2$ extends smoothly from $\SL(2,\CC)\times (a,b)$ to $M$. 
\end{proof}
\begin{rem}
In fact, for $b_1=0$, not only the Riemannian metric extends from $\SL(2,\CC)\times (a,b)$ to $M$ but also the parallel $\G_2$-structure $\omega_t\wedge dt+\rho_t$. To see this note that, as the holonomy of $(M,g)$ is equal to $\G_2$, $M$ possesses a parallel $\G_2$-structure $\varphi$ which induces exactly the metric $g$. Now the space of $\G_2$-invariant three-forms on a seven-dimensional vector space is one-dimensional by \cite{Bryant}. So $\varphi$ restricted to $\SL(2,\CC)\times (a,b)$ equals $\omega_t\wedge dt+\rho_t$ up to a non-zero scalar multiple. Hence, a non-zero scalar multiple of $\varphi$ is the extension of $\omega_t\wedge dt+\rho_t$ to $M$.
\end{rem}

\end{document}